\newcommand{\ds}{\displaystyle}
\newcommand{\NN}{\mathbb N}
\newcommand{\CC}{\mathbb C}
\newcommand{\RR}{\mathbb R}
\newcommand{\ZZ}{\mathbb Z}
\newcommand{\EE}{\mathcal E}
\newcommand{\DD}{\mathcal D}
\newcommand{\SSS}{\mathcal S}
\newcommand{\ssum}{\mbox{$\sum_j\,$}}
\newtheorem{theorem}{Theorem}[section]
\newtheorem{proposition}[theorem]{Proposition}
\newtheorem{lemma}[theorem]{Lemma}
\newtheorem{corollary}[theorem]{Corollary}
\theoremstyle{remark}
\newtheorem{remark}[theorem]{Remark}
\theoremstyle{definition}
\newtheorem{definition}[theorem]{Definition}
\newtheorem{example}[theorem]{Example}
\numberwithin{equation}{section}
\newcommand{\beq}{\begin{eqnarray}}
\newcommand{\eeq}{\end{eqnarray}}
\newcommand{\beqs}{\begin{eqnarray*}}
\newcommand{\eeqs}{\end{eqnarray*}}
\newcommand{\Op}{\mathrm{Op}}
\author[S. Pilipovi\' c]{Stevan Pilipovi\' c}
\address{Department of Mathematics and Informatics,
University of Novi Sad, Trg Dositeja Obradovi\'{c}a 4, 21000 Novi Sad, Serbia}
\email{stevan.pilipovic@dmi.uns.ac.rs}
\author[B. Prangoski]{Bojan Prangoski}
\address{Department of Mathematics, Faculty of Mechanical
Engineering-Skopje, Karposh 2 b.b., 1000 Skopje, Macedonia}
\email{bprangoski@yahoo.com}
\author[J. Vindas]{Jasson Vindas}
\thanks{The work of J. Vindas was supported by Ghent University, through the BOF-grant 01N01014.}
\address{Department of Mathematics, Ghent University, Krijgslaan 281,
9000 Ghent, Belgium}
\email{jasson.vindas@UGent.be}
\title[Spectral asymptotics for infinite order $\Psi$DOs]{Spectral asymptotics for infinite order pseudo-differential operators}
\keywords{Weyl asymptotic formula, spectral asymptotics, infinite order pseudo-differential operators, hypoellipticity, heat parametrix, ultradistributions}
\subjclass[2010]{35P20, 35S05, 46F05, 47D03}
\begin{document}
\begin{abstract} We study spectral properties of a class of global infinite order pseudo-differential operators and obtain the asymptotic behaviour of the spectral counting functions of such operators. Unlike their finite order counterparts, their spectral asymptotics are not of power-log-type but of log-type. The ultradistributional setting of such operators of infinite order makes the theory more complex so that the standard finite order global Weyl calculus cannot be used in this context.
\end{abstract}
\maketitle

\section{Introduction}

In this article we study the spectral properties of global infinite order pseudo-diffe\-rential operators. Our operator classes are intrinsically related to the ultradistributional framework so that the bounds on the derivatives of the symbols are controlled by Gevrey type weight sequences.
Our aim is to establish Weyl asymptotic formulae for a large class of (hypoelliptic) $\Psi$DOs of infinite order.
 It is worth mentioning that the Weyl asymptotics for the operators that we investigate here are not of power-log-type as in the finite order (distributional) setting, but of log-type, which in turn yields that the eigenvalues of infinite order  $\Psi$DOs, with appropriate assumptions, are ``very sparse''. As a by-product of our analysis, we also obtain Weyl asymptotic formulae for a class of  finite order Shubin $\Psi$DOs with some conditions on the symbols that are not the ones usually discussed in the literature.

The spaces of symbols and corresponding pseudo-differential operators involved in this work were introduced by Prangoski  (see \cite{BojanP} for the symbolic calculus) and then extensively studied in several articles by himself and his coauthors; we refer to works of Cappiello \cite{c1,c2} for similar symbol classes related to SG-hyperbolic problems of finite order. The definition of these symbols classes is linked to two Gevrey type weight sequences $A_p$ and $M_p$, $p\in\NN$. The first one controls the smoothness, while the second one controls the growth at infinity of the symbols. These symbol classes are denoted by $\Gamma^{(M_p),\infty}_{A_p,\rho}$ and $\Gamma^{\{M_p\},\infty}_{A_p,\rho}$. The first one gives rise to operators acting continuously on Gelfand-Shilov spaces of Beurling type (i.e. of $(M_p)$-class) and the second  one on Gelfand-Shilov spaces of Roumieu type (of $\{M_p\}$-class); we will employ $\Gamma^{*,\infty}_{A_p,\rho}$ as a common notation for both cases. Since the symbols are allowed to grow sub-exponentially, i.e. ultrapolynomially, the corresponding $\Psi$DOs are of infinite order and they go beyond the classical Weyl-H\"ormander calculus.

The article is organised as follows. Section \ref{section preli}  gives some basic background material about the Gelfand-Shilov type spaces $\SSS^*(\RR^d)$ and $\SSS'^*(\RR^d)$. We collect and explain in Section \ref{sec_3}  some useful properties of the symbol classes $\Gamma^{*,\infty}_{A_p,\rho}$  and the corresponding global pseudo-differential operators. Further results related to the symbolic calculus that will be employed in the article are stated in  the Appendix (Section \ref{app}).

Section \ref{subrealisations}  is devoted to establishing  the semi-boundedness of the Weyl quantisation $a^{w}$ of a positive hypoelliptic infinite order symbol $a$. This will be achieved with the aid of results on Anti-Wick quantisation from \cite{PP}. This result is interesting by itself because hypoellipticity in this setting allows the symbols to approach $0$ sub-exponentially and thus generalises the familiar result for finite order operators. As a consequence, for hypoelliptic real-valued $a$ such that $|a(w)|\rightarrow \infty$ as $|w|\rightarrow\infty$, one obtains that the closure $\overline{A}$ of the unbounded operator $A$ on $L^2(\RR^d)$ generated by $a^w$ is self-adjoint and has a spectrum given by a sequence of eigenvalues
$\lambda_n,n\in\NN,$ tending to $\infty$ or $-\infty$, with eigenfunctions belonging to  $\SSS^*(\RR^d)$ and forming an orthonormal basis for $L^2(\RR^d)$.

We state in Section \ref{Section Weyl formulae, part I} our main results concerning Weyl asymptotic formulae and we postpone their proofs to Section \ref{proofWeylasymp}, after developing the necessary machinery. We assume there that the symbol $a$ satisfies elliptic type bounds with respect to a rather general comparison function $f$ that is positive, increasing, and has suitable growth order. Theorem \ref{Weylth1} gives the asymptotic behaviour of the spectral counting function $N(\lambda)$ for infinite order symbols, which corresponds to $f$ being of actual ultrapolynomial growth (and thus $f$ increases faster than any power function at $\infty$). Even more, our method yields new interesting results for  Shubin type $\Psi$DOs of finite order. Theorem \ref{Weylth2} deals with the case of finite order Shubin type hypoelliptic symbols that satisfy elliptic bounds but with certain growth conditions on $f$ that appear to be different from the ones treated in the literature (cf. \cite{NR,Shubin}). Theorem \ref{Weylth3} provides an $O$-bound for $N(\lambda)$ by requiring only knowledge on a lower bound for the symbol. We present there also some illustrative examples.

The heat kernel analysis needed for the proofs of the Weyl asymptotic formulae for the class of operators under consideration is given in Section \ref{Section heat parametrix}. We consider a real-valued hypoelliptic symbol $a$ in $\Gamma^{*,\infty}_{A_p,\rho}$ such that $a(w)/\ln|w|\rightarrow +\infty$ as $|w|\rightarrow\infty$. The main goal  is the analysis of the semigroup $T(t)f=\sum_{j=0}^{\infty} e^{-t\lambda_j}(f,\varphi_j)\varphi_j$, $f\in L^2(\RR^d)$, $t\geq0$, with infinitesimal generator $-\overline{A}$ (the closure of $-a^w$ in $L^2(\RR^d)$) where $\lambda_j$ and $\varphi_j$ are the eigenvalues and eigenfunctions of $\overline{A}$. The crucial result to be shown here is that $T(t)$, $t\geq 0$,  form a smooth family of operators continuously acting on $\SSS^*(\RR^d)$. The proofs of these facts are rather lengthy and we devote a whole subsection to them. It is important to stress that the classical approach does not work here (cf. Remark \ref{forthefiniteord}); one of the main reasons is the lack of Shubin-Sobolev spaces that fill in the gaps between the Gelfand-Shilov type spaces $\SSS^*(\RR^d)$ and $L^2(\RR^d)$), so we had to develop new techniques to overcome the problems. Once we have these properties of the semigroup $T(t)$, $t\geq 0$, we prove that it is equal to the heat parametrix of $a^w$ as constructed in \cite{PP1} modulo a smooth family of ultra-smoothing operators and use this to obtain the asymptotic formula
\beqs
\sum_{j=0}^{\infty} e^{-t\lambda_j}=\frac{1}{(2\pi)^d}\int_{\RR^{2d}}e^{-ta(x,\xi)}dxd\xi+ O\left(\int_{\RR^{2d}}\frac{e^{-\frac{t}{4}a(x,\xi)}}{\langle (x,\xi)\rangle^{2\rho}}dxd\xi\right),\quad t\rightarrow 0^+.
\eeqs
This key asymptotic formula is the starting point for the proofs of our main theorems from Section \ref{Section Weyl formulae, part I} concerning Weyl asymptotic formulae; such proofs are the content of Section \ref{proofWeylasymp}. The passage from asymptotics of the heat semigroup to Weyl formulae is accomplished using ideas from the theory of regular variation \cite{BGT,Korevaarbook} and Tauberian tools.

\section{Preliminaries}\label{section preli}

For $x\in \RR^d$ and
$\alpha\in\NN^d$, we will use the notation $\langle x\rangle
=(1+|x|^2)^{1/2}, D^{\alpha}= D_1^{\alpha_1}\ldots
D_d^{\alpha_d}$, where $D_j^
{\alpha_j}={i^{-\alpha_j}}\partial^{\alpha_j}/{\partial x_j}^{\alpha_j}$.
Following Komatsu \cite{Komatsu1}, we work with some of the standard conditions $(M.1)$, $(M.2)$, $(M.3)$, $(M.3)'$ and $(M.4)$ on  sequences of positive numbers $M_{p}$, $p\in\NN$, for which we always assume $M_0=1$. We only recall $(M.4):$
\\
\indent $(M.4)$ $M_p^2/p!^2\leq (M_{p-1}/(p-1)!)\cdot (M_{p+1}/(p+1)!)$, $p\in\ZZ_+$.\\
Note that the Gevrey sequence $M_p=p!^{s}$, $s>1$, satisfies all the conditions listed above. Given two weight sequences $M_p$ and $\tilde{M}_p$, the notation $M_p\subset \tilde{M}_p$ (resp. $M_p\prec\tilde{M}_p$) means that there are $C,L>0$ (resp. for every $L>0$ there is $C>0$) such that $M_p\leq CL^p\tilde{M}_p$, $\forall p\in \NN$. For a multi-index $\alpha\in\NN^d$, $M_{\alpha}$ stands for $M_{|\alpha|}$, $|\alpha|=\alpha_1+...+\alpha_d$. As usual (\cite[Section 3]{Komatsu1}), we set $m_p=M_p/M_{p-1}$, $p\in\ZZ_+$, and if $M_p$ satisfies $(M.1)$ and $M_p/C^p\rightarrow \infty$, for any $C>0$ (which obviously holds when $M_p$ satisfies $(M.3)'$), its associated function is defined by $M(\rho)=\sup  _{p\in\NN}\ln_+ \rho^{p}/M_{p}$, $\rho > 0$. It is a non-negative, continuous, monotonically increasing function, vanishes for sufficiently small $\rho>0$, and increases more rapidly than $\ln \rho^n$ as $\rho\rightarrow\infty$, for any $n\in\NN$. When $M_p=p!^{s}$, with $s>0$,  we have $M(\rho)\asymp \rho^{1/s}$.

For a regular compact set $K\subseteq \RR^d$ (i.e. $K=\overline{\mathrm{int}\, K}$) and $h>0$, $\EE^{M_p,h}(K)$ is the
Banach space (abbreviated as $(B)$-space) of all
$\varphi\in C^{\infty}(\mathrm{int}\,K)$ whose derivatives extend to continuous functions on $K$ and satisfy
$\sup_{\alpha\in\NN^d}\sup_{x\in
K}|D^{\alpha}\varphi(x)|/(h^{\alpha}M_{\alpha})<\infty$ and
$\DD^{M_p,h}_K$ denotes its subspace of all smooth functions
supported by $K$. For $U\subseteq \RR^d$, we define as locally convex spaces (abbreviated as l.c.s.)
$\EE^{(M_p)}(U),$
 $\EE^{\{M_p\}}(U),$
 $\DD^{(M_p)}(U),$
 $\DD^{\{M_p\}}(U)$ and their strong duals, the corresponding spaces of ultradistributions  of Beurling and Roumieu type, cf. \cite{Komatsu1,Komatsu2,Komatsu3}.\\
\indent We denote by $\mathfrak{R}$ the set of all positive sequences which monotonically increase to infinity. There is a natural order on $\mathfrak{R}$ defined by $(r_p)\leq (k_p)$ if $r_p\leq k_p$, $\forall p\in\ZZ_+$, and with it $(\mathfrak{R},\leq)$ becomes a directed set. For $(r_p)\in\mathfrak{R}$, consider the sequence $N_0=1$, $N_p=M_p\prod_{j=1}^{p}r_j$, $p\in\ZZ_+$. It is easy to check that this sequence satisfies $(M.1)$ and $(M.3)'$ when $M_p$ does so and its associated function will be denoted by $N_{r_p}(\rho)$, i.e. $N_{r_{p}}(\rho)=\sup_{p\in\NN} \ln_+ \rho^{p}/(M_p\prod_{j=1}^{p}r_j)$, $\rho > 0$. Note that for $(r_{p})\in\mathfrak{R}$ and $k > 0 $ there is $\rho _{0} > 0$ such that $N_{r_{p}} (\rho ) \leq M(k \rho )$, for $\rho > \rho _{0}$.\\
\indent A measurable function $f$ on $\RR^d$ is said to have
ultrapolynomial growth of class $(M_p)$ (resp. of class $\{M_p\}$)
if $\|e^{-M(h|\cdot|)}f\|_{L^{\infty}(\RR^d)}<\infty$ for some
$h>0$ (resp. for every $h>0$). We have the following equivalent description of continuous functions of ultrapolynomial growth of class $\{M_p\}$.

\begin{lemma}$($\cite[Lemma 2.1]{PP1}$)$
\label{lemulgr117}
Let $B\subseteq C(\RR^d)$. The following conditions are
equivalent: $(i)$ For every $h>0$ there exists $C>0$ such that $|f(x)|\leq Ce^{M(h|x|)}$, for all $x\in\RR^d$, $f\in B$;
$\;(ii)$ There exist $(r_p)\in\mathfrak{R}$ and $C>0$ such that $|f(x)|\leq Ce^{N_{r_p}(|x|)}$, for all $x\in\RR^d$, $f\in B$.
\end{lemma}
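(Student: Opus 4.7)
My plan is to prove the two implications separately. The direction (ii) $\Rightarrow$ (i) is immediate from the inequality already recorded just before the lemma: given $(r_p) \in \mathfrak{R}$ and any $k > 0$, there exists $\rho_0 > 0$ with $N_{r_p}(\rho) \leq M(k\rho)$ for $\rho > \rho_0$. Assuming (ii) with constant $C$, for any fixed $h > 0$ I would apply this with $k = h$ to get $|f(x)| \leq Ce^{N_{r_p}(|x|)} \leq Ce^{M(h|x|)}$ whenever $|x| > \rho_0(h)$, while for $|x| \leq \rho_0(h)$ the monotonicity of $N_{r_p}$ lets us bound its exponent uniformly, the resulting factor being absorbed into a new constant $C_h$.

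For the nontrivial direction (i) $\Rightarrow$ (ii), I would first collapse the family $B$ to a single envelope. Let $C_k$ denote the constant in (i) corresponding to $h = 1/k$, which may be taken non-decreasing in $k$, and set
\begin{equation*}
\Psi(\rho) := 1 + \sup_{f \in B,\ |y|\leq\rho}|f(y)|,
\end{equation*}
so that $\Psi(\rho) \leq (C_k + 1) e^{M(\rho/k)}$ for every $k \in \ZZ_+$. The task then reduces to producing a single sequence $(r_p) \in \mathfrak{R}$ and constant $C > 0$ with $\Psi(\rho) \leq C e^{N_{r_p}(\rho)}$ for all $\rho \geq 0$, which immediately yields (ii).

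The construction of $(r_p)$ is a Komatsu-style diagonal argument based on the ``sup'' descriptions
\begin{equation*}
e^{M(\rho/k)} = \max\bigl(1,\ \sup_{q\in\NN} \rho^q/(k^q M_q)\bigr),\qquad e^{N_{r_p}(\rho)} \geq \max\bigl(1,\ \rho^p/(M_p\prod\nolimits_{j\leq p} r_j)\bigr).
\end{equation*}
To match the $p$-th term on the right with the dominant $q$-th term on the left, one must let $k = k_p$ depend on $p$. I would recursively pick $k_p$ strictly increasing with $k_p/k_{p-1} \to \infty$ and with $C_{k_p}$ dominated by a suitable power of $k_p$---possible because for any fixed $k$ the quantity $k^p$ eventually outruns $C_k$ as $p \to \infty$---and then define $r_p$ by prescribing the partial products $\prod_{j\leq p} r_j$, adjusting so that $(r_p)$ is monotone and tends to infinity. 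For each $\rho$, applying the hypothesis with $k = k_{p^*}$, where $p^*$ is the index achieving the sup in $e^{N_{r_p}(\rho)}$, then yields the desired bound with a uniform constant. The hard part will be this simultaneous enforcement of $(r_p)\in\mathfrak{R}$ with the absorption of all $C_{k_p}$ into one overall $C$; the flexibility in choosing $k_p$ at each recursive step, combined with the $k^p$-vs.-$C_k$ comparison, is what makes the construction go through.
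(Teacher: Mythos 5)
Your direction (ii) $\Rightarrow$ (i) is fine. For (i) $\Rightarrow$ (ii) (note the paper itself offers no proof to compare with — it cites \cite[Lemma 2.1]{PP1}), your reduction to the monotone envelope $\Psi$ and the general plan of a recursive choice of parameters is indeed how the standard argument goes, but the core construction is exactly what you leave undone, and the constraints you impose on it are mutually inconsistent. You ask for $k_p$ strictly increasing with $k_p/k_{p-1}\to\infty$ (which forces $k_p\gtrsim 2^p$) and simultaneously for $C_{k_p}$ to be dominated by a suitable power of $k_p$; but the constants $C_k$ furnished by (i) may grow arbitrarily fast in $k$ (take $C_k=e^{e^k}$), and then even $C_{k_p}\le k_p^{cp}$ forces $k_p=O(\ln p)$, incompatible with $k_p/k_{p-1}\to\infty$. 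The observation you invoke — that for fixed $k$ the quantity $k^p$ eventually outruns $C_k$ as $p\to\infty$ — supports precisely the opposite regime: the parameter $k$ must increase \emph{slowly}, held constant over a long block of indices, and be raised only after the index $p$ is large enough that, say, $2^p\ge C_k$.

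The matching step is also not secured: applying (i) with $k=k_{p^*}$, where $p^*$ maximizes $\rho^p/(M_p\prod_{j\le p}r_j)$, does not close the estimate, because $e^{M(\rho/k_{p^*})}=\sup_q \rho^q/(k_{p^*}^{q}M_q)$ may be attained at some $q^*\neq p^*$; after using the maximality of $p^*$ you still need $\bigl(\prod_{j\le q^*}r_j\bigr)C_{k_{p^*}}\le C\,k_{p^*}^{q^*}$, an inequality coupling two maximizers you have not controlled, and with your prescription (partial products $\approx k_p$ with rapidly growing $k_p$) it can genuinely fail, since $\rho/k_{p^*}$, hence $q^*$, can stay small while $C_{k_{p^*}}$ is huge. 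The standard repair is to select the parameter as a function of $\rho$ rather than of $p^*$: choose thresholds $\rho_k\uparrow\infty$ recursively so that for $\rho\in[\rho_{k-1},\rho_k)$ the index $p$ realizing $M(\rho/(2k))$ satisfies $2^{p}\ge C_k$, and set $r_j:=k$ on the corresponding block of indices; then $\prod_{j\le p}r_j\le k^{p}\le (2k)^{p}/C_k$, hence $e^{N_{r_p}(\rho)}\ge \rho^{p}/(M_{p}\prod_{j\le p}r_j)\ge C_k e^{M(\rho/(2k))}\ge\Psi(\rho)$ on that range, with small $\rho$ absorbed into the constant and $(r_p)\in\mathfrak{R}$ because the blocks are finite and their values increase to infinity. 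So the easy half stands, but the heart of (i) $\Rightarrow$ (ii) is missing as written and needs to be reorganized along these lines.
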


\indent If $M_p$ satisfies $(M.1)$ and $(M.3)'$, for $m>0$ we
denote by $\SSS^{M_p,m}_{\infty}(\RR^d)$ the $(B)$-space of all
$\varphi\in C^{\infty}(\RR^d)$ for which the norm
 $\sup_{\alpha\in
\NN^d}m^{|\alpha|}\|e^{M(m|\cdot|)}D^{\alpha}\varphi\|_{L^{\infty}(\RR^d)}/M_{\alpha}$
is finite. The spaces of sub-exponentially decreasing
ultradifferentiable function of Beurling and Roumieu type are
defined as
 $$
\SSS^{(M_{p})}(\RR^d)=\lim_{\substack{\longleftarrow\\
m\rightarrow\infty}}\SSS^{M_{p},m}_{\infty}\left(\RR^d\right)\quad
\mbox{and}\quad
\SSS^{\{M_{p}\}}(\RR^d)=\lim_{\substack{\longrightarrow\\
m\rightarrow 0}}\SSS^{M_{p},m}_{\infty}\left(\RR^d\right),
$$
respectively. Their strong duals $\SSS'^{(M_{p})}(\RR^d)$ and $\SSS'^{\{M_{p}\}}(\RR^d)$ are the spaces of tempered ultradistributions of Beurling and Roumieu type, respectively. When $M_p=p!^{s}$, $s>1$, $\SSS^{\{M_p\}}(\RR^d)$ is just the Gelfand-Shilov space $\SSS^{s}_{s}(\RR^d)$ \cite{NR}. If $M_p$ satisfies $(M.2)$, the ultradifferential operators of class $*$ act continuously on $\SSS^*(\RR^d)$ and $\SSS'^*(\RR^d)$ (for the definition of ultradifferential operators see \cite{Komatsu1}). These spaces are nuclear and the Fourier transform is a topological isomorphism on them. We refer to \cite{PilipovicK,PilipovicU} for the topological properties of $\SSS^*(\RR^d)$ and $\SSS'^*(\RR^d)$. Here we recall that, when $M_p$ satisfies $(M.2)$, the space $\SSS^{\{M_p\}}(\RR^d)$ is topologically isomorphic to $\ds\lim_{\substack{\longleftarrow\\ (r_p)\in\mathfrak{R}}}\SSS^{M_p, (r_p)}_{\infty}(\RR^d)$, where the projective limit is taken with respect to the natural order on $\mathfrak{R}$ defined above and $\SSS^{M_p, (r_p)}_{\infty}(\RR^d)$ is the $(B)$-space of all $\varphi\in C^{\infty}(\RR^d)$ for which the norm $\sup_{\alpha\in \NN^d}\|e^{N_{r_p}(|\cdot|)}D^{\alpha}\varphi\|_{L^{\infty}(\RR^d)}/ (M_{\alpha}\prod_{j=1}^{|\alpha|}r_j)$ is finite.\\
\indent Next, let $E$ and $F$ be  l.c.s.;  $\mathcal{L}(E,F)$ stands
for the space of continuous linear mappings from $E$ to $F$; when
$E=F$, we write $\mathcal{L}(E)$. We employ the notation $\mathcal{L}_b(E,F)$ for the
space $\mathcal{L}(E,F)$ equipped with the topology of bounded
convergence and, similarly, $\mathcal{L}_p(E,F)$ and $\mathcal{L}_{\sigma}(E,F)$ stand for
$\mathcal{L}(E,F)$ equipped with the topologies of precompact and simple convergence, respectively. Furthermore, $E\hookrightarrow F$ means that $E$ is continuously and densely included in $F$. For $(a,b)\subseteq\RR$ and $0\leq k\leq
\infty$,  $ C^{k}((a,b);E)$ stands for the vector space of $k$ times continuously differentiable $E$-valued functions on $(a,b)$, while $ C^{k}([a,b);E)$ for the space of those on $[a,b)$, where the
derivatives at $a$ are to be understood as right derivatives; we
use analogous notations when considering functions over $(a,b]$ or
$[a,b]$.

\section{$\Psi$DOs of infinite order of Shubin type on $\SSS^*(\RR^d)$ and $\SSS'^*(\RR^d)$}\label{sec_3}
We discuss in this section properties of the classes of infinite order $\Psi$DOs that we shall consider in the article; see also the Appendix for other important facts about their  symbolic calculus. We refer to \cite{BojanP,CPP} and \cite[Sections 3 and 4]{PP1} for complete accounts.

\subsection{Symbol classes and symbolic calculus}\label{sub calculus}
Let $A_p$ and $M_p$ be two weight sequences of positive numbers such that $A_0=A_1=M_0=M_1=1$. We assume that $M_p$ satisfies $(M.1)$, $(M.2)$ and $(M.3)$, and that $A_p$ satisfies $(M.1)$, $(M.2)$, $(M.3)'$ and $(M.4)$. Of course, we may assume that the constants $c_0$ and $H$ appearing in $(M.2)$ are the same for both sequences $M_p$ and $A_p$. We assume that $A_p\subset M_p$. Let $\rho_0=\inf\{\rho\in\RR_+|\,A_p\subset M_p^{\rho}\}$; clearly $0<\rho_0\leq 1$. Throughout the rest of the article, $\rho$ is a fixed number satisfying $\rho_0\leq \rho\leq1$, if the infimum is reached, or, otherwise $\rho_0< \rho\leq1$. Clearly, we may also assume that $A_p\leq c_0 L^pM_p^{\rho}$, where $c_0\geq 1$ is the constant from $(M.2)$.\\
\indent For $h,m>0$, define (following \cite{BojanP})
$\Gamma_{A_p,\rho}^{M_p,\infty}(\RR^{2d};h,m)$ to be the
$(B)$-space of all $a\in  C^{\infty}(\RR^{2d})$ for which
the norm
\beqs
\sup_{\alpha,\beta\in\NN^d}\sup_{(x,\xi)\in\RR^{2d}}\frac{\left|D^{\alpha}_{\xi}D^{\beta}_x
a(x,\xi)\right| \langle
(x,\xi)\rangle^{\rho|\alpha|+\rho|\beta|}e^{-M(m|\xi|)}e^{-M(m|x|)}}
{h^{|\alpha|+|\beta|}A_{\alpha}A_{\beta}}.
\eeqs
is finite. As l.c.s., we define
$$
\Gamma_{A_p,\rho}^{(M_p),\infty}(\RR^{2d};m)=
\lim_{\substack{\longleftarrow\\h\rightarrow 0}}
\Gamma_{A_p,\rho}^{M_p,\infty}(\RR^{2d};h,m);
\quad
\Gamma_{A_p,\rho}^{(M_p),\infty}(\RR^{2d})
\lim_{\substack{\longrightarrow\\m\rightarrow\infty}}
\Gamma_{A_p,\rho}^{(M_p),\infty}(\RR^{2d};m);
$$
$$
\Gamma_{A_p,\rho}^{\{M_p\},\infty}(\RR^{2d};h)=
\lim_{\substack{\longleftarrow\\m\rightarrow 0}}
\Gamma_{A_p,\rho}^{M_p,\infty}(\RR^{2d};h,m);
\quad
\Gamma_{A_p,\rho}^{\{M_p\},\infty}(\RR^{2d})=
\lim_{\substack{\longrightarrow\\h\rightarrow\infty}}
\Gamma_{A_p,\rho}^{\{M_p\},\infty}(\RR^{2d};h).
$$
Then,
$\Gamma_{A_p,\rho}^{(M_p),\infty}(\RR^{2d};m)$ and
$\Gamma_{A_p,\rho}^{\{M_p\},\infty}(\RR^{2d};h)$ are $(F)$-spaces.
The spaces $\Gamma_{A_p,\rho}^{*,\infty}(\RR^{2d})$ are barrelled
and bornological.\\

\indent For $\tau\in\RR$ and
$a\in\Gamma_{A_p,\rho}^{*,\infty}(\RR^{2d})$, the
$\tau$-quantisation of $a$ is the operator $\Op_{\tau}(a)$, continuous on
 $\SSS^*(\RR^d)$
given by the  iterated integral:
\beqs
\left(\Op_{\tau}(a)u\right)(x)=\frac{1}{(2\pi)^d}\int_{\RR^d}\int_{\RR^d}e^{i(x-y)\xi}a((1-\tau)x+\tau
y,\xi) u(y)dyd\xi.
\eeqs

Let $t\geq0$. We denote $Q_t=\left\{(x,\xi)\in\RR^{2d}|\,\langle
x\rangle<t, \langle \xi\rangle<t\right\}$ and
$Q_t^c=\RR^{2d}\backslash Q_t$. If $0\leq t\leq 1$, then
$Q_t=\emptyset$ and $Q_t^c=\RR^{2d}$. Let $B\geq 0$ and $h,m>0$.
Following \cite{BojanP,PP1}, denote by $FS_{A_p,\rho}^{M_p,\infty}(\RR^{2d};B,h,m)$ the vector
space of all formal series $\sum_{j=0}^{\infty}a_j(x,\xi)$ such
that $a_j\in  C^{\infty}(\mathrm{int\,}Q^c_{Bm_j})$,
$D^{\alpha}_{\xi} D^{\beta}_x a_j(x,\xi)$ can be extended to a
continuous function on $Q^c_{Bm_j}$ for all $\alpha,\beta\in\NN^d$
and
\beqs
\sup_{j\in\NN}\sup_{\alpha,\beta}\sup_{(x,\xi)\in
Q_{Bm_j}^c}\frac{\left|D^{\alpha}_{\xi}D^{\beta}_x
a_j(x,\xi)\right| \langle
(x,\xi)\rangle^{\rho|\alpha|+\rho|\beta|+2j\rho}e^{-M(m|\xi|)}e^{-M(m|x|)}}
{h^{|\alpha|+|\beta|+2j}A_{\alpha}A_{\beta}A_jA_j}<\infty.
\eeqs
In the above, we use the convention $m_0=0$ and hence,
$Q^c_{Bm_0}=\RR^{2d}$. With this norm,
$FS_{A_p,\rho}^{M_p,\infty}\left(\RR^{2d};B,h,m\right)$ becomes a
$(B)$-space. As l.c.s., we define
\beqs
FS_{A_p,\rho}^{(M_p),\infty}(\RR^{2d};B,m)&=&\lim_{\substack{\longleftarrow\\h\rightarrow
0}}
FS_{A_p,\rho}^{M_p,\infty}(\RR^{2d};B,h,m),\\
FS_{A_p,\rho}^{(M_p),\infty}(\RR^{2d};B)&=&
\lim_{\substack{\longrightarrow\\m\rightarrow\infty}}
FS_{A_p,\rho}^{(M_p),\infty}(\RR^{2d};B,m),\\
FS_{A_p,\rho}^{\{M_p\},\infty}(\RR^{2d};B,h)&=&
\lim_{\substack{\longleftarrow\\m\rightarrow 0}}
FS_{A_p,\rho}^{M_p,\infty}(\RR^{2d};B,h,m),\\
FS_{A_p,\rho}^{\{M_p\},\infty}(\RR^{2d};B)&=&\lim_{\substack{\longrightarrow\\h\rightarrow
\infty}} FS_{A_p,\rho}^{\{M_p\},\infty}(\RR^{2d};B,h).
\eeqs
Then, the spaces $FS_{A_p,\rho}^{(M_p),\infty}(\RR^{2d};B,m)$ and $FS_{A_p,\rho}^{\{M_p\},\infty}(\RR^{2d};B,h)$ are $(F)$-spaces and the space $FS_{A_p,\rho}^{*,\infty}(\RR^{2d};B)$ is barrelled and bornological. The inclusion mapping $\Gamma_{A_p,\rho}^{*,\infty}(\RR^{2d})\rightarrow$ $FS_{A_p,\rho}^{*,\infty}(\RR^{2d};B)$,
defined as $a\mapsto\sum_{j\in\NN}a_j$, where $a_0=a$ and $a_j=0$, $j\geq 1$, is continuous. We call this inclusion the canonical one. For $B_1\leq B_2$, the mapping $\sum_j p_j\mapsto \sum_j p_{j|_{Q_{B_2m_j}^c}}$, $FS_{A_p,\rho}^{*,\infty}(\RR^{2d};B_1)\rightarrow FS_{A_p,\rho}^{*,\infty}(\RR^{2d};B_2)$ is continuous. We also call this mapping canonical.\\
\indent Let $\ds FS_{A_p,\rho}^{*,\infty}(\RR^{2d})=\lim_{\substack{\longrightarrow\\ B\rightarrow\infty}}FS_{A_p,\rho}^{*,\infty}(\RR^{2d};B)$, where the inductive limit is taken in an algebraic sense and the linking mappings are the canonical ones described above. Clearly, $FS_{A_p,\rho}^{*,\infty}(\RR^{2d})$ is non-trivial.\\
\indent If $\sum_j a_j\in FS_{A_p,\rho}^{*,\infty}(\RR^{2d};B)$
and $n\in\NN$, $(\sum_j a_j)_n$ will just mean the function
$a_n\in C^{\infty}(Q_{Bm_n}^c)$, while
$(\sum_j a_j)_{<n}$ denotes the function $\sum_{j=0}^{n-1}
a_j\in C^{\infty}(Q_{Bm_{n-1}}^c)$. Furthermore,
$\mathbf{1}$ denotes the element $\sum_j a_j\in
FS_{A_p,\rho}^{*,\infty}(\RR^{2d};B)$ given by $a_0(x,\xi)=1$ and
$a_j(x,\xi)=0$, $j\in\ZZ_+$.

Recall, \cite[Definition 3]{BojanP} that two sums, $\sum_{j\in\NN}a_j,\,\sum_{j\in\NN}b_j\in
FS_{A_p,\rho}^{*,\infty}(\RR^{2d})$, are said to be equivalent, in
notation $\sum_{j\in\NN}a_j\sim\sum_{j\in\NN}b_j$, if there exist
$m>0$ and $B>0$ (resp. there exist $h>0$ and $B>0$), such that for
every $h>0$ (resp. for every $m>0$),
\beqs
\sup_{n\in\ZZ_+}\sup_{\alpha,\beta}\sup_{(x,\xi)\in
Q_{Bm_n}^c}\frac{\left|D^{\alpha}_{\xi}D^{\beta}_x
\sum_{j<n}\left(a_j(x,\xi)-b_j(x,\xi)\right)\right| \langle
(x,\xi)\rangle^{\rho|\alpha|+\rho|\beta|+2n\rho}}
{h^{|\alpha|+|\beta|+2n}A_{\alpha}A_{\beta}A_nA_n e^{M(m|\xi|)}e^{M(m|x|)}}<\infty.
\eeqs

\subsection{Subordination}
In the sequel, we will often use the notation
$w=(x,\xi)\in\RR^{2d}$.\\
\indent Let $\Lambda$ be an index set and $\{f_{\lambda}|\, \lambda\in\Lambda\}$ be a set of positive continuous functions on $\RR^{2d}$ each with ultrapolynomial growth of class $*$. We say that a set $U^{(\Lambda)}=\left\{\sum_j a^{(\lambda)}_j\big|\, \lambda\in\Lambda\right\}\subseteq FS_{A_p,\rho}^{*,\infty}(\RR^{2d};B')$ is subordinated to $\{f_{\lambda}|\, \lambda\in\Lambda\}$ in $FS_{A_p,\rho}^{*,\infty}(\RR^{2d})$, in notation $U^{(\Lambda)}\precsim \{f_{\lambda}|\, \lambda\in\Lambda\}$, if the following estimate holds: there exists $B\geq B'$ such that for every $h>0$ there
exists $C>0$ (resp. there exist $h,C>0$) such that
\beqs
\sup_{\lambda\in\Lambda}\sup_{j\in\NN}\sup_{\alpha\in\NN^{2d}}\sup_{w\in
Q_{Bm_j}^c}\frac{\left|D^{\alpha}_w
a^{(\lambda)}_j(w)\right|\langle
w\rangle^{\rho(|\alpha|+2j)}}{h^{|\alpha|+2j}A_{|\alpha|+2j}f_{\lambda}(w)}\leq
C.
\eeqs
Whenever we want to emphasise that the estimate is valid for a particular $B\geq B'$, we write $U^{(\Lambda)}\precsim \{f_{\lambda}|\, \lambda\in\Lambda\}$ in $FS_{A_p,\rho}^{*,\infty}(\RR^{2d};B)$.
When $f_{\lambda}=f$, $\forall \lambda\in\Lambda$, we abbreviate the notation and simply write $U\precsim f$, and then say that $U$ is subordinated to $f$. Clearly, for $U\subseteq FS_{A_p,\rho}^{*,\infty}(\RR^{2d};B_1)$ such that $U\precsim f$, there exists $B\geq B_1$ such that the image of $U$ under the canonical mapping $FS_{A_p,\rho}^{*,\infty}(\RR^{2d};B_1)\rightarrow FS_{A_p,\rho}^{*,\infty}(\RR^{2d};B)$ is a bounded subset of $FS_{A_p,\rho}^{(M_p),\infty}(\RR^{2d};B,m)$ for some $m>0$ (resp. a bounded subset of $FS_{A_p,\rho}^{\{M_p\},\infty}(\RR^{2d};B,h)$ for some $h>0$). For such $U$,
 we say that a bounded set $V$ in $\Gamma_{A_p,\rho}^{(M_p),\infty}(\RR^{2d};m)$ for some $m>0$ (resp. in $\Gamma_{A_p,\rho}^{\{M_p\},\infty}(\RR^{2d};h)$ for some $h>0$) is subordinated to $U$ under $f$, in notations $V\precsim_f U$, if there exists a surjective mapping $\Sigma:U\rightarrow V$ such that the following estimate holds: there exists $B\geq B_1$ such that for every $h>0$ there
exists $C>0$ (resp. there exist $h,C>0$) such that for all $\sum_j
a_j\in U$ and the corresponding $\Sigma(\sum_j a_j)=a\in V$
\beqs
\sup_{n\in\ZZ_+}\sup_{\alpha\in\NN^{2d}}\sup_{w\in
Q_{Bm_n}^c}\frac{\left|D^{\alpha}_w\left(a(w)-
\sum_{j<n}a_j(w)\right)\right|\langle
w\rangle^{\rho(|\alpha|+2n)}}{h^{|\alpha|+2n}A_{|\alpha|+2n}f(w)}\leq
C.
\eeqs
Again, when we want to emphasise the particular $B$ for which this holds, we write $V\precsim_f U$ in $FS_{A_p,\rho}^{*,\infty}(\RR^{2d};B)$.
If $V\precsim_f U$ and if
we denote by $\tilde{V}$ the image of $V$ under the canonical
inclusion $\Gamma_{A_p,\rho}^{*,\infty}(\RR^{2d})\rightarrow
FS_{A_p,\rho}^{*,\infty}(\RR^{2d};0)$, $a\mapsto
a+\sum_{j\in\ZZ_+}0$, then by specialising the above estimate for
$n=1$ together with the boundedness of $V$ in $\Gamma_{A_p,\rho}^{(M_p),\infty}(\RR^{2d};m)$ for some $m>0$
(resp. in $\Gamma_{A_p,\rho}^{\{M_p\},\infty}(\RR^{2d};h)$ for
some $h>0$) and the continuity and positivity of $f$, we derive that
$\tilde{V}\precsim f$ in $FS_{A_p,\rho}^{*,\infty}(\RR^{2d};0)$.
In such a case, we slightly abuse notation and write $V\precsim f$.
This estimate also implies $\Sigma(\sum_j a_j)\sim\sum_j a_j$. To
see that given such an $U\subseteq
FS_{A_p,\rho}^{*,\infty}(\RR^{2d};B)$ there always exists
$V\precsim_f U$, we can proceed as follows. Let
$\psi\in\DD^{(A_p)}(\RR^{d})$ in the $(M_p)$ case and
$\psi\in\DD^{\{A_p\}}(\RR^{d})$ in the $\{M_p\}$ case
respectively, such that $0\leq \psi\leq 1$, $\psi(\xi)=1$ when
$\langle\xi\rangle\leq 2$ and $\psi(\xi)=0$ when
$\langle\xi\rangle\geq 3$. Set $\chi(x,\xi)=\psi(x)\psi(\xi)$,
$\chi_{n,R}(w)=\chi(w/(Rm_n))$ for $n\in\ZZ_+$ and $R>0$ and put
$\chi_{0,R}(w)=0$. Given $U\subseteq
FS_{A_p,\rho}^{*,\infty}(\RR^{2d};B)$ as above, for $\sum_j a_j\in
U$ denote $R(\sum_j a_j)(w)= \sum_{j=0}^{\infty}
(1-\chi_{j,R}(w))a_j(w)$. If $R> B$, this is a well defined smooth function on $\RR^{2d}$, since the series is locally finite.

\begin{proposition}\label{subordinate}$($\cite[Proposition 3.3]{PP1}$)$
Let $U=\left\{\sum_j a^{(\lambda)}_j\big|\, \lambda\in\Lambda\right\}$ be a subset of $FS_{A_p,\rho}^{*,\infty}(\RR^{2d};B')$ that is subordinated to $\{f_{\lambda}|\, \lambda\in\Lambda\}$ in $FS_{A_p,\rho}^{*,\infty}(\RR^{2d})$. There exists $R_0> B'$ such that for each $R\geq R_0$, $U_R=\left\{R(\sum_j a^{(\lambda)}_j)\big|\, \lambda\in\Lambda\right\}\subseteq \Gamma_{A_p,\rho}^{*,\infty}(\RR^{2d})$ and the following estimate holds: there exists $B=B(R)\geq B'$ such that for every $h>0$
there exists $C>0$ (resp. there exist $h,C>0$) such that
\beqs
\sup_{\lambda\in\Lambda}\sup_{n\in\ZZ_+}\sup_{\alpha\in\NN^{2d}}\sup_{w\in
Q_{Bm_n}^c}\frac{\left|D^{\alpha}_w\left(R(\sum_ja^{(\lambda)}_j)(w)-
\sum_{j<n}a^{(\lambda)}_j(w)\right)\right|\langle
w\rangle^{\rho(|\alpha|+2n)}}{h^{|\alpha|+2n}A_{|\alpha|+2n}f_{\lambda}(w)}\leq
C.
\eeqs
If in addition $f_{\lambda}=f$, $\forall
\lambda\in\Lambda$, then $U_R$ is bounded in
$\Gamma_{A_p,\rho}^{(M_p),\infty}(\RR^{2d};m)$ for some $m>0$
(resp. bounded in $\Gamma_{A_p,\rho}^{\{M_p\},\infty}(\RR^{2d};h)$
for some $h>0$) and hence $U_R\precsim_f U$.
\end{proposition}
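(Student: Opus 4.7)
The plan is to show that the Borel-type resummation $R(\sum_j a_j^{(\lambda)})=\sum_j(1-\chi_{j,R})a_j^{(\lambda)}$ absorbs the singular regions $Q_{B'm_j}$ and produces a genuine symbol whose asymptotic expansion is controlled by the subordination hypothesis. The key geometric observations are: on the support of $1-\chi_{j,R}$ one has $\langle w\rangle \geq c\,Rm_j$; on the support of $D^\beta \chi_{j,R}$ with $|\beta|\geq 1$ one has $\langle w\rangle\asymp Rm_j$ together with the pointwise bound $|D^\beta\chi_{j,R}(w)|\leq c_0 h_0^{|\beta|}A_\beta/(Rm_j)^{|\beta|}$ (using $\chi\in\DD^{(A_p)}$ in the Beurling case and $\chi\in\DD^{\{A_p\}}$ in the Roumieu case). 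Fixing $R_0>B'$ and taking $R\geq R_0$, since $m_j\nearrow\infty$ one has, for each fixed $w$, $\chi_{j,R}(w)=1$ for all $j$ large, so the sum defining $R(\sum_j a_j^{(\lambda)})$ is locally finite and yields a $C^\infty$ function on $\RR^{2d}$.

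For the asymptotic estimate I would split
\[
R\bigl({\textstyle\sum_j a_j^{(\lambda)}}\bigr)(w)-\sum_{j<n}a_j^{(\lambda)}(w)=\sum_{j\geq n}(1-\chi_{j,R}(w))\,a_j^{(\lambda)}(w)-\sum_{j<n}\chi_{j,R}(w)\,a_j^{(\lambda)}(w),
\]
differentiate using Leibniz, and estimate the two sums separately. For $j\geq n$, the main term $(1-\chi_{j,R})D^\alpha a_j^{(\lambda)}$ is controlled by the subordination bound $|D^\alpha a_j^{(\lambda)}(w)|\leq Ch^{|\alpha|+2j}A_{|\alpha|+2j}f_\lambda(w)\langle w\rangle^{-\rho(|\alpha|+2j)}$; the extra decay $\langle w\rangle^{-2\rho(j-n)}$ is converted, via the support constraint $\langle w\rangle\gtrsim Rm_j$, into a geometric factor that sums once $R$ is taken large compared with $h$. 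The Leibniz contributions involving $D^\gamma\chi_{j,R}$, $|\gamma|\geq 1$, are similar: $(Rm_j)^{-|\gamma|}h_0^{|\gamma|}A_\gamma$ combines with $h^{|\alpha-\gamma|+2j}A_{|\alpha-\gamma|+2j}$ to yield an estimate of the same form. The finite sum $\sum_{j<n}\chi_{j,R}a_j^{(\lambda)}$ lives on $\{\langle w\rangle\leq CRm_{n-1}\}$, where the required factor $\langle w\rangle^{-\rho(|\alpha|+2n)}$ is produced directly from that inequality.

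The main bookkeeping obstacle is collecting the various $A_p$ factors (products $A_\gamma A_{|\alpha-\gamma|+2j}$ from Leibniz, inverse powers of $m_j$, and geometric resummations in $j\geq n$) into a single factor $h_1^{|\alpha|+2n}A_{|\alpha|+2n}$, uniformly in $\lambda$. This is achieved using $(M.2)$ and $(M.4)$ for $A_p$, at the price of replacing $h$ by $h_1$ which can be made arbitrarily small in the Beurling case and exists for some value in the Roumieu case; the constant $R_0$ must be taken correspondingly large. Once this central estimate is in place, specialising $n=1$ and combining with the subordination bound at $j=0$ yields $|D^\alpha(R(\sum_j a_j^{(\lambda)}))(w)|\leq Ch_2^{|\alpha|}A_\alpha f_\lambda(w)$ uniformly in $\lambda\in\Lambda$. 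When $f_\lambda\equiv f$ has ultrapolynomial growth of class $*$, this exhibits $U_R$ as a bounded subset of $\Gamma_{A_p,\rho}^{(M_p),\infty}(\RR^{2d};m)$ for some $m>0$ (resp.\ of $\Gamma_{A_p,\rho}^{\{M_p\},\infty}(\RR^{2d};h)$ for some $h>0$), and the estimate above then reads $U_R\precsim_f U$.
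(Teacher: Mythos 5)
Your treatment of the tail $\sum_{j\geq n}(1-\chi_{j,R})a_j^{(\lambda)}$ is the standard argument and is fine in outline (the decay $\langle w\rangle^{-2\rho(j-n)}\leq (2Rm_j)^{-2\rho(j-n)}$ on $\mathrm{supp}\,(1-\chi_{j,R})$, combined with $(M.2)$ and $A_p\leq c_0L^pM_p^{\rho}$, gives a geometric series once $R^{\rho}$ dominates the relevant $h$). The genuine gap is in the terms $\chi_{j,R}a_j^{(\lambda)}$ with $j<n$, and, relatedly, in the fact that you never choose the constant $B=B(R)$ that the statement explicitly provides. You claim the factor $\langle w\rangle^{-\rho(|\alpha|+2n)}$ is ``produced directly'' from $\langle w\rangle\leq CRm_{n-1}$ on $\mathrm{supp}\,\chi_{j,R}$, but producing it this way costs a factor $\langle w\rangle^{2\rho(n-j)}\leq (CRm_j)^{2\rho(n-j)}$ which must then be absorbed into $h_1^{|\alpha|+2n}A_{|\alpha|+2n}$; the only gains available are the ratio $A_{|\alpha|+2n}/A_{|\alpha|+2j}$ and the choice of $h$ in the hypothesis. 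Since $A_p\subset M_p^{\rho}$ makes $A_p$ the \emph{smaller} sequence (e.g.\ $A_p=p!^{\nu}$, $M_p=p!^{l}$, $\nu\leq\rho l$), one has $m_j^{2\rho(n-j)}$ far larger than $A_{|\alpha|+2n}/A_{|\alpha|+2j}$, and for indices $j<n$ with $m_j\asymp m_n$ (which do meet $Q^c_{Bm_n}$ when $B\leq 3R$) the discrepancy is of order $\exp(c\,n\ln n)$, which no choice of $h$ (even exploiting the ``for every $h$'' form of the Beurling hypothesis) can compensate. Indeed, for $B$ too small relative to $R$ the asserted estimate on $Q^c_{Bm_n}$ is simply false in general, so this step cannot be repaired by better bookkeeping.

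The missing idea is that the freedom to take $B=B(R)\geq B'$ large is precisely what disposes of these terms: choose $B(R)>3R$, say. Then for $w\in Q^c_{B(R)m_n}$ and $j<n$ one has $\max(\langle x\rangle,\langle\xi\rangle)\geq B(R)m_n>3Rm_j$, hence $w\notin\mathrm{supp}\,\chi_{j,R}$, so $\sum_{j<n}\chi_{j,R}a_j^{(\lambda)}$ vanishes identically on the region where the estimate is claimed, and only the tail remains. Two further points should be explicit rather than implicit: $R_0$ must dominate not only $B'$ but (half of) the constant $B$ from the subordination hypothesis, both so that $(1-\chi_{j,R})a_j^{(\lambda)}$ is defined (extended by zero on $Q_{B'm_j}$) and so that $\mathrm{supp}\,(1-\chi_{j,R})\subseteq Q^c_{Bm_j}$, the only set on which the hypothesis controls $a_j^{(\lambda)}$; and in the Roumieu case the largeness of $R_0$ relative to the given $h$ is what makes the geometric series converge, whereas in the Beurling case one instead shrinks $h$ relative to the target $h_1$ after fixing $R$. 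With these corrections your outline coincides with the argument of \cite[Proposition 3.3]{PP1}, which is where the paper's proof actually resides (the paper only cites it).
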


We say that this $U_{R}$ is canonically obtained from $U$ by
$\{\chi_{n,R}\}_{n\in\NN}$. Of course, here the mapping
$\Sigma: U\rightarrow U_R$ is just $\sum_j a_j\mapsto R(\sum_j
a_j)$.

\begin{proposition}\label{eqsse}$($\cite[Proposition 3.4]{PP1}$)$
Let $V$ be a bounded subset of
$\Gamma_{A_p,\rho}^{(M_p),\infty}(\RR^{2d};\tilde{m})$ for some
$\tilde{m}>0$ (resp. of
$\Gamma_{A_p,\rho}^{\{M_p\},\infty}(\RR^{2d};\tilde{h})$ for some
$\tilde{h}>0$). Assume that there exist $B,m>0$ such that for
every $h>0$ there exists $C>0$ (resp. there exist $B,h>0$ such
that for every $m>0$ there exists $C>0$) such that
\beqs
\sup_{a\in V}\sup_{n\in\ZZ_+}\sup_{\alpha\in\NN^{2d}}\sup_{w\in
Q_{Bm_n}^c}\frac{\left|D^{\alpha}_w a(w)\right|\langle
w\rangle^{\rho(|\alpha|+2n)}}{h^{|\alpha|+2n}A_{|\alpha|+2n}e^{M(m|w|)}}\leq
C.
\eeqs
Then, $\{\mathrm{Op}_{\tau}(a)|\,
a\in U\}$ is an equicontinuous subset of
$\mathcal{L}(\SSS'^*(\RR^d),\SSS^*(\RR^d))$ for each $\tau\in\RR$.
\end{proposition}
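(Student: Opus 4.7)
My plan has three steps: first, convert the formal-sum style decay in the hypothesis into pointwise sub-exponential decay of $a$ and all its derivatives, uniformly in $a\in V$; second, recognise the Schwartz kernels $K_a$ of $\Op_\tau(a)$ as a bounded family in $\SSS^*(\RR^{2d})$; and third, apply the Schwartz kernel theorem for the nuclear spaces $\SSS^*(\RR^d)$ to conclude equicontinuity in $\mathcal{L}(\SSS'^*(\RR^d),\SSS^*(\RR^d))$.

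For the first step, given $w\in\RR^{2d}$ with $\langle w\rangle\geq Bm_1$, let $n(w)$ be the largest integer satisfying $\langle w\rangle\geq Bm_{n(w)}$; by $(M.3)'$ for $A_p$, $n(w)\to\infty$ as $|w|\to\infty$. Applying the hypothesis at $n=n(w)$ and invoking iterated $(M.2)$ for $A_p$ to split $A_{|\alpha|+2n}\leq c_0^2H^{|\alpha|+4n}A_\alpha A_n^2$ gives
\begin{equation*}
|D^\alpha_w a(w)|\leq C(hH)^{|\alpha|}A_\alpha \langle w\rangle^{-\rho|\alpha|}e^{M(m|w|)}\left(\frac{hH^2 A_{n(w)}}{\langle w\rangle^{\rho n(w)}}\right)^2.
\end{equation*}
Using the elementary $M_n\leq m_n^n$ (from $(M.1)$) together with $A_n\leq c_0 L^nM_n^\rho$ (the inclusion $A_p\subset M_p^\rho$) and $\langle w\rangle\geq Bm_{n(w)}$, the parenthesis is majorised by $c_0(hH^2L/B^\rho)^{n(w)}$. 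In the Beurling case $h$ can be chosen arbitrarily small; since $n(w)$ grows (through the inverse of $m_n$) at the rate dictated by the associated function $M$, tuning $h$ makes $(hH^2L/B^\rho)^{n(w)}e^{M(m|w|)}$ decay like $e^{-M(h'|w|)}$ for any prescribed $h'>0$. The Roumieu case is symmetric, with $m$ now free, producing the same bound for some $h'>0$. In sum, uniformly in $a\in V$,
\begin{equation*}
|D^\alpha_w a(w)|\leq C'{h'}^{|\alpha|}A_\alpha e^{-M(h'|w|)},\qquad \alpha\in\NN^{2d},\ w\in\RR^{2d},
\end{equation*}
with the quantifier order on $(h',C')$ matching the ambient Beurling or Roumieu case.

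The Schwartz kernel of $\Op_\tau(a)$ is
\begin{equation*}
K_a(x,y)=\frac{1}{(2\pi)^d}\int_{\RR^d}e^{i(x-y)\xi}a((1-\tau)x+\tau y,\xi)\,d\xi.
\end{equation*}
Integration by parts in $\xi$ substitutes factors of $\langle y-x\rangle^{-N}$ for $\xi$-derivatives of $a$, while differentiating $K_a$ in $(x,y)$ produces first-argument derivatives of $a$ and extra $\xi$-factors from the phase. The estimate of the first step simultaneously controls all such terms and yields that $\{K_a:a\in V\}$ is a bounded subset of $\SSS^*(\RR^{2d})$. Since $\SSS^*(\RR^d)$ is nuclear, the Schwartz kernel theorem identifies $\SSS^*(\RR^{2d})\simeq\mathcal{L}_b(\SSS'^*(\RR^d),\SSS^*(\RR^d))$ topologically; under this identification bounded families correspond exactly to equicontinuous families, which gives the stated equicontinuity.

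The principal obstacle is the first step: one must optimally choose $n=n(w)$ and combine the polynomial gain $\langle w\rangle^{-2\rho n}$ with the sequence growth $A_n^2$ through the inclusion $A_p\subset M_p^\rho$ and the relation between $m_n$ and the associated function $M$, so as to produce sub-exponential decay of class $*$ that strictly dominates the factor $e^{M(m|w|)}$, while respecting the differing quantifier orders of the Beurling and Roumieu variants.
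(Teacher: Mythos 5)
Your overall route (upgrade the hypothesis to uniform sub-exponential decay of all derivatives of $a$, conclude that the kernels form a bounded subset of $\SSS^*(\RR^{2d})$, then invoke the kernel isomorphism $\SSS^*(\RR^{2d})\simeq\mathcal{L}_b(\SSS'^*(\RR^d),\SSS^*(\RR^d))$ and barrelledness of $\SSS'^*(\RR^d)$) is the natural one and is the mechanism this circle of papers uses; steps two and three are unproblematic. The genuine gap is in your first step, and it is not cosmetic. In the Roumieu case the quantifiers are \emph{not} symmetric to the Beurling case in the way you use them: there $B$ and $h$ are \emph{fixed} by the hypothesis and only $m$ is free, so your majorant $c_0(hH^2L/B^{\rho})^{n(w)}$ need not be small at all --- if $hH^2L>B^{\rho}$ it diverges as $n(w)\to\infty$, and making $m$ small does nothing to repair this, since $m$ enters only through $e^{M(m|w|)}$. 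To handle that case you must not take the largest admissible index $n(w)$ but rather optimise $(hH^2)^{n}A_n\langle w\rangle^{-\rho n}$ over all admissible $n$, using $A_n\leq c_0L^nM_n^{\rho}$ to recognise the infimum as $c_0e^{-\rho M(\langle w\rangle/(hH^2L)^{1/\rho})}$ (the optimal $n$ is admissible precisely in this regime), and then absorb the factor $e^{M(m|w|)}$ by exploiting that $m$ is arbitrary together with the iterated $(M.2)$-inequality $M(\lambda)\leq 2^{-j}M(H^{j}\lambda)+\ln c_0$, so as to end with $e^{-M(m'|w|)}$ for \emph{some} $m'>0$ (which is what boundedness in $\SSS^{\{M_p\}}$ requires). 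As written, your Roumieu argument would fail.

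Even in the Beurling case, the sentence ``tuning $h$ makes $(hH^2L/B^{\rho})^{n(w)}e^{M(m|w|)}$ decay like $e^{-M(h'|w|)}$ for any prescribed $h'$'' is precisely the nontrivial point and is asserted rather than proved: since $m$ (and $B$) are fixed and $h'$ is arbitrary, you need quantitative comparisons between the counting index $n(w)$ (largest $n$ with $Bm_n\leq\cdot$) and the associated function, e.g. $e^{M(m_{n+1})}\leq c_0H^{2n}$ (from $m_{n+1}^{n}\leq M_{2n}/M_n\leq c_0H^{2n}M_n$) together with a doubling-type estimate for the counting function of $(m_p)$ (available from $(M.2)$ and $(M.3)$) to dominate $e^{M(m|w|)+M(h'|w|)}$ by $\epsilon^{-2n(w)}$ with $\epsilon=\epsilon(h)$ small. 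Without some such lemma the conclusion of your first step is not established. Two smaller points: the hypothesis at level $n$ is applicable only for $w\in Q^c_{Bm_n}$, i.e. $\max(\langle x\rangle,\langle\xi\rangle)\geq Bm_n$, which is weaker than your selection criterion $\langle w\rangle\geq Bm_{n(w)}$ (fixable by working with $\langle w\rangle/\sqrt{2}$), and in your displayed bound the factors $h,H^2$ inside the parenthesis should carry the exponent $n(w)$.
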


In what follows, we will frequently use the term ``$*$-regularising set''
for a subset of $\mathcal{L}(\SSS'^*(\RR^d),\SSS^*(\RR^d))$. Changing the quantisation and taking composition of $\Psi$DOs with symbols in $\Gamma_{A_p,\rho}^{*,\infty}(\RR^{2d})$ always results in $\Psi$DOs with symbols in the same class modulo $*$-regularising operators; we collect some of these facts in the Appendix and we refer to \cite{PP1,BojanP} for the complete theory.

\subsection{Weyl quantisation. The sharp product in  $FS_{A_p,\rho}^{*,\infty}(\RR^{2d};B)$}\label{sub Weyl quantasation}

We recall in this and the next subsection results from \cite{PP1}
about the Weyl quantisation of symbols; we often write $a^w$ instead of
$\Op_{1/2}(a)$.

Given $\sum_j a_j,\sum_j b_j\in
FS_{A_p,\rho}^{*,\infty}(\RR^{2d};B)$ we define their sharp
product, denoted as $\sum_j a_j \# \sum_j b_j$, via the formal series $\sum_j
c_j=\sum_j a_j \# \sum_j b_j$ where
\beqs
c_j(x,\xi)=\sum_{s+k+l=j}\sum_{|\alpha+\beta|=l}\frac{(-1)^{|\beta|}} {\alpha!\beta!2^l}\partial^{\alpha}_{\xi}D^{\beta}_x
a_s(x,\xi)\partial^{\beta}_{\xi} D^{\alpha}_x b_k(x,\xi),\,\,
(x,\xi)\in Q^c_{Bm_j}.
\eeqs
It is easy to verify that $\sum_j c_j$
is a well defined element of $FS_{A_p,\rho}^{*,\infty}(\RR^{2d};B)$.
If $a\in \Gamma_{A_p,\rho}^{*,\infty}(\RR^{2d})$, then $a\#\sum_j
b_j$ will denote the $\#$ product of the image of $a$ under the
canonical inclusion
$\Gamma_{A_p,\rho}^{*,\infty}(\RR^{2d})\rightarrow
FS_{A_p,\rho}^{*,\infty}(\RR^{2d};B)$ and $\sum_j b_j$. The same convention applies if $b\in \Gamma_{A_p,\rho}^{*,\infty}(\RR^{2d})$ or if both
$a,b\in \Gamma_{A_p,\rho}^{*,\infty}(\RR^{2d})$.

\begin{remark}\label{real-valuedsym}
If $\sum_j a_j,\sum_j b_j\in FS_{A_p,\rho}^{*,\infty}(\RR^{2d};B)$ and $\sum_j c_j=\sum_j a_j\#\sum_jb_j$, then $\sum_j\overline{c_j}=\sum_j\overline{b_j}\#\sum_j\overline{a_j}$. In particular, if $a_j$ and $b_j$ are real-valued for all $j\in\NN$ and $\sum_ja_j\#\sum_jb_j=\sum_jb_j\#\sum_ja_j$, then $c_j$ are real-valued for all $j\in\NN$.
\end{remark}

\begin{proposition}\label{hyporingg}$($\cite[Proposition 4.5]{PP1}$)$
For each $B\geq 0$, $FS_{A_p,\rho}^{*,\infty}(\RR^{2d};B)$ is a
ring with the pointwise addition and multiplication given by $\#$.
Moreover, the multiplication
$\#:FS_{A_p,\rho}^{*,\infty}(\RR^{2d};B)\times
FS_{A_p,\rho}^{*,\infty}(\RR^{2d};B)\rightarrow
FS_{A_p,\rho}^{*,\infty}(\RR^{2d};B)$ is hypocontinuous.
\end{proposition}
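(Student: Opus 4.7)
The plan is first to verify that each $c_j$ is a well-defined element of $C^\infty(\mathrm{int}\,Q^c_{Bm_j})$ whose derivatives extend continuously to $Q^c_{Bm_j}$. Since $(M.1)$ on $A_p$ forces $m_p$ to be non-decreasing, the constraint $s+k+l=j$ gives $s,k\leq j$, and hence $Q^c_{Bm_j}\subseteq Q^c_{Bm_s}\cap Q^c_{Bm_k}$, so every summand in the defining formula is smooth on $\mathrm{int}\,Q^c_{Bm_j}$; the outer sum is finite, so the same holds for $c_j$. Distributivity of $\#$ over pointwise addition and the commutative group structure under addition are then immediate from the formula.

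The heart of the matter is the seminorm estimate for $\sum_j c_j$. I would bound $D^\nu_\xi D^\mu_x c_j(w)$ on $Q^c_{Bm_j}$ by applying Leibniz to each product $\partial^\alpha_\xi D^\beta_x a_s\cdot \partial^\beta_\xi D^\alpha_x b_k$ and then invoking the defining estimates for $\sum_j a_j$ (with parameters $h_1,m_1$) and for $\sum_j b_j$ (with $h_2,m_2$). Using $(M.2)$ on $A_p$ repeatedly, one combines $A_{\nu'+\alpha}A_{\nu-\nu'+\beta}$ and similarly for $\mu$ into $A_\nu A_\mu A_\alpha A_\beta$ up to constants of the form $c_0^2 H^{|\nu|+|\mu|+l}$, and $A_s^2 A_k^2$ together with the $A_\alpha A_\beta$ arising from the differentiation collapse into $A_j^2$ (times powers of $c_0,H$) via $s+k+l=j$ and $|\alpha+\beta|=l$. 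The factor $1/(\alpha!\beta!2^l)$ is absorbed by the log-convexity of $A_p/p!$, a consequence of $(M.4)$ together with $A_0=A_1=1$, which lets one control the sum $\sum_{|\alpha+\beta|=l}$ uniformly in $l$. The two exponential weights combine through $(M.2)$ on $M_p$, and the powers of $\langle w\rangle$ collapse to $\langle w\rangle^{-\rho(|\mu|+|\nu|+2j)}$ because the total differentiation order plus $2l+2s+2k$ is exactly $|\mu|+|\nu|+2j$. The outcome is that $\sum_j c_j\in FS_{A_p,\rho}^{*,\infty}(\RR^{2d};B)$, with constants depending continuously on the seminorms of the two factors; this yields at once the closure of $FS_{A_p,\rho}^{*,\infty}(\RR^{2d};B)$ under $\#$ and, by letting one of the two arguments range over a bounded set, the desired hypocontinuity.

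It remains to verify associativity. I would check this by a purely formal rearrangement: both $\bigl((\sum_j a_j)\#(\sum_j b_j)\bigr)\#(\sum_j c_j)$ and $(\sum_j a_j)\#\bigl((\sum_j b_j)\#(\sum_j c_j)\bigr)$ produce at index $j$ a finite combinatorial sum over decompositions $s_1+s_2+s_3+l_1+l_2=j$ together with four multi-indices of constrained length, and exchanging the order of summation and grouping the multi-index binomials in symmetric Moyal-type form shows coincidence term by term. The principal obstacle is the bookkeeping of the previous paragraph: five summation indices together with two Leibniz decompositions must be tracked simultaneously, and a careless estimate loses either the crucial factor $A_j^2$ (coming from the combined contributions of $s,k,l$ and of $\alpha,\beta$) or the full negative power $\langle w\rangle^{-\rho\cdot 2j}$. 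Both are recovered only by applying $(M.2)$ on $A_p$ and the log-convexity of $A_p/p!$ in a coordinated fashion, which is the principal place where the assumption $(M.4)$ on $A_p$ enters.
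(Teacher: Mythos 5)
The paper itself offers no proof of this proposition: it is quoted verbatim from \cite[Proposition 4.5]{PP1}, and the sentence preceding it ("it is easy to verify that $\sum_j c_j$ is a well defined element...") is exactly the computation you outline, so your proposal follows the expected route and is sound — domain compatibility of the terms of $c_j$ from the monotonicity of $m_p$, Leibniz' rule combined with the defining estimates and $(M.1)$, $(M.2)$ for $A_p$ to collapse the sequence factors into $A_\nu A_\mu A_j^2$ and the powers of $\langle w\rangle$ into $\langle w\rangle^{-\rho(|\nu|+|\mu|+2j)}$, $(M.2)$ for $M_p$ to merge the exponential weights (with the correct Beurling/Roumieu quantifier bookkeeping), and the formal Moyal rearrangement for associativity. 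Three marginal points. First, the monotonicity of $m_p=M_p/M_{p-1}$ comes from $(M.1)$ for $M_p$, not for $A_p$. Second, the factor $1/(\alpha!\beta!2^l)$ needs no "absorption": it is favourable, since $\sum_{|\alpha+\beta|=l}1/(\alpha!\beta!2^l)=d^l/l!$ is bounded uniformly in $l$, so $(M.1)$ and $(M.2)$ for $A_p$ already suffice and $(M.4)$ plays no essential role in this particular estimate. Third, your derivation of hypocontinuity "by letting one argument range over a bounded set" tacitly assumes that bounded subsets of $FS_{A_p,\rho}^{*,\infty}(\RR^{2d};B)$ are bounded in a single step of the inductive limit; a cleaner finish is to observe that your estimates give continuity of the multiplication on each pair of steps, hence separate continuity on the limits, and then invoke the barrelledness of $FS_{A_p,\rho}^{*,\infty}(\RR^{2d};B)$ recorded in Section \ref{sec_3}, which upgrades separate continuity of a bilinear map to hypocontinuity.
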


The multiplicative identity of $FS_{A_p,\rho}^{*,\infty}(\RR^{2d};B)$ is given by $\mathbf{1}$. The $\#$-product of symbols corresponds to the composition of their Weyl quantisation (see the Appendix).

\section{Hypoelliptic operators of infinite order}\label{subrealisations}

This section is devoted to hypoellipticity in the context of our symbol classes. Our main goal below is to establish a semi-boundedness result. In preparation, we start by discussing
$L^2$-realisations of the associated unbounded operators.

\begin{lemma}\label{operatoronl2}$($\cite[Lemma 5.3]{PP1}$)$
Let $V\subseteq \Gamma_{A_p,\rho}^{*,\infty}(\RR^{2d})$. Assume that for every $h>0$ there exists $C>0$ (resp. there exist $h,C>0$) such that
\beq\label{estforope}
\left|D^{\alpha}_w b(w)\right|\leq Ch^{|\alpha|}A_{\alpha}\langle w\rangle^{-\rho|\alpha|},\,\, w\in\RR^{2d},\,\alpha\in \NN^{2d},\, b\in V.
\eeq
Then, for each $b\in V$, $b^w$ extends to a bounded operator on $L^2(\RR^d)$ and the set $\{b^w|\, b\in V\}$ is bounded in $\mathcal{L}_b(L^2(\RR^d),L^2(\RR^d))$. If $\{b_{\lambda}\}_{\lambda\in\Lambda}\subseteq V$ is a net that converges to $b_0\in V$ in the topology of $\Gamma_{A_p,\rho}^{*,\infty}(\RR^{2d})$, then $b_{\lambda}^w\rightarrow b_0^w$ in $\mathcal{L}_p(L^2(\RR^d),L^2(\RR^d))$.
\end{lemma}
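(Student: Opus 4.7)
The plan is to split the proof into two steps: first establish uniform $L^2$-boundedness of $\{b^w:b\in V\}$, and then deduce the simple convergence of nets via a density argument.

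For the boundedness, observe that the hypothesis places every $b\in V$ in the classical Shubin symbol class $\Gamma^0_\rho(\RR^{2d})$ in the H\"ormander sense: setting $\alpha=0$ gives $|b(w)|\leq C$ (since $A_0=1$), and the estimates for $|\alpha|\geq 1$ even decay as $|w|\to\infty$. Moreover, the constants are uniform in $b\in V$: in the $(M_p)$ case, $C$ depends on $h$ but not on the particular $b$, and in the $\{M_p\}$ case a single pair $(h,C)$ works for all $b\in V$. I would then invoke the Calder\'on--Vaillancourt theorem for the Weyl quantisation, which bounds $\|b^w\|_{\mathcal{L}(L^2)}$ in terms of finitely many sup-norms $\sup_{|\alpha|\leq N_0,\,w\in\RR^{2d}}|D^\alpha_w b(w)|$ for some $N_0$ depending only on $d$. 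Since the hypothesis controls all these sup-norms uniformly in $b\in V$, we get the uniform operator bound. (One could equivalently appeal to the Weyl--H\"ormander calculus for the Shubin metric $g=|dw|^2/\langle w\rangle^{2\rho}$, which handles the Weyl quantisation of order-zero symbols with decay in the derivatives.)

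For the convergence of nets, I would use that $\SSS^*(\RR^d)\hookrightarrow L^2(\RR^d)$ densely together with the continuity of the Weyl quantisation $a\mapsto a^w$ from $\Gamma^{*,\infty}_{A_p,\rho}(\RR^{2d})$ into $\mathcal{L}(\SSS^*(\RR^d))$, a standard fact from the symbolic calculus recalled in Section \ref{sec_3} and the Appendix. Then $b_\lambda\to b_0$ in $\Gamma^{*,\infty}_{A_p,\rho}$ implies $b_\lambda^w u\to b_0^w u$ in $\SSS^*(\RR^d)$, and hence in $L^2(\RR^d)$, for every $u\in\SSS^*(\RR^d)$. Combining this with the uniform $L^2$-bound from the first step and the density of $\SSS^*$ in $L^2$, a standard $\varepsilon/3$-argument extends the simple convergence from $\SSS^*$ to all of $L^2$, which is the desired convergence in $\mathcal{L}_p(L^2(\RR^d),L^2(\RR^d))$.

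The main technical point is securing the Calder\'on--Vaillancourt estimate with constants that depend only on seminorms explicitly controlled by the hypothesis; once that is in place, the rest is a clean density argument. It is worth noting that the much finer Gevrey-type bounds $|D^\alpha b|\leq Ch^{|\alpha|}A_\alpha\langle w\rangle^{-\rho|\alpha|}$ are not needed for the $L^2$-boundedness itself (only finitely many derivatives enter), but they are precisely what makes the Weyl quantisation continuous into $\mathcal{L}(\SSS^*(\RR^d))$, powering the net convergence step.
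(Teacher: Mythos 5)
Your argument is correct, and since the present paper only quotes this lemma from \cite{PP1} (Lemma 5.3) without reproducing its proof, there is no line-by-line comparison to make; your route is the natural one. The reduction to the finite-order setting --- the hypothesis \eqref{estforope} places $V$ in a bounded subset of the Shubin class $\Gamma^{0}_{\rho}(\RR^{2d})$ with constants uniform over $V$, after which Calder\'on--Vaillancourt for the Weyl quantisation (equivalently, \cite[Theorem 1.7.14, p. 58]{NR}, which is exactly what Remark \ref{ktv957939} invokes for the analogous purpose) gives the uniform $L^{2}$-bound --- is the expected mechanism; an alternative used elsewhere in this paper (Proposition \ref{semi-bound}) is to pass through the anti-Wick quantisation of \cite{PP}, where one has $\|A_{a}\|\leq \|a\|_{L^{\infty}}$ plus a $*$-regularising remainder, but nothing is gained here by doing so. Two small points deserve a sentence in a written-up version. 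First, your $\varepsilon/3$ argument yields $b^{w}_{\lambda}f\to b^{w}_{0}f$ for every $f\in L^{2}(\RR^{d})$, i.e. convergence in $\mathcal{L}_{\sigma}(L^{2}(\RR^{d}),L^{2}(\RR^{d}))$, whereas the statement asks for $\mathcal{L}_{p}$, the topology of precompact convergence; to bridge this, note that the net $\{b^{w}_{\lambda}\}$ is uniformly norm-bounded by your first step, hence equicontinuous, so pointwise convergence upgrades to uniform convergence on precompact sets by the Banach--Steinhaus theorem \cite[Theorem 4.5, p. 85]{Sch} --- the same device used repeatedly in Section \ref{Section heat parametrix}. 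Second, when applying the Calder\'on--Vaillancourt estimate to $\varphi\in\SSS^{*}(\RR^{d})$ one should observe that for bounded symbols the oscillatory-integral definition of the Weyl operator coincides with the iterated-integral definition of $\Op_{1/2}(b)$ used in this paper, so the a priori bound concerns the same operator before extending it by density of $\SSS^{*}(\RR^{d})$ in $L^{2}(\RR^{d})$; this is immediate, but worth recording.
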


Given $a\in\Gamma^{*,\infty}_{A_p,\rho}(\RR^{2d})$, let us denote by $A$
the unbounded operator on $L^2(\RR^d)$ with domain $\SSS^*(\RR^d)$
defined as $A \varphi=a^w\varphi$, $\varphi\in\SSS^*(\RR^d)$. Considering
$a^w$ as a mapping on  $\SSS'^*(\RR^d),$ its restriction   to the subspace $\{g\in L^2(\RR^d)|\, a^wg\in
L^2(\RR^d)\}$ defines a closed extension of $A$ which is called
the maximal realisation of $A$. As standard, we denote by $\overline{A}$ the
closure of $A$, also called the minimal realisation of $A$. Notice that the formal adjoint $(a^w)^*$ is in fact the pseudo-differential operator
$\bar{a}^w$ and hence, it can be extended to a continuous operator
on $\SSS'^*(\RR^d)$. One can also consider
the adjoint $A^*$ of $A$ in $L^2(\RR^d)$. The following result
gives the precise connection between $A^*$ and $(a^w)^*$. Its proof is
completely analogous to the one in the classical case for finite order $\Psi$DOs and we omit it (see for example \cite[Proposition 4.2.1,
p. 160]{NR}).

\begin{proposition}\label{formal_adj}
Let $a\in\Gamma_{A_p,\rho}^{*,\infty}(\RR^{2d})$ with $A$ and
$A^*$ defined as above. Then $A^*$ coincides with the maximal
realisation of $(a^w)^*$, i.e. the domain of $A^*$ is
$D(A^*)=\{g\in L^2(\RR^d)|\, (a^w)^*g\in L^2(\RR^d)\}$ and
$A^*g=(a^w)^*g$, $\forall g\in D(A^*)$.
\end{proposition}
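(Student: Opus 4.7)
The plan is to prove the two opposite inclusions of the domains: first that $\{g\in L^2(\RR^d):\bar a^w g\in L^2(\RR^d)\}\subseteq D(A^*)$ with $A^* g=\bar a^w g$ on this set, and then the converse inclusion $D(A^*)\subseteq\{g\in L^2(\RR^d):\bar a^w g\in L^2(\RR^d)\}$. The backbone of the argument is the formal adjoint identity
\[
(a^w\varphi,\psi)_{L^2}=(\varphi,\bar a^w\psi)_{L^2},\qquad \varphi,\psi\in\SSS^*(\RR^d),
\]
which holds because the Weyl symbol of the formal adjoint of $a^w$ is $\bar a$, together with the density of $\SSS^*(\RR^d)$ in $L^2(\RR^d)$.

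For the first inclusion, take $g\in L^2(\RR^d)$ with $\bar a^w g\in L^2(\RR^d)$; here $\bar a^w g$ is computed a priori in $\SSS'^*(\RR^d)$, which makes sense since $\bar a^w$ extends to a continuous operator on $\SSS'^*(\RR^d)$ and $L^2(\RR^d)\hookrightarrow\SSS'^*(\RR^d)$. By continuity of $\bar a^w$ on $\SSS'^*(\RR^d)$ and the identification of the $L^2$ inner product with the sesquilinear pairing of $\SSS^*(\RR^d)$ and $\SSS'^*(\RR^d)$, one has, for every $\varphi\in\SSS^*(\RR^d)$,
\[
(A\varphi,g)_{L^2}=\langle g,\overline{a^w\varphi}\rangle=\langle \bar a^w g,\bar\varphi\rangle=(\varphi,\bar a^w g)_{L^2},
\]
where the middle equality is the distributional version of the formal adjoint identity. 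Hence the linear form $\varphi\mapsto(A\varphi,g)$ is bounded on $\SSS^*(\RR^d)$ with the $L^2$ topology, so $g\in D(A^*)$ and $A^* g=\bar a^w g$.

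For the converse, let $g\in D(A^*)$ and set $g^*:=A^*g\in L^2(\RR^d)$; by definition $(a^w\varphi,g)_{L^2}=(\varphi,g^*)_{L^2}$ for every $\varphi\in\SSS^*(\RR^d)$. Viewing $g\in L^2(\RR^d)\subset\SSS'^*(\RR^d)$, the ultradistribution $\bar a^w g\in\SSS'^*(\RR^d)$ satisfies, again by the distributional form of the formal adjoint identity,
\[
\langle\bar a^w g,\bar\varphi\rangle=\langle g,\overline{a^w\varphi}\rangle=(a^w\varphi,g)_{L^2}=(\varphi,g^*)_{L^2}=\langle g^*,\bar\varphi\rangle,
\]
for all $\varphi\in\SSS^*(\RR^d)$. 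Since $\SSS^*(\RR^d)$ is dense in $\SSS^*(\RR^d)$ (and the conjugation is a topological isomorphism of $\SSS^*(\RR^d)$ onto itself), we conclude that $\bar a^w g=g^*$ as elements of $\SSS'^*(\RR^d)$; in particular $\bar a^w g\in L^2(\RR^d)$, which is the required inclusion, and $A^*g=\bar a^w g$ coincides with the action of the maximal realisation of $\bar a^w$.

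The only non-routine points are therefore the correct bookkeeping between the sesquilinear $L^2$ inner product and the $\SSS^*$--$\SSS'^*$ pairing (which is the place one must invoke continuity of $\bar a^w$ on $\SSS'^*(\RR^d)$ from the symbolic calculus in Section \ref{sec_3}) and the density of $\SSS^*(\RR^d)$ in $L^2(\RR^d)$; both are standard in the ultradistributional framework, which is exactly why the argument carries over verbatim from the classical finite-order treatment in \cite[Proposition 4.2.1]{NR}.
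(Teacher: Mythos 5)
Your argument is correct and is essentially the paper's own (omitted) proof: the paper simply refers to the classical duality argument of \cite[Proposition 4.2.1, p. 160]{NR}, which is exactly the two-inclusion scheme you carry out, based on the formal adjoint identity, the continuous extension of $\bar a^w$ to $\SSS'^*(\RR^d)$, and the density of $\SSS^*(\RR^d)$ in $L^2(\RR^d)$. Two cosmetic points only: the phrase ``$\SSS^*(\RR^d)$ is dense in $\SSS^*(\RR^d)$'' should just say that an element of $\SSS'^*(\RR^d)$ is determined by its values on all test functions (conjugation being a bijection of $\SSS^*(\RR^d)$), and your pairing identities hold up to an overall complex conjugation depending on the chosen inner-product convention, which affects none of the conclusions.
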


We now introduce the notion of hypoellipticity in $\Gamma^{*,\infty}_{A_p,\rho}$.
\begin{definition}(\cite[Definition 1.1]{CPP})
Let $a\in\Gamma^{*,\infty}_{A_p,\rho}(\RR^{2d})$. We say that $a$
is $\Gamma^{*,\infty}_{A_p,\rho}$-hypoelliptic (or, in short, simply hypoelliptic) if
\begin{itemize}
\item[$i$)] there exists $B>0$ such that there are $c,m>0$ (resp. for every $m>0$ there is $c>0$) such that
\beq\label{dd1}
|a(x,\xi)|\geq c e^{-M(m|x|)-M(m|\xi|)},\quad
(x,\xi)\in Q^c_B,
\eeq
\item[$ii$)] there exists $B>0$ such that for every $h>0$ there is $C>0$ (resp. there are $h,C>0$) such that
\beq\label{dd2}
\left|D^{\alpha}_{\xi}D^{\beta}_x
a(x,\xi)\right|\leq
C\frac{h^{|\alpha|+|\beta|}|a(x,\xi)|A_{\alpha}A_{\beta}}{\langle(x,\xi)\rangle^{\rho(|\alpha|+|\beta|)}},\,\,
\alpha,\beta\in\NN^d,\, (x,\xi)\in Q^c_B.
\eeq
\end{itemize}
\end{definition}

Operators with hypoelliptic symbols have parametrices and hence are globally regular; see the Appendix for the precise results.

\begin{proposition}\label{maximalreal}$($\cite[Proposition 5.4]{PP1}$)$
Let $a$ be hypoelliptic and $A$ be the corresponding unbounded
operator on $L^2(\RR^d)$ defined above. Then the minimal
realisation $\overline{A}$ coincides with the maximal realisation.
Moreover, $\overline{A}$ coincides with the restriction of $a^w$
on the domain of $\overline{A}$. If additionally $a$ is real-valued, then $\overline{A}$ is a self-adjoint operator on $L^2(\RR^d)$.
\end{proposition}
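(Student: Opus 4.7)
\emph{Proof plan.} Denote by $A_{\max}$ the maximal realisation of $a^w$. The inclusion $\overline A\subseteq A_{\max}$ is automatic: since $a^w:\SSS'^*(\RR^d)\to\SSS'^*(\RR^d)$ is continuous, the graph of $A_{\max}$ is closed in $L^2(\RR^d)\oplus L^2(\RR^d)$ and contains that of $A$. Once the reverse inclusion $D(A_{\max})\subseteq D(\overline A)$ is established, $\overline A u=a^w u$ on the common domain holds by construction, and the self-adjointness claim follows immediately: when $a$ is real-valued $(a^w)^*=\bar a^w=a^w$, so Proposition \ref{formal_adj} gives $A^*=A_{\max}$; combining with $\overline A=A_{\max}$ and the symmetry of $A$ on $\SSS^*(\RR^d)$ yields $\overline A=A^*=(\overline A)^*$.

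For the non-trivial inclusion I would invoke the parametrix supplied by hypoellipticity (Appendix): there exist $b\in\Gamma^{*,\infty}_{A_p,\rho}(\RR^{2d})$ and a $*$-regularising operator $K$ such that $b^w\circ a^w=I+K$ on $\SSS'^*(\RR^d)$. Given $u\in D(A_{\max})$ with $f:=a^w u\in L^2(\RR^d)$, this reads $u=b^wf-Ku$ in $\SSS'^*(\RR^d)$, with $Ku\in\SSS^*(\RR^d)$. I would construct an approximating family $u_\varepsilon\in\SSS^*(\RR^d)$ by a Friedrichs-type regularisation adapted to the ultradifferentiable setting: choose $\chi,\phi\in\SSS^*(\RR^d)$ with $\chi(0)=1$ and $\int\phi=1$, and set $u_\varepsilon(x)=\chi(\varepsilon x)(\phi_\varepsilon*u)(x)$ with $\phi_\varepsilon=\varepsilon^{-d}\phi(\cdot/\varepsilon)$. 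Then $u_\varepsilon\in\SSS^*(\RR^d)$, and $u_\varepsilon\to u$ in $L^2$ as $\varepsilon\to 0^+$, so the crux is to prove $a^w u_\varepsilon\to a^w u$ in $L^2$.

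Writing $M_\varepsilon u:=u_\varepsilon$ and $a^w u_\varepsilon=M_\varepsilon f+[a^w,M_\varepsilon]u$, the first summand converges to $f$ in $L^2$ by $M_\varepsilon\to I$ strongly on $L^2$, and the whole argument reduces to controlling the commutator. This is the main obstacle: in the infinite-order setting $b^w$ is itself of infinite order and is \emph{not} bounded on $L^2$, so Shubin's finite-order reduction---where one applies $b^w$ to collapse the commutator against a smoothing operator---is unavailable. Instead I would expand $[a^w,M_\varepsilon]$ through the sharp-product calculus of Subsection \ref{sub Weyl quantasation}, which produces an $\varepsilon$-uniform family of symbols in $\Gamma^{*,\infty}_{A_p,\rho}(\RR^{2d})$ carrying an extra decay factor $\langle w\rangle^{-\rho}$ coming from the Poisson-bracket leading term; Proposition \ref{eqsse} and Lemma \ref{operatoronl2} then supply uniform $L^2$-bounds. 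Substituting $u=b^wf-Ku$ into $[a^w,M_\varepsilon]u$, the $Ku$-contribution vanishes because $[a^w,M_\varepsilon]\to 0$ on $\SSS^*(\RR^d)$, while the $b^wf$-contribution is handled by the gained $\langle w\rangle^{-\rho}$ factor, which compensates the infinite-order growth of $b$; this is where the full subordination machinery of Section \ref{sec_3} is brought to bear.
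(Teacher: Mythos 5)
The easy half of your argument is fine: $\overline A\subseteq A_{\max}$ follows from the continuity of $a^w$ on $\SSS'^*(\RR^d)$ together with the continuous inclusion $L^2(\RR^d)\hookrightarrow\SSS'^*(\RR^d)$, and, granted $\overline A=A_{\max}$, your deduction of self-adjointness from Proposition \ref{formal_adj} is correct. Note, however, that the paper does not reproduce a proof of this proposition at all (it is quoted from \cite[Proposition 5.4]{PP1}), so your plan can only be judged on its own merits rather than matched against an argument in the text.

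The crux, namely $a^w u_\varepsilon\to a^w u$ in $L^2$, is precisely where your plan has genuine gaps. The claimed gain of $\langle w\rangle^{-\rho}$ only materialises after composing the commutator with $b^w$ (since $[a^w,M_\varepsilon]$ alone has a symbol carrying the full factor $a$, hence is not $L^2$-bounded); you acknowledge this, but then everything hinges on convergence statements that are asserted, not proved: (i) $[a^w,M_\varepsilon]b^w\to 0$ strongly on $L^2$, and (ii) $[a^w,M_\varepsilon](Ku)\to 0$ in $L^2$. For (i), the calculus (Theorem \ref{weylq}, Corollary \ref{corweylqu}, Lemma \ref{operatoronl2}) does yield uniform $L^2$-bounds for the composed family, including for the $\varepsilon$-dependent $*$-regularising remainders, which form an equicontinuous subset of $\mathcal{L}(\SSS'^*(\RR^d),\SSS^*(\RR^d))$; but uniform boundedness does not give convergence to zero, since the remainders carry no explicit factor of $\varepsilon$. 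One must add a density argument (convergence on $\SSS^*(\RR^d)$ plus the uniform bounds), and that in turn requires (iii): $M_\varepsilon\to\mathrm{Id}$ at least pointwise in the topology of $\SSS^*(\RR^d)$; statement (ii) reduces to (iii) as well. Convergence of the Friedrichs regularisation $\chi(\varepsilon\,\cdot)(\phi_\varepsilon*\,\cdot\,)$ in the Gelfand--Shilov topology is true but not automatic — in the Roumieu case it has to be run through the projective description via sequences $(r_p)\in\mathfrak R$ — and nothing in your plan addresses it. Two smaller omissions: $M_\varepsilon$ is a Kohn--Nirenberg-type quantisation, so bringing it into the Weyl/$\#$-calculus introduces further $\varepsilon$-dependent regularising errors that must be kept equicontinuous; and the hypoellipticity bounds (\ref{dd2}) hold only on $Q_B^c$, so the symbol estimates you invoke for the commutator require the usual modification near the origin. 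Until (i)--(iii) are carried out, what you have is a plausible programme, consistent with the tools of Sections \ref{sec_3} and \ref{app}, rather than a proof.
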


\subsection{Semi-boundedness and the spectrum of operators with positive hypoelliptic Weyl symbols}\label{sub semi-boundedness}

Before we can say anything meaningful about the spectrum of operators with hypoelliptic positive Weyl symbols, we need to prove that such operators are always semi-bounded. This is a well know fact for finite order symbols. We prove here that it remains true even in the infinite order case. In order to appreciate more this result, the reader should keep in mind the operators can be of truly infinite order, i.e. the symbols are allowed to have ultrapolynomial growth; such operators then go beyond the classical Weyl-H\"ormander calculus.

\begin{proposition}\label{semi-bound}
Let $b\in\Gamma_{A_p,\rho}^{*,\infty}(\RR^{2d})$ be positive
hypoelliptic symbol. Then, there exists $C>0$ such that
$(b^w\varphi,\varphi)\geq -C\|\varphi\|^2_{L^2(\RR^d)}$, $\forall
\varphi\in\SSS^*(\RR^d)$.
\end{proposition}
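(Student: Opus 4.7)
The plan is to derive semi-boundedness from the positivity of the Anti-Wick quantization developed by Pilipovi\'c and Prangoski in [PP]. Recall that the Anti-Wick quantization has the distinctive feature that $b \geq 0$ automatically implies $(b^{AW}\varphi,\varphi) \geq 0$ for every $\varphi\in\SSS^*(\RR^d)$, since $b^{AW}$ is built as an average of rank-one projections weighted by $b$. Thus everything reduces to comparing $b^w$ with $b^{AW}$ and showing that the difference extends to a bounded operator on $L^2(\RR^d)$; once that is done, writing
\begin{equation*}
(b^w\varphi,\varphi)=(b^{AW}\varphi,\varphi)+((b^w-b^{AW})\varphi,\varphi)\geq -\|b^w-b^{AW}\|_{\mathcal{L}(L^2)}\|\varphi\|^{2}_{L^2(\RR^d)}
\end{equation*}
will deliver the conclusion.

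The comparison $b^w - b^{AW}$ is carried out via the symbolic formula from [PP], which identifies $b^{AW}$ with $(b\ast\Phi)^w$ for a suitably chosen Gaussian $\Phi$ adapted to the Gelfand--Shilov setting. A Taylor expansion of $b\ast\Phi$ around $w$, exploiting that $\Phi$ has vanishing odd moments, represents $b - b\ast\Phi$ as an expansion involving only $D^\alpha b$ with $|\alpha|\geq 2$. At this point the hypoellipticity estimate \eqref{dd2} is the crucial input: it furnishes
\begin{equation*}
|D^{\alpha}b(w)|\leq Ch^{|\alpha|}A_{\alpha} b(w)\langle w\rangle^{-\rho|\alpha|}, \quad w\in Q^c_B, \ |\alpha|\geq 2,
\end{equation*}
so each term in the expansion of $b - b\ast\Phi$ gains at least a factor $\langle w\rangle^{-2\rho}$ relative to $b$.

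In the finite-order Shubin calculus this single gain would already place the remainder in a class to which Calder\'on--Vaillancourt applies, but here $b$ itself may grow ultrapolynomially, so the gain is \emph{relative} rather than absolute. My plan to overcome this is to iterate the Anti-Wick reduction: inductively choose non-negative correction symbols $c_1,\dots,c_N\in\Gamma^{*,\infty}_{A_p,\rho}(\RR^{2d})$ so that the residual symbol
\begin{equation*}
r_N := b - \sum_{j=1}^{N} c_j\ast\Phi
\end{equation*}
satisfies, after finitely many steps, the hypotheses \eqref{estforope} of Lemma~\ref{operatoronl2}, namely $|D^\alpha r_N(w)|\leq C h^{|\alpha|}A_\alpha\langle w\rangle^{-\rho|\alpha|}$. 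Since each $c_j^{AW}$ contributes a non-negative term to $(b^w\varphi,\varphi)$ and the residual $r_N^w$ is $L^2$-bounded by Lemma~\ref{operatoronl2}, the desired inequality follows. The finite termination of this procedure must be justified by tracking how the relative gain $\langle w\rangle^{-2\rho}$ compounds against the growth of $b$, together with a controlled modification on the compact set $Q_B$ (where hypoellipticity is not guaranteed) by a cutoff that does not affect positivity modulo an $L^2$-bounded error.

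The main obstacle I anticipate is precisely the organisation of this iteration in the infinite-order setting: ensuring that the corrections $c_j$ remain non-negative, that they live uniformly in $\Gamma^{*,\infty}_{A_p,\rho}(\RR^{2d})$ with compatible constants, and that the residual symbol genuinely falls into the Calder\'on--Vaillancourt-type class of Lemma~\ref{operatoronl2} after finitely many steps. This requires leveraging the full strength of the hypoellipticity estimates \eqref{dd1}--\eqref{dd2} uniformly in all derivative orders, rather than the simple order-reduction argument available in the finite-order Shubin calculus.
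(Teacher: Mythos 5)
Your overall strategy (reduce to positivity of the Anti--Wick quantisation plus an $L^2$-bounded error) is the right one and is indeed the paper's starting point, but the core of your plan contains a step that cannot work as stated: the claim that after \emph{finitely} many corrections the residual $r_N=b-\sum_{j\le N}c_j\ast\Phi$ falls into the Calder\'on--Vaillancourt-type class \eqref{estforope} of Lemma~\ref{operatoronl2}. Each correction only gains the \emph{relative} factor $\langle w\rangle^{-2\rho}$, so after $N$ steps the best you can hope for is $|r_N(w)|\lesssim b(w)\langle w\rangle^{-2\rho N}$; since $b$ is genuinely of ultrapolynomial growth (e.g. comparable to $e^{M(m|w|)}$), this is still unbounded for every finite $N$, and condition \eqref{estforope} with $\alpha=0$ requires boundedness. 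So the iteration can never terminate, and one is forced to use the full infinite asymptotic expansion, at which point all the resummation machinery (the cutoffs $\chi_{j,R}$, the factorial growth $A_{2j}$ of the constants, the need to take the resummation parameter $R$ large compared with $h$, Propositions~\ref{subordinate} and \ref{eqsse}) becomes the actual content of the proof rather than a technicality. A second structural problem is your insistence that each correction $c_j$ be non-negative: the natural Taylor/moment corrections alternate in sign (in the paper they appear as $(-1)^jb_j$), so term-by-term positivity is not available and is also not needed.

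For comparison, the paper runs the expansion in the opposite direction and avoids both issues: by \cite[Theorem 3.2]{PP} one writes $b^w=A_a+T$ with $T$ $*$-regularising, where $a\sim\sum_j(-1)^jb_j$ and, because positivity of $b$ lets \eqref{dd2} hold on all of $\RR^{2d}$ (so no cutoff near $Q_B$ is needed), the $b_j$ obey \eqref{est_anti-wick}. Re-resumming with a sufficiently large parameter $R'$ one gets a real-valued $a'\sim a$ whose tail is dominated by $b/3$, hence $a'\ge 2b/3>0$ \emph{globally} --- positivity of the total resummed symbol, not of the individual terms. Then $(A_{a'}\varphi,\varphi)\ge 0$ by the Anti--Wick positivity you invoke, and $A_{a'}-b^w$ is a $*$-regularising operator $T'$, which is bounded on $L^2$, giving $(b^w\varphi,\varphi)\ge-\|T'\|_{\mathcal{L}_b(L^2(\RR^d))}\|\varphi\|_{L^2}^2$. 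If you want to salvage your outline, you should replace ``finitely many non-negative corrections'' by this single resummed correction with a lower bound on the total, which is exactly where the hypoellipticity estimate \eqref{dd2}, uniform in all derivative orders, is used.
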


\begin{proof} The proof heavily relies on the connection between the Weyl and the anti-Wick quantisation of symbols from $\Gamma_{A_p,\rho}^{*,\infty}(\RR^{2d})$ (see \cite{PP}). For $a\in \Gamma_{A_p,\rho}^{*,\infty}(\RR^{2d})$, we denote by $A_a$ its anti-Wick quantisation. By \cite[Theorem 3.2]{PP}, there exists
$a\in\Gamma_{A_p,\rho}^{*,\infty}(\RR^{2d})$ and a
$*$-regularising operator $T$ such that $b^w=A_a+T$. By a careful
inspection of  the proof of the quoted result, one can find the explicit
construction of $a$; it is given as follows. Start with $p'_{k,j}\in C^{\infty}(\RR^{2d})$, $k,j\in\NN$, defined
by $p'_{0,0}=b$, $p'_{k,0}=0$ for all $k\in\ZZ_+$, $p'_{k,j}=0$ for
all $0\leq k<j$, and
\beqs
p'_{k,j}(x,\xi)&=&\sum_{\substack{l_1+\ldots+l_j=k\\ l_1\geq
1,\ldots, l_j\geq
1}}\sum_{|\alpha^{(1)}+\beta^{(1)}|=2l_1,\ldots,|\alpha^{(j)}+\beta^{(j)}|=2l_j}
\frac{c_{\alpha^{(1)},\beta^{(1)}}\cdot\ldots\cdot
c_{\alpha^{(j)},\beta^{(j)}}}{\alpha^{(1)}!\beta^{(1)}!\cdot\ldots\cdot
\alpha^{(j)}!\beta^{(j)}!}\\
&{}&\cdot\partial^{\alpha^{(1)}+\ldots+ \alpha^{(j)}}_{\xi}\partial^{\beta^{(1)}+\ldots+\beta^{(j)}}_xb(x,\xi),
\eeqs
for all $x,\xi\in\RR^d$, $k\geq j$, where
$c_{\alpha,\beta}=\pi^{-d}\int_{\RR^{2d}}\eta^{\alpha}y^{\beta}e^{-|y|^2-|\eta|^2}dyd\eta$,
$\alpha,\beta\in\NN^d$. Since $b$ is positive and hypoelliptic, the estimate (\ref{dd2}) holds on the whole $\RR^{2d}$ for $b$. Repeating the proof of \cite[Theorem
3.2]{PP} verbatim and using (\ref{dd2}) for $b$ (which, as we mentioned, is valid on $\RR^{2d}$), we obtain the following estimate: for every $h>0$ there exists $C>0$
(resp. there exist $h,C>0$) such that
\beqs
\left|D^{\gamma}_w
p'_{k,j}(w)\right|\leq C h^{|\gamma|+2k}A_{|\gamma|+2k}b(w)\langle
w\rangle^{-\rho(|\gamma|+2k)},
\eeqs
for all $w\in\RR^{2d}$,
$\gamma\in\NN^{2d}$, $k,j\in\NN$ (recall that $p'_{k,j}=0$, for
$0\leq k<j$, $p'_{k,0}=0$ for $k\in\ZZ_+$, and $p'_{0,0}=b$). Now,
$a\sim \sum_j (-1)^j b_j$ with $b_j=R(\sum_k p'_{k,j})$, where
$R\geq1$ can be chosen to be the same for all $j\in\NN$ and the
following estimate holds: for every $h>0$ there exists $C>0$
(resp. there exist $h,C>0$) such that
\beq\label{est_anti-wick}
\left|D^{\gamma}_wb_j(w)\right|\leq C
h^{|\gamma|+2j}A_{|\gamma|+2j}b(w)\langle
w\rangle^{-\rho(|\gamma|+2j)},
\eeq
for all $w\in\RR^{2d}$,
$\gamma\in\NN^{2d}$, $j\in\NN$ (cf. \cite[Lemma 3.1]{PP} and its
proof). Clearly $b_0=p'_{0,0}=b$. In the $(M_p)$ case, fix
$0<h'<1$ and let $C'>1$ be the constant for which
(\ref{est_anti-wick}) holds and in the $\{M_p\}$ case, let
$h',C'>1$ be the constants for which this estimate holds. If we
take large enough $R'$ such that $R'^{\rho}\geq 4c_0^2HLC'$ in the
$(M_p)$ case and $R'^{\rho}\geq 4c_0^2h'HLC'$ in the $\{M_p\}$
case respectively, then $a'=R'(\sum_j(-1)^j
b_j)\in\Gamma_{A_p,\rho}^{*,\infty}(\RR^{2d})$ is real-valued and
$a'\sim a$, i.e. $a-a'\in\SSS^*(\RR^{2d})$ (cf. Propositions \ref{subordinate} and \ref{eqsse}). Moreover, since $1-\chi_{j,R'}=0$ on $Q_{R'm_j}$ and $m_j^{2j}\geq M_{2j}/(c_0H^{2j})$, $\forall j\in\ZZ_+$,
\begin{align*}
\sum_{j=1}^{\infty}(1-\chi_{j,R'}(w))|b_j(w)|
&
\leq
C'b(w)\sum_{j=1}^{\infty}(1-\chi_{j,R'}(w))h'^{2j}A_{2j}\langle
w\rangle^{-2j\rho}
\\
&
\leq
C'b(w)\sum_{j=1}^{\infty}h'^{2j}A_{2j}R'^{-2j\rho}m_j^{-2j\rho}
\\
&
\leq c_0^2C'b(w)\sum_{j=1}^{\infty}(h'HL/R'^\rho)^{2j}\leq
b(w)/3.
\end{align*}
Thus
\beqs
a'(w)=b(w)+\sum_{j=1}^{\infty}(-1)^j(1-\chi_{j,R'}(w))b_j(w)\geq
2b(w)/3>0,\quad \forall w\in\RR^{2d}.
\eeqs
Hence $(A_{a'}\varphi,\varphi)\geq0$, $\varphi\in\SSS^*(\RR^d)$ (cf. \cite[Proposition 3.4]{PP}). Observe that $A_{a'}=b^w+T'$, for some $*$-regularising operator $T'$. Since $b$ is real-valued, $(b^w\varphi,\varphi)\in\RR$, $\varphi\in\SSS^*(\RR^d)$, hence the same holds for $T'$ too. We conclude $(b^w\varphi,\varphi)\geq -(T'\varphi,\varphi)\geq
-\|T'\|_{\mathcal{L}_b(L^2(\RR^d))}\|\varphi\|^2_{L^2(\RR^d)}$.
\end{proof}

Using Proposition \ref{maximalreal}, Proposition
\ref{semi-bound} and Remark \ref{ktv957939}, we can prove the following
spectral result in the same way as in the proof of \cite[Theorem 4.2.9, p. 163]{NR}.

\begin{proposition}\label{discretness_of_spe}
Let $a\in\Gamma_{A_p,\rho}^{*,\infty}(\RR^{2d})$ be a hypoelliptic
real-valued symbol such that $|a(w)|\rightarrow \infty$ as
$|w|\rightarrow \infty$ and let $A$ be the unbounded
operator on $L^2(\RR^d)$ defined by $a^w$. Then the closure $\overline{A}$ of $A$
is a self-adjoint operator having spectrum given by a sequence of
real eigenvalues either diverging to $+\infty$ or to $-\infty$
according to the sign of $a$ at infinity. The eigenvalues have
finite multiplicities and the eigenfunctions belong to
$\SSS^*(\RR^d)$. Moreover, $L^2(\RR^d)$ has an orthonormal basis
consisting of eigenfunctions of $\overline{A}$.
\end{proposition}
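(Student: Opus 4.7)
The plan is to follow the classical argument for finite order Shubin operators (cf.\ \cite[Theorem 4.2.9]{NR}), replacing each ingredient by its infinite order counterpart assembled in the preceding sections. By replacing $a$ with $-a$ if necessary (which preserves membership in $\Gamma^{*,\infty}_{A_p,\rho}$ and hypoellipticity), I may assume $a(w)\to+\infty$ as $|w|\to\infty$; the other case follows by symmetry, yielding eigenvalues diverging to $-\infty$. Being bounded below, $a$ becomes positive after adding a sufficiently large constant, and the resulting symbol remains hypoelliptic. Proposition~\ref{semi-bound} applied to this shifted symbol, followed by one further constant shift, reduces the problem to the case where $a$ is positive hypoelliptic, $a(w)\to+\infty$, and
\[
(a^w\varphi,\varphi)\geq\|\varphi\|^2_{L^2(\RR^d)},\qquad \varphi\in\SSS^*(\RR^d).
\]
The overall shift merely translates the spectrum by a constant and preserves self-adjointness and finite multiplicities. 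Proposition~\ref{maximalreal} then asserts that $\overline{A}$ is self-adjoint, and the lower bound extends by density to its domain, so $\sigma(\overline{A})\subseteq[1,\infty)$; in particular $\overline{A}$ admits a bounded inverse $(\overline{A})^{-1}$ on $L^2(\RR^d)$.

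The key step is to prove that $(\overline{A})^{-1}$ is compact. Using the hypoelliptic parametrix construction from the Appendix I produce a symbol $b\in\Gamma^{*,\infty}_{A_p,\rho}(\RR^{2d})$ with $a\#b\sim\mathbf{1}$, giving $a^wb^w=I+K$ on $\SSS^*(\RR^d)$, where $K\in\mathcal{L}(\SSS'^*(\RR^d),\SSS^*(\RR^d))$ is $*$-regularising. Because $a(w)\to+\infty$, the hypoellipticity estimate (\ref{dd2}) transfers to $b$ in the form
\[
|D^\alpha_w b(w)|\leq Ch^{|\alpha|}A_\alpha\langle w\rangle^{-\rho|\alpha|},\qquad w\in\RR^{2d},\ \alpha\in\NN^{2d},
\]
so by Lemma~\ref{operatoronl2} the operator $b^w$ is bounded on $L^2(\RR^d)$. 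Moreover, the parametrix construction shows that $b$ itself inherits decay from $1/a$ and tends to zero at infinity; approximating $b$ by compactly supported truncations $b\chi(\cdot/R)$ (whose Weyl quantisations are Hilbert--Schmidt, their kernels lying in $L^2(\RR^{2d})$) and applying Lemma~\ref{operatoronl2} to the remainders exhibits $b^w$ as a norm limit of compact operators on $L^2(\RR^d)$, hence compact. The regularising $K$ has Schwartz kernel in $\SSS^*(\RR^{2d})\subseteq L^2(\RR^{2d})$, hence is Hilbert--Schmidt and compact on $L^2(\RR^d)$. Composing $a^wb^w=I+K$ on the left with $(\overline{A})^{-1}$ gives
\[
(\overline{A})^{-1}=b^w-(\overline{A})^{-1}K,
\]
displaying $(\overline{A})^{-1}$ as a sum of compact operators.

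The spectral theorem for positive compact self-adjoint operators applied to $(\overline{A})^{-1}$ yields a sequence of eigenvalues $\mu_n\searrow 0$ of finite multiplicity, with eigenfunctions forming an orthonormal basis of $L^2(\RR^d)$. Reciprocating and undoing the constant shifts, the spectrum of the original $\overline{A}$ is a real sequence $\lambda_n\to+\infty$ (respectively $\to-\infty$) of finite multiplicity with the same orthonormal basis of eigenfunctions $\varphi_n$. Finally, each $\varphi_n\in L^2(\RR^d)$ satisfies $a^w\varphi_n=\lambda_n\varphi_n$ in $\SSS'^*(\RR^d)$ by Proposition~\ref{maximalreal}, and since $a-\lambda_n$ inherits hypoellipticity from $a$ thanks to $a(w)\to+\infty$, the global regularity of hypoelliptic operators (Remark~\ref{ktv957939}) upgrades $\varphi_n$ to an element of $\SSS^*(\RR^d)$. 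The chief technical obstacle I anticipate is the compactness of $b^w$: in the finite order setting this is handled through Shubin--Sobolev spaces, but, as emphasised in the introduction, such intermediate spaces are unavailable in the infinite order framework, so one must argue directly from the decay of $b$ and its derivatives produced by the hypoellipticity hypothesis.
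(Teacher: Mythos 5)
Your proposal is correct and follows essentially the same route as the paper, which proves the proposition exactly as in the classical argument of Nicola--Rodino (Theorem 4.2.9) by combining Proposition \ref{maximalreal} (self-adjointness via maximal $=$ minimal realisation), Proposition \ref{semi-bound} (semi-boundedness after reducing to a positive symbol), and the compactness of the parametrix obtained by cutoff truncation, which is precisely Remark \ref{ktv957939}. The only point to tighten is the norm convergence of the truncations: Lemma \ref{operatoronl2} as stated yields uniform boundedness and only convergence in $\mathcal{L}_p(L^2(\RR^d),L^2(\RR^d))$, so you should either apply its boundedness part to the remainders rescaled by their (vanishing) constants -- which works since $|a(w)|^{-1}\to 0$ on the support of the remainders -- or simply quote Remark \ref{ktv957939}, where the paper gets $b_n^w\to q^w$ in operator norm from $b_n\to q$ in $\Gamma^0_{\rho}(\RR^{2d})$ via the finite-order calculus.
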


\section{The Weyl asymptotic formula for infinite order $\Psi$DOs. Part I: statements of the main results} \label{Section Weyl formulae, part I}

This section is dedicated to Weyl asymptotic formulae for a large class of infinite order hypoelliptic pseudo-differential operators. We state here our main results, their proofs are postponed to Section \ref{proofWeylasymp}, after obtaining some auxiliary results on the spectrum of the heat parametrix of positive hypoelliptic symbols.

We consider throughout this section a real-valued hypoelliptic symbol $a\in\Gamma_{A_p,\rho}^{*,\infty}(\RR^{2d})$ such that $a(w)\to\infty$ as $|w|\to\infty$. If we denote as $\overline{A}$ the closure of the unbounded operator on $L^2(\RR^d)$ induced by its Weyl quantisation $a^w$
then we can apply Proposition \ref{discretness_of_spe} to obtain
that the spectrum of the self-adjoint operator  $\overline{A}$ is
given by a sequence of real eigenvalues with finite multiplicities $\{\lambda_j\}_{j\in\mathbb{N}}$ which tends
to $\infty$, where multiplicities are taken into account and the sequence is arranged in non-decreasing order $\lambda_{0}\leq \lambda_{1}\leq \lambda_{2}\leq \dots\leq \lambda_{j}\leq \dots$. We denote the spectral counting function of the operator $A=a^{w}$ as
$$
N(\lambda)=\sum_{\lambda_j\leq \lambda}1=\#\{j\in\mathbb{N}|\,\lambda_{j}\leq \lambda\}.
$$

Our goal is to show later the following three theorems on spectral asymptotics. For these results, we will suppose that the symbol $a$ satisfies certain asymptotic bounds with respect to a comparison function $f$, which we assume throughout the rest of this section to be positive, strictly increasing, of ultrapolynomial growth of class $*$ on some interval $[Y,\infty)$, for some $Y>0$, and absolutely continuous on each compact subinterval of $[Y,\infty)$. Furthermore, we employ the notation
\begin{equation}
\label{defsigma}
\sigma(\lambda)=(f^{-1}(\lambda))^{2d} \quad \mbox{ for large } \lambda.
\end{equation}
\begin{theorem}
\label{Weylth1} Let $a\in\Gamma_{A_p,\rho}^{*,\infty}(\RR^{2d})$ hypoelliptic, let
 $f$ satisfy
\beq
\label{weyleq2}
\lim_{y\to\infty} \frac{yf'(y)}{f(y)}=\infty,
\eeq
and let $\Phi$ be a positive continuous function on the sphere $\mathbb{S}^{2d-1}$.
Suppose that for each $\varepsilon\in (0,1)$ there are positive constants $c_{\epsilon},C_{\epsilon},B_{\epsilon}>0$ such that
\beq
\label{weyleq3}
c_{\varepsilon}f((1-\varepsilon) r \Phi(\vartheta))\leq a(r\vartheta)\leq C_{\varepsilon}f((1+\varepsilon) r \Phi(\vartheta)),
\eeq
for all $r\geq B_{\varepsilon}$ and $\vartheta\in\mathbb{S}^{2d-1}$. Then,
\beq
\label{weyleq4}
\lim_{\lambda\to\infty}\frac{N(\lambda)}{\sigma(\lambda)}= \frac{\pi}{(2\pi)^{d+1}d}\int_{\mathbb{S}^{2d-1}}\frac{d\vartheta}{(\Phi(\vartheta))^{2d}}\:,
\eeq
\beq
\label{weyleq4.1}
\lambda_{j}=f\left(\gamma j^{\frac{1}{2d}}(1+o(1))\right) , \quad j\to\infty,
\eeq
with $\gamma=\sqrt{2\pi}\cdot (2d/\int_{\mathbb{S}^{2d-1}}(\Phi(\vartheta))^{-2d}d\vartheta)^{\frac{1}{2d}}$,
and, for each $h'<\gamma<h$,
\beq
\label{weyleq4.2}
\lim_{j\to\infty}\frac{\lambda_{j}}{f(h' j^{\frac{1}{2d}})}=\infty
\quad \mbox{and} \quad \lim_{j\to\infty}\frac{\lambda_{j}}{f(h j^{\frac{1}{2d}})}=0.
\eeq
\end{theorem}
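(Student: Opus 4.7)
The plan is to derive both spectral statements from the heat trace formula announced in the introduction, namely
\beqs
\sum_{j=0}^{\infty} e^{-t\lambda_j}=\frac{1}{(2\pi)^d}\int_{\RR^{2d}}e^{-ta(w)}dw+ O\left(\int_{\RR^{2d}}\frac{e^{-ta(w)/4}}{\langle w\rangle^{2\rho}}dw\right),\quad t\to 0^+,
\eeqs
and then applying a Tauberian theorem of Karamata type. First, I would rewrite the leading integral in polar coordinates $w=r\vartheta$ and use the two-sided bound (\ref{weyleq3}): for each $\varepsilon\in(0,1)$ the integral is squeezed, via the substitution $u=(1\pm\varepsilon)r\Phi(\vartheta)$, between two expressions of the form
\beqs
\int_{\mathbb{S}^{2d-1}}\frac{d\vartheta}{((1\mp\varepsilon)\Phi(\vartheta))^{2d}}\cdot\int_0^\infty e^{-sf(u)}u^{2d-1}du
\eeqs
(with $s=tc_{\varepsilon}$ or $tC_{\varepsilon}$), modulo a contribution from the bounded region $\{r<B_{\varepsilon}\}$ that is $O(1)$ in $t$.

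The core analytic step is then to evaluate $J(s):=\int_0^\infty e^{-sf(u)}u^{2d-1}du$ as $s\to 0^+$. After the change of variable $v=f(u)$ one recognises $J(s)=(2d)^{-1}\int_0^\infty e^{-sv}\sigma'(v)dv$, essentially the Laplace transform of $\sigma'$. To invoke Karamata's Tauberian theorem I would first check that $\sigma(\lambda)=(f^{-1}(\lambda))^{2d}$ is slowly varying at infinity: setting $v=f^{-1}(t\lambda)$, $u=f^{-1}(\lambda)$ and applying the mean value theorem to $\ln f$ gives
\beqs
|v-u|=|\ln t|\cdot f(\xi)/f'(\xi)=O\left(u/\eta(u)\right),
\eeqs
with $\eta(y):=yf'(y)/f(y)\to\infty$ by (\ref{weyleq2}); hence $v/u\to 1$, confirming slow variation of $\sigma$. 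Karamata's theorem then yields $J(s)\sim\sigma(1/s)/(2d)$ as $s\to 0^+$. The same analysis applied to the error term in the heat trace formula shows it is $O(\sigma_{2\rho}(1/t))$ with $\sigma_{2\rho}(\lambda):=(f^{-1}(\lambda))^{2d-2\rho}$; since $\sigma_{2\rho}(\lambda)/\sigma(\lambda)=f^{-1}(\lambda)^{-2\rho}\to 0$, this contribution is $o(\sigma(1/t))$ and therefore negligible.

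Assembling these estimates, using slow variation to absorb $c_{\varepsilon},C_{\varepsilon}$ into the asymptotic equivalence and letting $\varepsilon\to 0^+$ yields
\beqs
\sum_{j=0}^{\infty}e^{-t\lambda_j}\sim \frac{1}{(2\pi)^d\cdot 2d}\int_{\mathbb{S}^{2d-1}}\frac{d\vartheta}{\Phi(\vartheta)^{2d}}\cdot\sigma(1/t),\quad t\to 0^+.
\eeqs
Karamata's Tauberian theorem applied to the non-decreasing $N$ then delivers (\ref{weyleq4}) at once, the numerical constant matching via the identity $\pi/((2\pi)^{d+1}d)=1/((2\pi)^d\cdot 2d)$. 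The eigenvalue asymptotic (\ref{weyleq4.1}) follows from $N(\lambda_j-)\leq j\leq N(\lambda_j)$ together with slow variation of $\sigma$: taking $(2d)$-th roots in $\sigma(\lambda_j)=(j/C_{1})(1+o(1))$ with $C_{1}=\Theta/((2\pi)^{d}\cdot 2d)$ gives $f^{-1}(\lambda_j)=\gamma j^{1/(2d)}(1+o(1))$, whence $\lambda_j=f(\gamma j^{1/(2d)}(1+o(1)))$. Finally, (\ref{weyleq4.2}) is obtained from
\beqs
\ln f(hu)-\ln f(h'u)=\int_{h'u}^{hu}\frac{f'(y)}{f(y)}dy\geq K\ln(h/h'),
\eeqs
valid for any $K>0$ once $u$ is sufficiently large, again by (\ref{weyleq2}); thus $f(hu)/f(h'u)\to\infty$ for any $h>h'>0$, and combining this growth with the $(1+o(1))$-factor inside $f$ in (\ref{weyleq4.1}) yields both limits in (\ref{weyleq4.2}).

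I expect the main technical obstacle to be the uniform control of the sandwich in $\varepsilon$ when interchanging the limits $t\to 0^+$ and $\varepsilon\to 0^+$, together with the careful verification that the error term in the heat trace really is of lower order than the leading term in the regime $yf'(y)/f(y)\to\infty$; once these are in place, the remaining steps amount to standard manipulations in the theory of regular variation as developed in \cite{BGT,Korevaarbook}.
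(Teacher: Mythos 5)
Your proposal is correct and follows essentially the same route as the paper's proof: the heat-trace asymptotics of Theorem \ref{assymptoticsheatpara}, polar coordinates with the change of variables $\lambda=f((1\pm\varepsilon)r\Phi(\vartheta))$ and the $\varepsilon$-sandwich, slow variation of $\sigma$, the Karamata Tauberian theorem for (\ref{weyleq4}), the classical argument giving $\sigma(\lambda_j)\sim (2\pi)^d j/C'$ for (\ref{weyleq4.1}), and the observation $f(\alpha y)/f(\alpha' y)\to\infty$ for (\ref{weyleq4.2}). The only cosmetic differences are that you verify slow variation via the mean value theorem for $\ln f$ and invoke the Abelian half of Karamata's theorem, where the paper uses the representation of $\sigma$ through $\eta(\lambda)=\lambda\sigma'(\lambda)/\sigma(\lambda)\to 0$ and dominated convergence, and the $\limsup/\liminf$ argument (taking $t\to0^+$ before $\varepsilon\to0^+$) that you flag as the main obstacle is precisely how the paper dispenses with any interchange of limits.
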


Note that Theorem \ref{Weylth1} deals with operators which are truly of infinite order because integration of (\ref{weyleq2}) gives that $\langle w \rangle^{\beta}=o(a(w))$ for any $\beta>0$.

The next theorem gives the Weyl asymptotic formula for a wider class of finite order pseudo-differential operators than the one that is usually discussed in the literature, see e.g. \cite[Sect. 4.6]{NR}; in particular, our result is more general than \cite[Theorem 4.6.1, p. 196]{NR} (see Example \ref{Weylex2} below). The reader should also compare this with \cite[Theorem 30.1, p. 224]{Shubin}; we work with different assumptions than in the quoted result and, on the other hand, we give a more explicit result concerning the asymptotic behaviour of $N(\lambda)$.

\begin{theorem}
\label{Weylth2} Let $a\in\Gamma_{\rho}^m(\RR^{2d})$ be hypoelliptic (in the $\Gamma_{\rho}^m$-sense). Suppose that
\beq
\label{weyleq5}
\lim_{y\to\infty} \frac{yf'(y)}{f(y)}=\beta\in (0,\infty)
\eeq
exists. If
\beq
\label{weyleq6}
\lim_{r\to\infty} \frac{a(r\vartheta)}{f(r)}=\Phi(\vartheta)>0
\eeq
exists uniformly on $\vartheta\in\mathbb{S}^{2d-1}$, then
\beq
\label{weyleq7}
\lim_{\lambda\to\infty}\frac{N(\lambda)}{\sigma(\lambda)}= \frac{\pi}{(2\pi)^{d+1}d} \int_{\mathbb{S}^{2d-1}}\frac{d\vartheta}{(\Phi(\vartheta))^{2d/\beta}}
\eeq
and
\beq
\label{weyleq7.1}
\lambda_{j}\sim \left(\frac{\pi}{(2\pi)^{d+1}d} \int_{\mathbb{S}^{2d-1}}\frac{d\vartheta} {(\Phi(\vartheta))^{2d/\beta}}\right)^{-\frac{\beta}{2d}} f(j^{\frac{1}{2d}}) , \quad j\to\infty.
\eeq
\end{theorem}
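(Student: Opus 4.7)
The plan is to reduce (\ref{weyleq7}) to the asymptotics of the heat trace $\theta(t):=\sum_{j=0}^{\infty}e^{-t\lambda_j}$ as $t\to 0^+$ through the heat-parametrix formula announced in the introduction
$$\theta(t)=\frac{1}{(2\pi)^d}\int_{\RR^{2d}}e^{-ta(w)}dw+O\left(\int_{\RR^{2d}}\frac{e^{-ta(w)/4}}{\langle w\rangle^{2\rho}}dw\right),\quad t\to 0^+,$$
(to be established in Section \ref{Section heat parametrix}), and then to invoke a Tauberian theorem of Karamata type. By Karamata's characterisation of regularly varying functions, hypothesis (\ref{weyleq5}) is equivalent to $f$ being regularly varying at infinity of positive index $\beta$; hence $\sigma(\lambda)=(f^{-1}(\lambda))^{2d}$ is regularly varying of index $2d/\beta$, which is precisely the required setup for Karamata's Tauberian theorem.

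The heart of the proof is the asymptotic evaluation of the leading integral. In polar coordinates $w=r\vartheta$, performing the change of variables $s=f(r)$ for fixed $\vartheta\in\mathbb{S}^{2d-1}$ (so that $r^{2d-1}dr=(2d)^{-1}d\sigma(s)$) and using (\ref{weyleq6}) uniformly in $\vartheta$ one obtains
$$\int_{\RR^{2d}}e^{-ta(w)}dw\sim \frac{1}{2d}\int_{\mathbb{S}^{2d-1}}d\vartheta\int_0^{\infty}e^{-t\Phi(\vartheta)s}d\sigma(s),\quad t\to 0^+.$$
The inner Laplace--Stieltjes integral is treated by Karamata's Abelian theorem, giving $\Gamma(1+2d/\beta)\sigma(1/(t\Phi(\vartheta)))(1+o(1))$; regular variation of $\sigma$ then yields $\sigma(1/(t\Phi(\vartheta)))\sim \Phi(\vartheta)^{-2d/\beta}\sigma(1/t)$. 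Integrating over the sphere and dividing by $(2\pi)^d$ produces
$$\frac{1}{(2\pi)^d}\int_{\RR^{2d}}e^{-ta(w)}dw\sim C_0\,\Gamma(1+2d/\beta)\,\sigma(1/t),$$
with $C_0=\frac{1}{2d(2\pi)^d}\int_{\mathbb{S}^{2d-1}}\Phi(\vartheta)^{-2d/\beta}d\vartheta=\frac{\pi}{(2\pi)^{d+1}d}\int_{\mathbb{S}^{2d-1}}\Phi(\vartheta)^{-2d/\beta}d\vartheta$, exactly the constant in (\ref{weyleq7}). The main obstacle here is justifying the Abelian step \emph{uniformly} in $\vartheta$, which is needed to exchange limit and sphere integration; this should follow from the uniformity in $\vartheta$ of (\ref{weyleq6}) combined with the strict positivity and continuity of $\Phi$ on the compact sphere.

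The error integral is handled similarly: the additional factor $\langle w\rangle^{-2\rho}$ becomes (asymptotically) an extra weight $s^{-2\rho/\beta}$ inside the Laplace--Stieltjes integral, and Karamata's Abelian theorem yields the bound $O((f^{-1}(1/t))^{2d-2\rho})$, which is $o(\sigma(1/t))$ since $f^{-1}(1/t)\to\infty$. Therefore $\theta(t)\sim C_0\,\Gamma(1+2d/\beta)\,\sigma(1/t)$ as $t\to 0^+$, and since $N$ is non-decreasing and $\sigma$ is regularly varying of positive index $2d/\beta$, Karamata's Tauberian theorem \cite{BGT} immediately yields (\ref{weyleq7}).

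Finally, (\ref{weyleq7.1}) follows by inversion. Since $N(\lambda_j)=j+O(1)$ and $N(\lambda)\sim C_0\sigma(\lambda)$, one obtains $\sigma(\lambda_j)\sim j/C_0$, hence $\lambda_j\sim \sigma^{-1}(j/C_0)=f((j/C_0)^{1/(2d)})$. Regular variation of $f$ of index $\beta$ then converts this into $\lambda_j\sim C_0^{-\beta/(2d)}f(j^{1/(2d)})$, which is exactly (\ref{weyleq7.1}).
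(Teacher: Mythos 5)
Your proposal follows essentially the same route as the paper's proof: the heat-trace asymptotics of Theorem \ref{assymptoticsheatpara} (valid in the finite order case by Remark \ref{forthefiniteord}), polar coordinates with the substitution $\lambda=f(r)$ together with an $\varepsilon$-sandwich coming from the uniformity in (\ref{weyleq6}), the regular variation of $\sigma$ (your ``Abelian step'' is exactly the paper's dominated-convergence computation producing $\Gamma(1+2d/\beta)$), the Karamata Tauberian theorem, and then the classical inversion argument plus regular variation of $f$ for (\ref{weyleq7.1}). The only minor slip is the claim $N(\lambda_j)=j+O(1)$, which can fail when the multiplicities are unbounded; however, the standard argument the paper cites (\cite[Proposition 4.6.4, p. 198]{NR}), using $j+1\le N(\lambda_j)$, $N(\lambda)\le j$ for $\lambda<\lambda_j$, and the continuity of $\sigma$, still gives $\sigma(\lambda_j)\sim j/C_0$, so your conclusion stands unchanged.
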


We will derive the following ``geometric'' version of Theorems \ref{Weylth1} and \ref{Weylth2} where the asymptotic behaviour of $N$ is given in terms of the symbol.

\begin{corollary}
\label{Weylformulac1} Suppose that the symbol $a$ satisfies either the assumptions of Theorem \ref{Weylth1} or those of Theorem \ref{Weylth2}. Then,
\beq
\label{weyleqextrac1}
N(\lambda)\sim \frac{1}{(2\pi)^{d}} \int_{a(w) <\lambda}dw,\quad \lambda\to\infty.
\eeq
\end{corollary}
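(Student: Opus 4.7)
The plan is to compute $V(\lambda):=(2\pi)^{-d}\int_{a(w)<\lambda}dw$ asymptotically by passing to polar coordinates and to verify that it has exactly the same asymptotic behavior as the expression given for $N(\lambda)$ in the corresponding theorem. Writing $w=r\vartheta$ with $r>0$ and $\vartheta\in\mathbb{S}^{2d-1}$,
\[
V(\lambda) = \frac{1}{(2\pi)^d}\int_{\mathbb{S}^{2d-1}}\int_0^\infty \mathbf{1}_{\{a(r\vartheta)<\lambda\}}\, r^{2d-1}\, dr\, d\vartheta.
\]
For the setting of Theorem \ref{Weylth1}, I would fix $\varepsilon\in(0,1)$ and apply (\ref{weyleq3}) to derive, for every $r\geq B_\varepsilon$ and every $\vartheta\in\mathbb{S}^{2d-1}$, the chain of implications
\[
r<R_-^\varepsilon(\lambda,\vartheta):=\frac{f^{-1}(\lambda/C_\varepsilon)}{(1+\varepsilon)\Phi(\vartheta)}\;\Longrightarrow\;a(r\vartheta)<\lambda\;\Longrightarrow\;r<R_+^\varepsilon(\lambda,\vartheta):=\frac{f^{-1}(\lambda/c_\varepsilon)}{(1-\varepsilon)\Phi(\vartheta)}.
\]
Since $a$ is continuous and hence bounded on the compact ball $\{|w|\leq B_\varepsilon\}$, for $\lambda$ large the integrand is identically $1$ on $[0,B_\varepsilon]$, so the inner radial integral is sandwiched by $(2d)^{-1}(R_-^\varepsilon(\lambda,\vartheta))^{2d}$ and $(2d)^{-1}(R_+^\varepsilon(\lambda,\vartheta))^{2d}$ uniformly in $\vartheta$.

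The analytical key is that hypothesis (\ref{weyleq2}) forces $f$ to grow faster than every polynomial (indeed $f$ is rapidly varying of index $+\infty$ in the Karamata sense), equivalently that $f^{-1}$ is slowly varying at infinity. By Karamata's uniform convergence theorem, $f^{-1}(\kappa\lambda)/f^{-1}(\lambda)\to 1$ as $\lambda\to\infty$, uniformly for $\kappa$ ranging over any compact subset of $(0,\infty)$. Since $\Phi$ is continuous and strictly positive on the compact sphere, this gives $(R_\pm^\varepsilon(\lambda,\vartheta))^{2d}/\sigma(\lambda)\to ((1\mp\varepsilon)\Phi(\vartheta))^{-2d}$ uniformly on $\mathbb{S}^{2d-1}$. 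Integrating the sandwich over the sphere, dividing by $\sigma(\lambda)$, taking $\liminf$ and $\limsup$ as $\lambda\to\infty$, and finally letting $\varepsilon\to 0^+$, I expect to obtain
\[
\lim_{\lambda\to\infty}\frac{V(\lambda)}{\sigma(\lambda)}=\frac{1}{(2\pi)^d\cdot 2d}\int_{\mathbb{S}^{2d-1}}\frac{d\vartheta}{\Phi(\vartheta)^{2d}}=\frac{\pi}{(2\pi)^{d+1}d}\int_{\mathbb{S}^{2d-1}}\frac{d\vartheta}{\Phi(\vartheta)^{2d}},
\]
which by (\ref{weyleq4}) is exactly $\lim_{\lambda\to\infty}N(\lambda)/\sigma(\lambda)$; this proves (\ref{weyleqextrac1}) in this case.

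The argument for Theorem \ref{Weylth2} runs in parallel. The uniform bound $(1-\varepsilon)\Phi(\vartheta)f(r)\leq a(r\vartheta)\leq(1+\varepsilon)\Phi(\vartheta)f(r)$ valid for $r\geq B_\varepsilon$ (from (\ref{weyleq6})) provides radial cutoffs $R_\pm^\varepsilon(\lambda,\vartheta)=f^{-1}(\lambda/((1\mp\varepsilon)\Phi(\vartheta)))$, and hypothesis (\ref{weyleq5}) makes $f$ regularly varying of index $\beta$ and thus $f^{-1}$ regularly varying of index $1/\beta$; Karamata's uniform convergence theorem then yields $(R_\pm^\varepsilon(\lambda,\vartheta))^{2d}/\sigma(\lambda)\to((1\mp\varepsilon)\Phi(\vartheta))^{-2d/\beta}$ uniformly on $\mathbb{S}^{2d-1}$, reproducing the constant appearing in (\ref{weyleq7}) after $\varepsilon\to 0^+$. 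The main technical point to handle is the order of limits: since $c_\varepsilon,C_\varepsilon,B_\varepsilon$ may degenerate as $\varepsilon\to 0^+$, one must keep $\varepsilon$ fixed when sending $\lambda\to\infty$ (exploiting the uniform Karamata behavior of $f^{-1}$), and only afterwards let $\varepsilon\to 0^+$ so that the cosmetic factors $(1\pm\varepsilon)^{-2d}$ or $(1\pm\varepsilon)^{-2d/\beta}$ disappear from the leading constants.
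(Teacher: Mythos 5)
Your proposal is correct and follows essentially the same route as the paper: pass to polar coordinates, use the elliptic bounds \eqref{weyleq3} (resp.\ \eqref{weyleq6}) to sandwich the radial integration range by $f^{-1}$-cutoffs, invoke the (uniform) regular/slow variation of $\sigma=(f^{-1})^{2d}$ established in \eqref{weyleq15}, and only then let $\varepsilon\to0^{+}$, matching the constant in \eqref{weyleq4} (resp.\ \eqref{weyleq7}). The paper merely writes out the $\limsup$ half in the Theorem \ref{Weylth1} case and leaves the rest as analogous, so your two-sided, two-case version is just a more explicit rendering of the same argument.
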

If one is only interested in upper $O$-estimates on $N$, the next theorem gives such bounds under much weaker assumptions on the symbol.

\begin{theorem}
\label{Weylth3} Let $a\in\Gamma_{A_p,\rho}^{*,\infty}(\RR^{2d})$ be hypoelliptic such that
\beq
\label{weyleq9}
Cf(|w|)\leq a(w) \quad \mbox{for all }|w|\geq B,
\eeq
for some $C,B>0$. If $f$ satisfies
\beq
\label{weyleq8}
0<\beta'=\liminf_{y\to\infty} \frac{yf'(y)}{f(y)},
\eeq
then,
\beq
\label{weyleq10}
\limsup_{\lambda \to\infty}\frac{N(\lambda)}{\sigma(\lambda)}\leq \frac{e}{2^d d!}\left(1+\frac{\Gamma(1+2d/\beta')}{C^{2d/\beta'}}\right)
\eeq
and for each $0<h< \sqrt{2}C^{1/\beta'}e^{-1/(2d)} d!^{1/(2d)}(C^{2d/\beta'}+\Gamma(1+2d/\beta'))^{-1/(2d)}$
\beq
\label{weyleq10.1}
\lambda_{j} \geq f(h j^{\frac{1}{2d}}), \quad j\geq j_h.
\eeq
Furthermore, if $f$ satisfies
\beq\label{weyleq811}
\lim_{y\to\infty} \frac{yf'(y)}{f(y)}=\beta'\in (0,\infty],
\eeq
then,
\beq\label{weyleq1011}
\limsup_{\lambda\rightarrow\infty}\frac{N(\lambda)}{\sigma(\lambda)}\leq \frac{\Gamma(1+2d/\beta')\:e}{2^{d} C^{2d/\beta'}d!} \quad \left(=\frac{e}{2^{d}d!} \quad \mbox{ if }\beta'=\infty\right)
\eeq
and the bound (\ref{weyleq10.1}) holds for each $0<h<\sqrt{2}C^{1/\beta'} d!^{1/(2d)}(e\,\Gamma(1+2d/\beta'))^{-1/(2d)}$ ($=\sqrt{2}(d!/e)^{1/(2d)}$ if $\beta'=\infty$).
\end{theorem}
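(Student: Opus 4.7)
My starting point is the heat trace asymptotic
\begin{equation*}
\theta(t):=\sum_{j\in\NN}e^{-t\lambda_{j}}=\frac{1}{(2\pi)^{d}}\int_{\RR^{2d}}e^{-ta(w)}dw+O\!\left(\int_{\RR^{2d}}\frac{e^{-ta(w)/4}}{\langle w\rangle^{2\rho}}dw\right),\quad t\to 0^{+},
\end{equation*}
established in Section \ref{Section heat parametrix}, together with the elementary Tauberian inequality $N(\lambda)\leq e^{t\lambda}\theta(t)$ valid for every $t>0$ (obtained from $\sum_{\lambda_{j}\leq\lambda}1\leq\sum_{j}e^{t(\lambda-\lambda_{j})}$). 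Specialising to $t=1/\lambda$ gives $N(\lambda)\leq e\,\theta(1/\lambda)$, so the problem reduces to estimating $\int_{\RR^{2d}}e^{-a(w)/\lambda}dw$ as $\lambda\to\infty$ using only the lower bound (\ref{weyleq9}), plus verifying that the error integral is $o(\sigma(\lambda))$.

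\textbf{The liminf case.} For (\ref{weyleq10}) I split at $R(\lambda)=f^{-1}(\lambda)$: the ball $|w|\leq R(\lambda)$ contributes at most its volume $\pi^{d}\sigma(\lambda)/d!$, plus an $O(1)$ term from the compact region where $a$ may be negative. Passing to polar coordinates on $|w|>R(\lambda)$ and substituting $u=r/R(\lambda)$, for any $\beta''<\beta'$ condition (\ref{weyleq8}) yields $yf'(y)/f(y)\geq\beta''$ for $y$ large, and integrating this on $[R(\lambda),R(\lambda)u]$ gives $f(R(\lambda)u)/\lambda\geq u^{\beta''}$ for $u\geq 1$ and $\lambda$ large, whence
\begin{equation*}
\int_{R(\lambda)}^{\infty}e^{-Cf(r)/\lambda}r^{2d-1}dr\leq\sigma(\lambda)\int_{0}^{\infty}e^{-Cu^{\beta''}}u^{2d-1}du=\sigma(\lambda)\frac{\Gamma(1+2d/\beta'')}{2d\,C^{2d/\beta''}}.
\end{equation*}
Combining both pieces with $|\mathbb{S}^{2d-1}|/(2d(2\pi)^{d})=1/(2^{d}d!)$ and letting $\beta''\nearrow\beta'$ by continuity of $\beta\mapsto\Gamma(1+2d/\beta)/C^{2d/\beta}$ yields (\ref{weyleq10}).

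\textbf{The limit case, the error term, and the eigenvalue bound.} Under the stronger hypothesis (\ref{weyleq811}), $f$ is regularly varying of index $\beta'$ and $f^{-1}$ of index $1/\beta'$ (with the case $\beta'=\infty$ corresponding to rapid variation). I take the sharper radius $R(\lambda)=f^{-1}(\lambda/C)$, so that $Cf(R(\lambda))/\lambda=1$, and substitute $u=r/R(\lambda)$ on the \emph{entire} polar integral; dominated convergence using $f(R(\lambda)u)/f(R(\lambda))\to u^{\beta'}$ (with $u^{\infty}=0$ for $u<1$ and $u^{\infty}=\infty$ for $u>1$) produces the equivalent $|\mathbb{S}^{2d-1}|R(\lambda)^{2d}\Gamma(1+2d/\beta')/(2d)$. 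Regular variation of $f^{-1}$ then gives $R(\lambda)^{2d}\sim\sigma(\lambda)/C^{2d/\beta'}$, yielding (\ref{weyleq1011}). The error integral is handled identically, producing $O((f^{-1}(4\lambda))^{2d-2\rho})$, and since $f^{-1}(4\lambda)\leq 4^{1/\beta''}f^{-1}(\lambda)$ (again by integrating $yf'/f\geq\beta''$), this is $o(\sigma(\lambda))$ because $\rho>0$. Finally, (\ref{weyleq10.1}) follows by a routine inversion: if $\limsup_{\lambda\to\infty}N(\lambda)/\sigma(\lambda)\leq K$ and $h<K^{-1/(2d)}$, pick $K'\in(K,h^{-2d})$; at $\lambda=f(hj^{1/(2d)})$ one has $N(\lambda)\leq K'\sigma(\lambda)=K'h^{2d}j<j$ for $j$ large, forcing $\lambda_{j}\geq f(hj^{1/(2d)})$, and simplifying $K^{-1/(2d)}$ in each of the two cases reproduces the stated threshold for $h$.

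\textbf{Main obstacle.} The central difficulty is keeping the constants sharp while handling the passages to the limit ($\beta''\nearrow\beta'$, and dominated convergence with rapid variation when $\beta'=\infty$) and controlling the heat asymptotic's $O$-error uniformly through the substitutions. The two bounds differ by the additive ``$1$'' precisely because the crude volume-plus-tail split forced by having only liminf information is suboptimal, whereas the regular variation available under (\ref{weyleq811}) permits the optimal choice $R(\lambda)=f^{-1}(\lambda/C)$ that absorbs the interior term into the single Gamma factor.
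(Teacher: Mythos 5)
Your proposal is correct and follows essentially the same route as the paper: the heat-trace formula of Theorem \ref{assymptoticsheatpara} combined with the crude Tauberian bound $N(\lambda)\leq e\,\theta(1/\lambda)$, polar coordinates with the lower bound (\ref{weyleq9}), a ball/tail split controlled through the regular-variation consequences of (\ref{weyleq8}) (your split at $r=f^{-1}(\lambda)$ and exponent $\beta''<\beta'$ correspond exactly, after the substitution $\lambda'=Ctf(r)$, to the paper's split of $\int_0^\infty e^{-\lambda'}\sigma(\lambda'/(Ct))d\lambda'$ at $\lambda'=C$ with $\nu=2d/\beta''$), and dominated convergence under (\ref{weyleq811}) for the sharper constant, followed by the standard inversion for (\ref{weyleq10.1}). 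No gaps worth noting.
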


\begin{remark} If $\limsup_{y\to\infty} yf'(y)/f(y)<\infty$, Theorem \ref{Weylth3} is also valid for $a\in\Gamma_{\rho}^m(\RR^{2d})$ that is $\Gamma_{\rho}^m$-hypoelliptic and satisfies (\ref{weyleq9}), as the proof given in Section \ref{proofWeylasymp} shows. Here we get that $\lambda_{j}$ is bounded from below by a constant multiple of $f(j^{\frac{1}{2d}})$ for $\lambda_j> 0$. In particular, this case applies to $f(y)=y^{\beta'}$, where we obtain $N(\lambda)=O(\lambda^{2d/\beta'})$ and $\lambda_{j}\geq h^{\beta'} j^{\beta'/(2d)}$, $j\geq j_{h}$, with the constants as in Theorem \ref{Weylth3} (see also Example \ref{Weylex2}).
\end{remark}

The rest of this section is devoted to some illustrative examples. The asymptotic formulae from Examples \ref{Weylex1} and \ref{Weylex3} prove a result that one might expect: the eigenvalues of a truly infinite order operator are ``very sparse''.

\begin{example} \label{Weylex1} If $f(y)=e^{(hy)^{1/s}}$ where $s>1$, then $\sigma(\lambda)\sim h^{-2d}(\ln\lambda)^{2ds}$ and, when $\Phi(\vartheta)=1$ Theorem \ref{Weylth1} delivers
\beq\label{formulaforhypop}
N(\lambda)\sim 2^{-d}h^{-2d}d!^{-1} (\ln\lambda)^{2ds}, \quad \lambda \to\infty,
\eeq
and
\beq\label{formulaforhypop2}
\lambda_{j}=\exp\left( 2^{1/(2s)}h^{1/s}d!^{1/(2ds)}j^{1/(2ds)}\left( 1+o(1)\right)\right),  \quad j \to\infty,
\eeq
because here  $\gamma=(d!)^{1/(2d)}\sqrt{2}$.

Let us give an example of a symbol that satisfies the assumptions in Theorem \ref{Weylth1} with this $f$. Let
$$a(w)=e^{(h\langle w\rangle)^{1/s}}+a_1(w),$$
where $s\geq 1/(1-\rho)$ is such $e^{\langle w\rangle^{1/s}}$ is of ultrapolynomial growth of class $*$ (i.e. $M_{p}\subset p!^{s}$ and $M_{p}\prec p!^{s}$, respectively) and $a_1$ is real-valued and satisfies the following estimate: for every $h'>0$ there exists $C'>0$ (resp. there exist $h',C'>0$) such that
\beq\label{kvr995551}
|D^{\alpha}_w a_1(w)|\leq C'h'^{|\alpha|}A_{\alpha}e^{(h\langle w\rangle)^{1/s}}\langle w\rangle^{-\rho(|\alpha|+1)},\,\, \forall w\in\RR^{2d},\, \forall \alpha\in\NN^{2d}.
\eeq
Clearly $a$ satisfies the bound
\beq\label{kts951317}
C_1e^{(h|w|)^{1/s}}\leq a(w)\leq C_2e^{(h|w|)^{1/s}},\,\, \mbox{for large}\,\, |w|.
\eeq
Furthermore, since $|D^{\alpha}_w\langle w\rangle|\leq 2^{|\alpha|+1}|\alpha|!\langle w\rangle^{1-|\alpha|}$, for all $w\in\RR^{2d}$, $\alpha\in\NN^{2d}$, \cite[Remark 7.6]{PP1} proves that $e^{(h\langle w\rangle)^{1/s}}\in \Gamma_{A_p,\rho}^{*,\infty}(\RR^{2d})$ and it is hypoelliptic. Because of (\ref{kvr995551}) and (\ref{kts951317}), $a$ is also a hypoelliptic symbol in $\Gamma_{A_p,\rho}^{*,\infty}(\RR^{2d})$. Hence, the asymptotic formulae (\ref{formulaforhypop}) and (\ref{formulaforhypop2}) for $N(\lambda)$ and the eigenvalues hold true for $a^w=(e^{(h\langle \cdot\rangle)^{1/s}})^w+a_1^w$. We remark that given any $s>1$ the conditions are always met with $\nu/l\leq\rho\leq 1-1/s$, $M_{p}= p!^{l}$, and $A_{p}=p!^{\nu}$ if we choose the parameters $l$ and $\nu$ such that $1<\nu<l<s$ and $\nu/l\leq 1-1/s$.

\indent More generally, let $f(y)=\tilde{M}(hy)$, where $\tilde{M}$ is the associated function of a sequence $M_{p}\subset \tilde{M}_{p}$ (resp. $M_{p}\prec \tilde{M}_{p} $), and $\tilde{M}_p$ satisfies $(M.1)$. Then \cite{Komatsu1} $yf'(y)/f(y)=\tilde{m}(hy)\to\infty$. In this case, when $\Phi(\vartheta)=1$ we obtain
\beq\label{asymforophyp}
N(\lambda)\sim 2^{-d}h^{-2d}d!^{-1} (\tilde{M}^{-1}(\ln \lambda))^{2d} , \quad \lambda \to\infty.
\eeq
Similarly for the upper bound from Theorem \ref{Weylth3}. In particular, if there exist $C,h>0$ such that $Ce^{\tilde{M}(h|w|)}\leq a(w)$, for large $|w|$, one always has the $O$-bound
$$
N(\lambda)= O((\tilde{M}^{-1}(\ln \lambda))^{2d}).
$$
If $M_p\prec \tilde{M}_p$ and there exists $B>0$ such that for every $h>0$ there exists $c>0$ such that $ce^{\tilde{M}(h|w|)}\leq a(w)$, $\forall|w|\geq B$, then we have the effective estimate
$$
N(\lambda)\leq 2(2h^{2})^{-d}(e/d!) (\tilde{M}^{-1}(\ln \lambda))^{2d}
$$ for large enough $\lambda\geq \lambda_h$, which yields the
 $o$-bound
\[
N(\lambda)= o((\tilde{M}^{-1}(\ln \lambda))^{2d}), \quad \lambda\to\infty.
\]
\end{example}

\begin{example}
\label{Weylex3} We present in this example another nontrivial instance of a hypoelliptic pseudo-differential operator of infinite order. Let $\nu, l, s$ be three positive numbers such that $1<\nu<l<s$ and $\nu/l\leq 1-1/s$. Consider the entire function
$$
P(z)=1+\sum_{n=1}^{\infty} \frac{(hz)^{n}}{n^{sn}}, \quad z\in\mathbb{C},
$$
where $h$ is a positive constant, and the symbol
$$
a(w)=P(\langle w\rangle), \quad w\in\mathbb{R}^{2d}.
$$
It is shown in \cite[Sect. 3] {CPP1} that  $a\in\Gamma_{A_p,\rho}^{*,\infty}(\RR^{2d})$ is hypoelliptic, where $\nu/l\leq\rho\leq 1-1/s$, $M_{p}= p!^{l}$, and $A_{p}=p!^{\nu}$. Denote as $N$ the spectral counting function of the Weyl quantisation of $a$ and let $\{\lambda_{j}\}_{j=0}^{\infty}$ be its sequence of eigenvalues. We will show that
\begin{equation}
\label{eqexwa2}
N(\lambda)\sim \frac{e^{2ds}}{2^dh^{2d}s^{2ds}d!} (\ln \lambda)^{2d s}
\end{equation}
and
\begin{equation}
\label{eqexwa3}
\lambda_{j}=\exp\left(e^{-1}s\cdot 2^{1/(2s)}h^{1/s}d!^{1/(2ds)}j^{1/(2ds)}\left( 1+o(1)\right)\right), \quad j\to\infty.
\end{equation}

We start by noticing that, given any fixed $0<\varepsilon<1$, we have bounds
$$
c'_{\varepsilon} P((1-\varepsilon)|w|) \leq a(w)\leq C'_{\varepsilon} P((1+\varepsilon)|w|)
$$
for sufficiently large $w$. Next, observe that
\begin{equation}
\label{Gelfand-Shilovinequality}
e^{-s} \exp\left(\frac{sy^{1/s}}{e}\right) \leq \sup_{p\in \mathbb{Z}_{+}} \frac{y^{p}}{p^{sp}}\leq e^s \exp\left(\frac{sy^{1/s}}{e}\right), \quad y\geq e^{s},
\end{equation}
because the only critical point of $g(t)=t\ln y - st \ln t$ lies at $t= e^{-1}y^{1/s}$. Thus, given any arbitrary $0<\varepsilon<1$, we obtain the bounds
$$
e^{-s} \exp\left(\frac{s(hy)^{1/s}}{e}\right) \leq P(y)\leq \frac{((1+\varepsilon)e)^{s}}{(\varepsilon+1)^{s}-1} \exp\left(\frac{(1+\varepsilon)s(hy)^{1/s}}{e}\right), \quad y\geq e^{s}/h.
$$
It then follows that the radial symbol $a$ satisfies (\ref{weyleq3}) with $f(y)=\exp(e^{-1}s(hy)^{1/s})$ and the constant function $\Phi(\vartheta)=1$. Theorem \ref{Weylth1} immediately yields  \eqref{eqexwa2} and \eqref{eqexwa3}.
\end{example}

\begin{example}
\label{Weylex2} If $f(y)=y^{\beta}\ln^{\alpha}y$, where $\beta>0$, we have that $yf'(y)/f(y)\to\beta$ and $\sigma(\lambda)\sim (\beta^{\alpha} \lambda\ln^{\alpha} \lambda)^{1/\beta} $. Therefore, the conclusion of Theorem \ref{Weylth2} reads in this case
$$N(\lambda)\sim \frac{(\beta^{\alpha}\lambda)^{2d/\beta}\pi}{(2\pi)^{d+1}d\ln^{2d\alpha/\beta}\lambda} \int_{\mathbb{S}^{2d-1}}\frac{d\vartheta}{(\Phi(\vartheta))^{2d/\beta}}, \quad \lambda \to\infty,$$
and
$$
\lambda_{j}\sim (2d)^{\frac{\beta-\alpha}{2d}}(2\pi)^{\frac{\beta}{2}}\left(\int_{\mathbb{S}^{2d-1}}\frac{d\vartheta}{(\Phi(\vartheta))^{2d/\beta}}\right)^{-\frac{\beta}{2d}} j^{\frac{\beta}{2d}}\ln^{\frac{\alpha}{2d}}j , \quad j\to\infty.
$$
Likewise for the upper bound from Theorem \ref{Weylth3}.
\end{example}

\section{The spectrum of the heat parametrix}\label{Section heat parametrix}

Throughout this section we assume $a$ is a hypoelliptic real-valued symbol in $\Gamma^{*,\infty}_{A_p,\rho}(\RR^{2d})$ such that $a(w)/\ln|w|\to \infty$ as $|w|\to\infty$. There exists $B\geq1$ such that the hypoellipticity condition (\ref{dd2}) for $a$ holds on $Q^c_B$ and $a(w)>0$, $\forall w\in Q^c_B$. Pick $\tilde{\chi}\in\DD^{(A_p)}(\RR^{2d})$ (resp. $\tilde{\chi}\in\DD^{\{A_p\}}(\RR^{2d})$) such that $0\leq \tilde{\chi}\leq 1$, $\tilde{\chi}=1$ on $Q_{B_1}$, for $B_1>B$, and $\tilde{\chi}=0$ on the complement of a small neighbourhood of $\overline{Q_{B_1}}$. Then $b=(1-\tilde{\chi})a+\tilde{\chi}$ is positive on the whole $\RR^{2d}$ and, in fact, it is a hypoelliptic symbol in $\Gamma^{*,\infty}_{A_p,\rho}(\RR^{2d})$ for which the hypoellipticity condition (\ref{dd2}) holds globally on $\RR^{2d}$.

\subsection{The heat parametrix of positive hypoelliptic symbols}\label{sub heat parametrix}

For the symbol $b$ constructed above, we can apply the theory given in \cite[Subsection 7.2]{PP1} for the construction of the heat parametrix. We have the following series of results.\\
\indent There exist $u_j(t,w)\in C^{\infty}(\RR\times\RR^{2d})$, $j\in\NN$, such that $u_0(t,w)=e^{-tb(w)}$ and the following results hold.

\begin{lemma}\label{est_heat_ker}$($\cite[Lemma 7.8]{PP1}$)$
For every $h>0$ there exists $C>0$ (resp. there exist $h,C>0$) such that
\beqs
|D^n_tD^{\alpha}_wu_j(t,w)|\leq Cn!h^{|\alpha|+2j}A_{|\alpha|+2j}\left(b(w)\right)^n\langle w\rangle^{-\rho(|\alpha|+2j)}e^{-\frac{t}{4}b(w)},
\eeqs
for all $\alpha\in\NN^{2d}$, $n\in\NN$, $(t,w)\in[0,\infty)\times\RR^{2d}$.
\end{lemma}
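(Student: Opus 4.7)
My plan is to construct the $u_j$ recursively as symbols solving transport equations derived from the Weyl composition $b\#u$, and then prove the bound by induction on $j$. Expanding $b\#u$ formally via the sharp product and collecting the terms of order $-2j\rho$, the formal equation $\partial_t u + b\#u\sim 0$ with $u(0,\cdot)=\mathbf{1}$ becomes the hierarchy $\partial_t u_0 + bu_0 = 0$, $u_0(0,\cdot)=1$, giving $u_0(t,w)=e^{-tb(w)}$, and for $j\geq 1$, $\partial_t u_j + bu_j = -R_j$, $u_j(0,\cdot)=0$, where
$R_j(\tau,w)=\sum_{k=0}^{j-1}\sum_{|\alpha+\beta|=j-k}\frac{(-1)^{|\beta|}}{\alpha!\beta!\,2^{j-k}}\partial^\alpha_\xi D^\beta_x b(w)\cdot\partial^\beta_\xi D^\alpha_x u_k(\tau,w)$.
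Duhamel then yields $u_j(t,w)=-\int_0^t e^{-(t-\tau)b(w)}R_j(\tau,w)\,d\tau$.

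For the base case $j=0$, time derivatives are immediate: $D^n_t e^{-tb(w)}=(-b(w))^n e^{-tb(w)}$. For the spatial derivatives I would apply Faà di Bruno to $D^\alpha_w[g(b(w))]$ with $g(x)=x^n e^{-tx}$, producing $\sum_{k=1}^{|\alpha|} g^{(k)}(b(w))B_{\alpha,k}$, where $B_{\alpha,k}$ is a Bell-type polynomial in derivatives of $b$. The explicit formula $g^{(k)}(x)=\sum_{l=0}^{\min(k,n)}\binom{k}{l}\frac{n!}{(n-l)!}x^{n-l}(-t)^{k-l}e^{-tx}$, together with the absorption $(tb(w))^{k-l}e^{-tb(w)/4}\leq\bigl(4(k-l)/(3e)\bigr)^{k-l}$, isolates the desired factors $n!$ and $b(w)^n e^{-tb(w)/4}$. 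The derivatives of $b$ inside $B_{\alpha,k}$ are controlled by the global hypoellipticity estimate $|D^\gamma b|\leq Ch_1^{|\gamma|}A_{|\gamma|}b\langle w\rangle^{-\rho|\gamma|}$, while log-convexity of $A_p$ from $(M.1)$ with $A_0=1$ collapses $\prod_i A_{|\gamma_i|}\leq A_{|\alpha|}$ for any decomposition $\sum\gamma_i=\alpha$. Finally, $p!\leq A_p$ (from $(M.4)$ and $A_0=A_1=1$), combined with $|\alpha|^{|\alpha|}\leq e^{|\alpha|}A_{|\alpha|}$, allows the remaining Bell combinatorial factors to be absorbed into $h^{|\alpha|}A_{|\alpha|}$ by adjusting $h$ appropriately.

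For the inductive step, assuming the lemma for $u_0,\ldots,u_{j-1}$, I would estimate each summand of $R_j$ by combining the hypoellipticity of $b$ (contributing $h_1^{j-k}A_{|\alpha+\beta|}b\langle w\rangle^{-\rho(j-k)}$) with the inductive hypothesis applied to $\partial^\beta_\xi D^\alpha_x u_k$ (contributing $m!\,h^{|\alpha+\beta|+2k}A_{|\alpha+\beta|+2k}b^m\langle w\rangle^{-\rho(|\alpha+\beta|+2k)}e^{-\tau b/4}$ after $m$ time derivatives). Summing over $|\alpha+\beta|=j-k$ (cardinality $\leq C^{j-k}$, absorbed into $h^{2j}$) and consolidating $A$-factors by log-convexity yields
$|D^m_\tau D^\gamma_w R_j(\tau,w)|\leq C\,m!\,h^{|\gamma|+2j}A_{|\gamma|+2j}\,b(w)^{m+1}\langle w\rangle^{-\rho(|\gamma|+2j)}e^{-\tau b(w)/4}$.
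For the integral $u_j(t,w)=-\int_0^t e^{-(t-\tau)b(w)}R_j(\tau,w)\,d\tau$, the substitution $y=(t-\tau)b(w)$ converts it to $b(w)^{-1}\int_0^{tb(w)} e^{-y}R_j(t-y/b(w),w)\,dy$, and the $n$-fold time derivative is handled through the identity $\partial_t^n[(tb(w))^m e^{-tb(w)}]=b(w)^n\partial_y^n[y^m e^{-y}]|_{y=tb(w)}$. The Laguerre-type bound $|\partial_y^n[y^m e^{-y}]|\leq n!\,P_m(y)e^{-y/2}$ with $P_m$ polynomial of degree $m$, together with $y^s e^{-y/2}\leq (2s/e)^s$, supply the $n!$ and $b(w)^n$ factors; the extra $b(w)^{-1}$ from the change of variables compensates for the elevated $b(w)^{m+1}$ in the bound on $R_j$.

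The hard part will be the careful bookkeeping of constants. Throughout Faà di Bruno on the base case, the Leibniz/Bell expansions inherent in the $\#$-product, and the Laguerre-type time estimates for the Duhamel integral, one must verify that every numerical factor accumulated grows at most like $C^{|\gamma|+2j}$ and can therefore be absorbed into $h^{|\gamma|+2j}$ by adjusting $h$; this is precisely where the hypothesis $A_p\subset M_p^\rho$ and the inequality $p!\leq A_p$ are indispensable. A parallel subtlety is the quantifier bookkeeping distinguishing the two cases: in the Beurling $(M_p)$ case one must produce a $C$ for \emph{every} $h>0$, which forces a diagonal argument choosing $h$ arbitrarily small at each inductive step and absorbing the resulting constants into a single final $C$; in the Roumieu $\{M_p\}$ case the pair $(h_0,C_0)$ is fixed at the start and must propagate uniformly through all $j\in\NN$.
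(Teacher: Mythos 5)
First, note that the paper does not prove this lemma at all: it is quoted verbatim from \cite[Lemma 7.8]{PP1}, and your framework (transport hierarchy from $\partial_t u+b\#u\sim0$, $u_0=e^{-tb}$, Duhamel for $u_j$, induction on $j$) is indeed the standard construction behind that result (cf.\ also Remark \ref{heat-para-fin} for the finite order model). So the plan is the right one; the problems are in the two places where the whole difficulty of the lemma actually sits.

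The genuine gap is the constant bookkeeping in the Beurling case, and your proposed mechanism does not close it. (1) The inequality $p!\leq A_p$ is not strong enough: terms such as the all-singleton stratum of Fa\`a di Bruno for $D^\alpha_w e^{-tb}$ produce contributions of size $(cK)^{|\alpha|}|\alpha|!$ with a \emph{fixed} constant $K$ (the size of $|\nabla b|/(b\langle w\rangle^{-\rho})$ cannot be made small by ``adjusting $h$''), and $(cK)^{N}N!\leq C h^{N}A_N$ for \emph{every} $h>0$ requires the strict gap $p!\prec A_p$ — which does hold, but as a consequence of $(M.1)$ and $(M.3)'$ for $A_p$ ($A_p/A_{p-1}\cdot p^{-1}\to\infty$), not of $(M.4)$ and $p!\leq A_p$ alone; equivalently one must exploit the factorial slack $\prod_i A_{o_i}\leq A_{\sum o_i}/\nu!$ ($\nu$ = number of derivative factors of $b$, each of order $\geq1$) furnished by $(M.4)$. (2) More seriously, your induction takes the final estimate itself as inductive hypothesis, so at each step the constant gets multiplied by the hypoellipticity constant $C_{h_1}$ and the counting factors, giving $C^{(j)}\sim (KC_{h_1})^{j}$; in the Beurling case, for a fixed target $h$ the constant $C_{h_1}$ is fixed (and blows up as $h_1\to0$), so $(KC_{h_1})^{j}$ cannot be absorbed into $h^{2j}$ by any choice of parameters, and your ``diagonal argument choosing $h$ arbitrarily small at each inductive step'' does not repair this. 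The known proofs avoid the compounding by proving a sharper, more structured statement — e.g.\ the representation $u_j(t,w)=e^{-tb(w)}\sum_{l=1}^{2j}t^{l}u_{l,j}(w)$ with explicit bounds on the coefficients $u_{l,j}$ (including factorial weights in $l$ and control of the number of $b$-derivative factors), from which the stated estimate follows; with only the bare estimate as hypothesis the induction does not close uniformly in $j$. (3) Finally, your Duhamel step only treats time derivatives: $D^\gamma_w$ also falls on the kernel $e^{-(t-\tau)b(w)}$, producing factors $(t-\tau)^{k}$ times products of derivatives of $b$ that must be organized with exactly the same factorial bookkeeping (or one must instead differentiate the ODE $\partial_t u_j=-bu_j-R_j$ and then control $D^\delta_w(b^{i})$ for $i\leq n$ with constants independent of $n$); this is glossed over, and it is not a cosmetic omission, since it is where the mixed $n!\,b^{n}$-structure of the claimed bound has to be verified.
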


Notice that for each $R>0$, the function $u(t,w)=\sum_{n=0}^{\infty} (1-\chi_{n,R}(w))u_n(t,w)=R(\sum_j u_j)(t,w)$ is in $ C^{\infty}(\RR\times\RR^{2d})$.

\begin{lemma}\label{rks75}$($\cite[Lemma 7.10]{PP1}$)$
There exists $R>1$ such that the $C^{\infty}$-function $u(t,w)=\sum_{n=0}^{\infty} (1-\chi_{n,R}(w))u_n(t,w)=R(\sum_j u_j)(t,w)$ satisfies the following condition: for every $h>0$ there exists $C>0$ (resp. there exist $h,C>0$) such that
\beq\label{est_heat_par}
|D^n_tD^{\alpha}_wu(t,w)|\leq Cn!h^{|\alpha|}A_{\alpha}\left(b(w)\right)^n\langle w\rangle^{-\rho|\alpha|}e^{-\frac{t}{4}b(w)},
\eeq
for all $\alpha\in\NN^{2d}$, $n\in\NN$, $(t,w)\in[0,\infty)\times\RR^{2d}$ and
\beqs
\sup_{k\in\ZZ_+}\sup_{\substack{\alpha\in\NN^{2d}\\ n\in\NN}}\sup_{\substack{w\in Q^c_{3Rm_k}\\ t\in [0,\infty)}}\frac{\left|D^n_tD^{\alpha}_w\left(u(t,w)-\sum_{j<k} u_j(t,w)\right)\right|\langle w\rangle^{\rho(|\alpha|+2k)}}{n!h^{|\alpha|+2k}A_{|\alpha|+2k}\left(b(w)\right)^n e^{-\frac{t}{4}b(w)}}\leq C.
\eeqs
\end{lemma}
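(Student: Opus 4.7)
The strategy is to view this as a parametric version of the symbol-resummation Proposition \ref{subordinate}, with Lemma \ref{est_heat_ker} providing the pointwise input bounds and the non-negative weight $n!(b(w))^n e^{-tb(w)/4}$ playing the role of the subordination function $f_\lambda(w)$; the variables $(t,n)$ are carried through as parameters. Smoothness of $u$ is immediate: since $m_j\to\infty$ (by $(M.3)'$ on $M_p$) and $\chi_{j,R}\equiv 1$ on $Q_{2Rm_j}$, only finitely many summands are nonzero near any fixed $w$, so the series is locally finite and $u\in C^\infty(\RR\times\RR^{2d})$. Note also that $\chi_{0,R}\equiv 0$ by definition, so the $j=0$ summand is just $u_0=e^{-tb(w)}$.

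For the global estimate \eqref{est_heat_par}, the $j=0$ piece satisfies the stated bound directly via Fa\`a di Bruno and the (now global) hypoellipticity of $b$. For $j\geq 1$, since $\chi_{j,R}$ is independent of $t$, the Leibniz rule reads
\[
D^n_t D^\alpha_w\bigl[(1-\chi_{j,R})u_j\bigr]=(1-\chi_{j,R})\,D^n_t D^\alpha_w u_j-\sum_{0<\beta\leq\alpha}\binom{\alpha}{\beta}(D^\beta\chi_{j,R})\,D^n_t D^{\alpha-\beta}_w u_j.
\]
Scaling gives $|D^\beta\chi_{j,R}(w)|\leq Ch^{|\beta|}A_\beta(Rm_j)^{-|\beta|}$, and on $\mathrm{supp}\,D^\beta\chi_{j,R}\cup\mathrm{supp}(1-\chi_{j,R})$ we have $\langle w\rangle\gtrsim Rm_j$, which converts $(Rm_j)^{-|\beta|}$ into $\langle w\rangle^{-|\beta|}\leq\langle w\rangle^{-\rho|\beta|}$ (recall $\rho\leq 1$). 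Feeding in Lemma \ref{est_heat_ker} and using $(M.2)$ on $A_p$ to collapse $A_\beta A_{|\alpha-\beta|+2j}\leq c_0H^{|\alpha|+2j}A_{|\alpha|+2j}$, each term is dominated by $Cn!\tilde h^{|\alpha|+2j}A_{|\alpha|+2j}b^n\langle w\rangle^{-\rho(|\alpha|+2j)}e^{-tb/4}$. The decisive observation is that on the relevant support $\langle w\rangle\geq 2Rm_j$, and combining $M_{2j}\leq c_0H^{2j}m_j^{2j}$ (from $(M.2)$ together with $M_j\leq m_j^j$ since $m_p$ is nondecreasing) with $A_p\leq c_0L^pM_p^\rho$ yields
\[
A_{2j}\langle w\rangle^{-2j\rho}\leq A_{2j}(2Rm_j)^{-2j\rho}\leq c_0^{2+\rho}\bigl(LH^{1+\rho}\tilde h/(2R)^\rho\bigr)^{2j}.
\]
Choosing $R$ sufficiently large makes the parenthesised quantity less than $1/2$, so summing over $j\geq 1$ gives a convergent geometric series and yields \eqref{est_heat_par}.

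For the asymptotic tail estimate, the essential geometric observation is that for every $w\in Q^c_{3Rm_k}$ and every $0\leq j\leq k$ one has $\chi_{j,R}(w)=0$: some coordinate of $w$, say $x$, satisfies $\langle x\rangle\geq 3Rm_k$, and since $Rm_k\geq 1$, $|x|\geq\sqrt{9R^2m_k^2-1}\geq 2\sqrt{2}\,Rm_k\geq 2\sqrt{2}\,Rm_j$, which forces $\langle x/(Rm_j)\rangle\geq 3$ and $\psi(x/(Rm_j))=0$. Consequently, on $Q^c_{3Rm_k}$,
\[
u(t,w)-\sum_{j<k}u_j(t,w)=\sum_{j\geq k}(1-\chi_{j,R}(w))u_j(t,w).
\]
I then run the same Leibniz-plus-resummation estimate as before, but now split $A_{|\alpha|+2j}\leq c_0H^{|\alpha|+2j}A_{|\alpha|+2k}A_{2(j-k)}$ via $(M.2)$ and use $\langle w\rangle\geq 2Rm_j\geq 2Rm_{j-k}$ to obtain $A_{2(j-k)}\langle w\rangle^{-2(j-k)\rho}\leq c_0^{1+\rho}(LH^{1+\rho}\tilde h/(2R)^\rho)^{2(j-k)}$. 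For $R$ large enough this series in $j-k\geq 0$ converges geometrically, uniformly in $(k,\alpha,n,t,w)$, assembling the bound with the required factor $h^{|\alpha|+2k}A_{|\alpha|+2k}\langle w\rangle^{-\rho(|\alpha|+2k)}e^{-tb/4}$. The principal technical nuisance is bookkeeping the Beurling/Roumieu dichotomy: in the Roumieu case $\tilde h$ and $R$ are fixed once and for all; in the Beurling case the pivotal constants $c_0,L,H$ are sequence-intrinsic, so a single large $R$ works simultaneously for every target $h>0$ in the output by choosing the Lemma \ref{est_heat_ker} input $\tilde h$ correspondingly small.
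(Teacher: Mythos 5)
Your argument is correct and is essentially the intended one: the paper itself gives no proof of this lemma (it is imported as \cite[Lemma 7.10]{PP1}), and what you write is exactly the standard cut-off resummation argument behind Proposition \ref{subordinate}, run with the parametric weight $n!(b(w))^{n}e^{-tb(w)/4}$ and the input bounds of Lemma \ref{est_heat_ker}, including the key support observation that $\chi_{j,R}$ vanishes on $Q^{c}_{3Rm_k}$ for $j\leq k$ and the correct handling of the Beurling/Roumieu quantifiers when fixing $R$. Only two numerical constants are slightly off and harmless since $R$ is taken large: $\chi_{j,R}\equiv 1$ is guaranteed only on $Q_{Rm_j}$ (not $Q_{2Rm_j}$), and on $\mathrm{supp}\,(1-\chi_{j,R})\cup\mathrm{supp}\,D^{\beta}\chi_{j,R}$ one gets $\langle w\rangle\geq \sqrt{3}\,Rm_j$ rather than $2Rm_j$.
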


\begin{theorem}\label{sol_heat_ker}$($\cite[Theorem 7.11]{PP1}$)$
The function $u(t,w)$ of Lemma \ref{rks75} defines the vector-valued mapping $\mathbf{u}:t\mapsto u(t,\cdot)$, $[0,\infty)\rightarrow \Gamma_{A_p,\rho}^{*,\infty}(\RR^{2d})$, that belongs to $C^{\infty}([0,\infty); \Gamma_{A_p,\rho}^{*,\infty}(\RR^{2d}))$. The operator-valued mapping $t\mapsto (\mathbf{u}(t))^w$ belongs to both
$ C^{\infty}([0,\infty);\mathcal{L}_b(\SSS^*(\RR^d),\SSS^*(\RR^d)))$ and $ C^{\infty}([0,\infty);\mathcal{L}_b(\SSS'^*(\RR^d),\SSS'^*(\RR^d)))$. Moreover, $(\mathbf{u}(t))^w$ satisfies
\beq\label{system2}
\left\{\begin{array}{l}
(\partial_t+b^w)(\mathbf{u}(t))^w=\mathbf{K}(t),\,t\in[0,\infty),\\
(\mathbf{u}(0))^w=\mathrm{Id},\\
\end{array}\right.
\eeq
where $\mathbf{K}\in  C^{\infty}([0,\infty);\mathcal{L}_b(\SSS'^*(\RR^d),\SSS^*(\RR^d)))$.\\
\indent For each $t\geq 0$, $(\mathbf{u}(t))^w\in\mathcal{L}(L^2(\RR^d))$ and there exists $C>0$ such that
 $$\|(\mathbf{u}(t))^w\|_{\mathcal{L}_b(L^2(\RR^d))}\leq C, \quad \mbox{for all } t\geq 0.$$
  The mapping $t\mapsto (\mathbf{u}(t))^w$, $(0,\infty)\rightarrow \mathcal{L}_b(L^2(\RR^d))$, is continuous and $(\mathbf{u}(t))^w\rightarrow (\mathbf{u}(0))^w=\mathrm{Id}$, as $t\rightarrow0^+$, in $\mathcal{L}_p(L^2(\RR^d))$. Furthermore, for each $n\in\ZZ_+$ and $t>0,$ $(\partial^n_t\mathbf{u}(t))^w\in\mathcal{L}(L^2(\RR^d))$. The mapping $t\mapsto (\mathbf{u}(t))^w$, $(0,\infty)\rightarrow \mathcal{L}_b(L^2(\RR^d))$, is smooth and $\partial^n_t(\mathbf{u}(t))^w=(\partial^n_t\mathbf{u}(t))^w$.
\end{theorem}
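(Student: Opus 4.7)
The plan is to first establish that $\mathbf{u}$ is smooth as a map into $\Gamma_{A_p,\rho}^{*,\infty}(\RR^{2d})$, then transfer this smoothness through the Weyl quantisation to obtain the operator properties, and finally derive the $L^{2}$ statements by combining the symbolic estimates with Lemma \ref{operatoronl2}.

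For the first step, I would exploit Lemma \ref{rks75}. The estimate \eqref{est_heat_par} shows that, for fixed $t$, $u(t,\cdot)$ lies in a bounded set of the appropriate Banach space $\Gamma_{A_p,\rho}^{M_p,\infty}(\RR^{2d};h,m)$ since $e^{-tb/4}\le 1$ absorbs the exponential weight uniformly in $t\ge 0$; the analogous inequality for $\partial_{t}^{n}u(t,\cdot)$ carries an extra $n!\,b(w)^{n}$, which is dominated on $[0,T]\times\RR^{2d}$ by a single $e^{M(m'|w|)}$ because $b$ is of ultrapolynomial growth of class $*$. Combining these uniform bounds with the pointwise continuity of $D^{n}_{t}D^{\alpha}_{w}u$, a standard difference quotient argument and dominated convergence in the seminorms of $\Gamma_{A_p,\rho}^{*,\infty}$ yield $\mathbf{u}\in C^{\infty}([0,\infty);\Gamma_{A_p,\rho}^{*,\infty}(\RR^{2d}))$, with $\partial^{n}_{t}\mathbf{u}(t)$ computed pointwise.

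For the second step, the continuity of the Weyl quantisation $a\mapsto a^{w}$ from $\Gamma_{A_p,\rho}^{*,\infty}(\RR^{2d})$ into each of $\mathcal{L}_b(\SSS^{*}(\RR^d),\SSS^{*}(\RR^d))$ and $\mathcal{L}_b(\SSS'^{*}(\RR^d),\SSS'^{*}(\RR^d))$ (Appendix) transfers the smoothness and gives $\partial^{n}_{t}(\mathbf{u}(t))^{w}=(\partial^{n}_{t}\mathbf{u}(t))^{w}$. To verify \eqref{system2} I would use that the functions $u_j$ are, by construction, the terms of the formal asymptotic solution of $(\partial_{t}+b\,\#)u=0$ with $u_{0}(0,\cdot)=1$; thus by the $\#$-calculus of Section \ref{sub Weyl quantasation} and the second estimate of Lemma \ref{rks75}, the symbol of $(\partial_{t}+b^{w})(\mathbf{u}(t))^{w}$ is, for each $t$, subordinated to $e^{-tb/4}/\langle w\rangle^{2k\rho}$ for every $k$, so Proposition \ref{eqsse} puts it into a $t$-smooth family of $*$-regularising operators $\mathbf{K}(t)$. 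The initial condition $(\mathbf{u}(0))^{w}=\mathrm{Id}$ is immediate from the conventions $\chi_{0,R}=0$ and $u_{0}(0,w)=1$, together with the fact that $u_{j}(0,\cdot)$ vanishes for $j\ge 1$ (built into the recursion).

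For the $L^{2}$ statements, \eqref{est_heat_par} with $n=0$ provides uniformly in $t\ge 0$ the bound $|D^{\alpha}_{w}u(t,w)|\le Ch^{|\alpha|}A_{\alpha}\langle w\rangle^{-\rho|\alpha|}$, which is exactly hypothesis \eqref{estforope} of Lemma \ref{operatoronl2}; that lemma delivers both the uniform bound $\|(\mathbf{u}(t))^{w}\|_{\mathcal{L}_b(L^{2}(\RR^d))}\le C$ and, since $\mathbf{u}(t)\to\mathbf{u}(0)$ in $\Gamma_{A_p,\rho}^{*,\infty}$, the convergence $(\mathbf{u}(t))^{w}\to\mathrm{Id}$ in $\mathcal{L}_{p}(L^{2}(\RR^d))$ as $t\to 0^{+}$. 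For each $t_0>0$ and $n\in\ZZ_+$, the factor $(b(w))^{n}e^{-tb(w)/4}\le (4n/(et_0))^{n}\,e^{-tb(w)/8}$ on $[t_0,\infty)\times\RR^{2d}$ brings the estimate on $\partial^{n}_{t}u(t,\cdot)$ back into the form \eqref{estforope}, giving $(\partial^{n}_{t}\mathbf{u}(t))^{w}\in\mathcal{L}(L^{2}(\RR^d))$. The main obstacle I anticipate is upgrading $\mathcal{L}_{p}$- to $\mathcal{L}_{b}$-continuity of $t\mapsto (\partial^{n}_{t}\mathbf{u}(t))^{w}$ on $(0,\infty)$; for this I would apply the mean value theorem to $\partial^{n}_{t}u$ in the $t$ variable and absorb the resulting factor $|t-t'|\,b(w)e^{-\min(t,t')b(w)/8}$ (for $t,t'\ge t_0$) into $|t-t'|\,C(t_0,n)$, thereby producing a difference whose derivatives satisfy \eqref{estforope} with constant $O(|t-t'|)$, so that Lemma \ref{operatoronl2} yields operator-norm continuity and hence smoothness in $\mathcal{L}_{b}(L^{2}(\RR^d))$ on $(0,\infty)$.
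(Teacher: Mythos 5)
This theorem is not proved in the paper at all: it is imported verbatim from \cite[Theorem 7.11]{PP1}, so there is no in-paper proof to compare against. Your proposal reconstructs what is essentially the expected argument, and its skeleton is sound: the uniform-in-$t$ estimates \eqref{est_heat_par} and Lemma \ref{est_heat_ker} (with $e^{-tb/4}\leq 1$ and $(b(w))^{n}$ absorbed by ultrapolynomial growth) give the smoothness of $\mathbf{u}$ into $\Gamma^{*,\infty}_{A_p,\rho}(\RR^{2d})$; continuity of the Weyl quantisation (Proposition \ref{continuity}) transfers this to $\mathcal{L}_b(\SSS^*,\SSS^*)$ and $\mathcal{L}_b(\SSS'^*,\SSS'^*)$; and Lemma \ref{operatoronl2} applied to $\{u(t,\cdot)\}_{t\geq 0}$, to $\{\partial_t^n u(t,\cdot)\}_{t\geq t_0}$ (using $(b(w))^n e^{-tb(w)/4}\leq C(n,t_0)e^{-tb(w)/8}$), and to the families of difference quotients handles the $L^2$ statements, including the Lipschitz-type operator-norm continuity on $(0,\infty)$.

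Two spots deserve tightening. First, the step you treat most lightly is precisely the delicate one: producing $\mathbf{K}\in C^{\infty}([0,\infty);\mathcal{L}_b(\SSS'^*(\RR^d),\SSS^*(\RR^d)))$. Writing $\mathbf{K}(t)=(\partial_t\mathbf{u}(t))^w+b^w(\mathbf{u}(t))^w$, the composition only has a symbol of the form $R(b\# u(t,\cdot))$ modulo an equicontinuous $*$-regularising family (Corollary \ref{corweylqu}), and showing that $\partial_t u(t,\cdot)+R(b\# u(t,\cdot))$ quantises, via Proposition \ref{eqsse}, to a family that is not merely equicontinuous but \emph{smooth in $t$ for the topology of} $\mathcal{L}_b(\SSS'^*,\SSS^*)$ requires running the difference-quotient argument at the level of these symbol estimates, using the $D^n_t$ bounds in Lemma \ref{rks75}; smoothness into $\mathcal{L}_b(\SSS'^*,\SSS'^*)$ plus pointwise regularity does not suffice. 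Second, two smaller points: the identity $u_j(0,\cdot)=0$ for $j\geq 1$ (needed for $(\mathbf{u}(0))^w=\mathrm{Id}$) depends on the explicit construction of the $u_j$ in \cite{PP1}, which this paper does not reproduce, so it should be cited or verified; and since the seminorms of $\Gamma^{*,\infty}_{A_p,\rho}$ are suprema, ``dominated convergence'' is not the right mechanism for the $C^\infty$ claim in your first step — the correct device is the Taylor remainder estimate, bounding $D^\alpha_w\bigl(\partial_t^{n}u(t+s,w)-\partial_t^{n}u(t,w)-s\,\partial_t^{n+1}u(t,w)\bigr)$ by $\frac{s^2}{2}$ times the estimate \eqref{est_heat_par} at order $n+2$, uniformly in $(\alpha,w)$. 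With these repairs the proposal matches the route of the cited source.
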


Since the operator $a^w-b^w=(a-b)^w$ is $*$-regularising (by the definition of $b$), (\ref{system2}) implies
\beq\label{system3}
\left\{\begin{array}{l}
(\partial_t+a^w)(\mathbf{u}(t))^w=\tilde{\mathbf{K}}(t),\,t\in[0,\infty),\\
(\mathbf{u}(0))^w=\mathrm{Id},\\
\end{array}\right.
\eeq
where $\tilde{\mathbf{K}}\in  C^{\infty}([0,\infty);\mathcal{L}_b(\SSS'^*(\RR^d),\SSS^*(\RR^d)))$.\\
\indent We denote by $A$ the unbounded operator on $L^2(\RR^d)$ defined by $a^w$. We apply Proposition \ref{discretness_of_spe} and obtain
that the spectrum of the self-adjoint operator $\overline{A}$ is
given by a sequence of real eigenvalues $\{\lambda_j\}_{j\in\mathbb{N}}$ which tends
to $+\infty$, where the multiplicities are taken into account, and $L^2(\RR^d)$ has an orthonormal basis
$\{\varphi_j\}_{j\in\NN}$ consisting of eigenfunctions of $\overline{A}$ which
all belong to $\SSS^*(\RR^d)$ ($\varphi_j$ corresponds to
$\lambda_j$, $j\in\NN$). For each $t\geq0$, we define the following
operator on $L^2(\RR^d)$
\beq\label{semigroup11}
T(t)g=\sum_{j=0}^{\infty}
e^{-t\lambda_j}(g,\varphi_j)\varphi_j,\,\, g\in L^2(\RR^d).
\eeq
Obviously, the above series is unconditionally convergent and $T(t)$
is continuous. Furthermore, $T(t)$ is self-adjoint (one easily
verifies that $(T(t)g,g)\in [0,\infty)$, $g\in L^2(\RR^d)$, and hence it
is positive) and $T(0)=\mathrm{Id}$. Clearly, $\{T(t)\}_{t\geq 0}$ is a
$C_0$-semigroup.\\
\indent As it will become clear later, the analysis of this semigroup is one of the key ingredients in the proofs of the main results from Section \ref{Section Weyl formulae, part I}. We will show:\\
\indent - $T(t)$ belongs to $\mathcal{L}(\SSS^*(\RR^d),\SSS^*(\RR^d))$;\\
\indent - the mapping $t\mapsto T(t)$, $[0,\infty)\rightarrow \mathcal{L}_b(\SSS^*(\RR^d),\SSS^*(\RR^d))$, is smooth;\\
\indent - $T(t)$ and $(\mathbf{u}(t))^w$ are the same, modulo a smooth $*$-regularising family.\\
As the proofs of these facts are rather lengthy, we devote a whole subsection to them.

\begin{remark}\label{heat-para-fin}
If $a\in \Gamma^m_{\rho}(\RR^{2d})$ is a hypoelliptic real-valued symbol such that $a(w)\geq c\langle w\rangle^{\delta}$ for some $\delta>0$, $\forall|w|\geq c$, one can construct its heat parametrix as well. For this purpose, one can use the same construction as in \cite[Theorem 4.5.1, p. 193]{NR} (although it is there given only for elliptic symbols). In fact, defining $b\in \Gamma^m_{\rho}(\RR^{2d})$ to be positive on $\RR^{2d}$ and equal to $a$ outside of a compact neighbourhood of the origin, one can repeat the proof of the quoted result verbatim to find a symbol $u(t,\cdot)\in\Gamma^m_{\rho}(\RR^{2d})$, $t\geq 0$, which solves (\ref{system2}) with $\mathbf{K}\in C^{\infty}([0,\infty);\mathcal{L}_b(\SSS'(\RR^d),\SSS(\RR^d)))$. Moreover, there are $u_j(t,w)\in C^{\infty}(\RR\times\RR^{2d})$, $j\in\NN$, such that
\beqs
t^k \left|D^n_tD^{\alpha}_w\left(u(t,w)-\sum_{j=0}^{J-1}u_j(t,w)\right)\right|\leq \frac{C_{k,n,J,t_0,\alpha}(b(w))^{n-k}}{\langle w\rangle^{\rho(|\alpha|+2J)}},\,\, w\in\RR^{2d}, t\in[0,t_0]
\eeqs
($t_0>0$ can be arbitrarily chosen), where $u_0(t,w)=e^{-tb(w)}$ and $u_j$ is given as  $u_j(t,w)=e^{-tb(w)}\sum_{l=1}^{2j}t^lu_{l,j}(w)$, $j\in\ZZ_+$, with symbols $u_{l,j}$ that satisfy the estimates
\beqs
|D^{\alpha}_wu_{l,j}(w)|\leq C_{l,j,\alpha}(b(w))^l\langle w\rangle^{-\rho(|\alpha|+2j)},\,\, w\in\RR^{2d}.
\eeqs
Notice then that $(\mathbf{u}(t))^w=(u(t,\cdot))^w$ satisfies the equation (\ref{system3}) for some vector-valued function $\tilde{\mathbf{K}}\in  C^{\infty}([0,\infty);\mathcal{L}_b(\SSS'(\RR^d),\SSS(\RR^d)))$.
\end{remark}

\subsection{The analysis of the semigroup $T(t)$, $t\geq 0$}
\label{analysis heat semigroup}
\begin{lemma}
The infinitesimal generator of $\{T(t)\}_{t\geq 0}$ is
$-\overline{A}$.
\end{lemma}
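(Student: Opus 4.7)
The plan is a standard spectral-theoretic identification. Let $B$ denote the infinitesimal generator of the $C_0$-semigroup $\{T(t)\}_{t\geq 0}$. For each fixed $j\in\NN$ the definition \eqref{semigroup11} and the orthonormality of $\{\varphi_k\}$ give $T(t)\varphi_j=e^{-t\lambda_j}\varphi_j$, hence
\[
\lim_{t\to 0^+}\frac{T(t)\varphi_j-\varphi_j}{t}=-\lambda_j\varphi_j=-\overline{A}\varphi_j
\]
in $L^2(\RR^d)$. Therefore $\varphi_j\in D(B)$ and $B\varphi_j=-\overline{A}\varphi_j$ for every $j$.

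Now consider the algebraic linear span $\mathcal{D}:=\mathrm{span}\{\varphi_j\}_{j\in\NN}$. Since $\{\varphi_j\}$ is an orthonormal basis of $L^2(\RR^d)$ by Proposition \ref{discretness_of_spe}, $\mathcal{D}$ is dense in $L^2(\RR^d)$. By the preceding step $\mathcal{D}\subset D(B)\cap D(\overline{A})$, and $\mathcal{D}$ is left invariant by every $T(t)$, because each eigenspace of $\overline{A}$ is finite-dimensional and $T(t)$ acts on it by the scalar $e^{-t\lambda_j}$. The standard core criterion for $C_0$-semigroups (a semigroup-invariant dense subspace of $D(B)$ is a core for the generator) therefore implies that $\mathcal{D}$ is a core for $B$. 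By the spectral theorem applied to the self-adjoint operator $\overline{A}$, $\mathcal{D}$ is likewise a core for $\overline{A}$.

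Finally, $B$ and $-\overline{A}$ are closed operators that coincide on the common core $\mathcal{D}$, so $B=-\overline{A}$. The only mildly nontrivial ingredient is the appeal to the core criterion for $B$, but this is entirely standard; I foresee no real obstacle beyond invoking it cleanly.
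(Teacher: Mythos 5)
Your proof is correct, but it follows a genuinely different route from the paper's. The paper first shows $-\overline{A}\subset B$ directly: for $\psi\in\SSS^*(\RR^d)$ it expands $\overline{A}\psi$ in the eigenbasis, uses the semi-boundedness of the eigenvalues to get the uniform bound $|e^{-t\lambda_j}-1|\leq Ct|\lambda_j|$, and applies dominated convergence to conclude $t^{-1}(T(t)\psi-\psi)\to-\overline{A}\psi$ in $L^2(\RR^d)$; it then closes the gap abstractly by noting that $B$ is symmetric, so $B\subset B^*\subset(-\overline{A})^*=-\overline{A}$, whence $B=-\overline{A}$. You instead identify $B$ only on the individual eigenfunctions (where $T(t)\varphi_j=e^{-t\lambda_j}\varphi_j$ makes the differential quotient trivial), and then invoke two standard structural facts: the invariant-dense-subspace core criterion for generators of $C_0$-semigroups (available in Pazy, which the paper already cites), and the fact that the span of an orthonormal eigenbasis is a core for the self-adjoint operator it diagonalises, so that $B$ and $-\overline{A}$ are closures of their common restriction to $\mathcal{D}$. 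Your approach buys you independence from the dominated-convergence estimate and from the semi-boundedness of the spectrum (which the paper needs for the domination), at the price of citing the core theorem and the spectral-theorem core fact; the paper's argument is more hands-on and self-contained, exploiting self-adjointness of $\overline{A}$ through the adjoint-inclusion trick rather than through cores. One cosmetic remark: the invariance $T(t)\mathcal{D}\subset\mathcal{D}$ is immediate from $T(t)\varphi_j=e^{-t\lambda_j}\varphi_j$; the finite multiplicity of the eigenvalues plays no role there.
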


\begin{proof} For the moment, denote as $B$ the infinitesimal
generator of $\{T(t)\}_{t\geq 0}$. Fix $\psi\in\SSS^*(\RR^d)$.
Since
$\overline{A}\psi=\sum_{j=0}^{\infty}(\overline{A}\psi,\varphi_j)\varphi_j$,
we have
$\sum_{j=0}^{\infty}|(\overline{A}\psi,\varphi_j)|^2<\infty$ and,
as $\overline{A}$ is self-adjoint, we conclude
\beqs
\sum_{j=0}^{\infty}\lambda_j^2|(\psi,\varphi_j)|^2<\infty\,\,
\mbox{and}\,\,
\overline{A}\psi=\sum_{j=0}^{\infty}\lambda_j(\psi,\varphi_j)\varphi_j,
\eeqs
where the last series is unconditionally convergent in
$L^2(\RR^d)$. We have
\beq\label{est11kk}
\frac{T(t)\psi-\psi}{t}+\overline{A}\psi=\sum_{j=0}^{\infty}\left(\frac{e^{-t\lambda_j}-1}t +\lambda_j\right)(\psi,\varphi_j)\varphi_j.
\eeq
Let $c>0$ be such that $\lambda_j> -c$, $j\in\NN$. By Taylor formula, there exists $C>0$ such that $|e^{-ts}-1|\leq
Ct|s|$, for all $t\in[0,1]$, $s\geq -c$. Hence
$|e^{-t\lambda_j}-1|\leq Ct|\lambda_j|$, for all $t\in[0,1]$,
$j\in\NN$. Thus, letting $t\rightarrow 0^+$ in (\ref{est11kk}),
dominated convergence implies
$t^{-1}(T(t)\psi-\psi)\rightarrow-\overline{A}\psi$ in
$L^2(\RR^d)$. Thus $-A\subset B$ and hence $-\overline{A}\subset
B$ ($B$ is closed as a generator of a $C_0$-semigroup). Now, for
$f,g\in D(B)$, we have
\beqs
(Bf,g)=\lim_{t\rightarrow0^+}(t^{-1}((T(t)f-f),g)=\lim_{t\rightarrow0^+}(f,t^{-1}(T(t)g-g)) =(f,Bg),
\eeqs
i.e. $B\subset B^*$. Since $B^*\subset
-\overline{A}^*=-\overline{A}$ (which follows from
$-\overline{A}\subset B$), we conclude $-\overline{A}=B$.
\end{proof}

Let $c>0$ be large enough such that $\lambda_j>-c+1$, $j\in\NN$,
and $\tilde{a}(w)=a(w)+c>0$, $w\in\RR^{2d}$. Then
$\tilde{a}\in\Gamma_{A_p,\rho}^{*,\infty}(\RR^{2d})$ is
hypoelliptic and we denote by $\tilde{A}$ the corresponding
unbounded operator on $L^2(\RR^d)$. Notice that $\sigma(\overline{\tilde{A}})\subseteq \{\lambda\in\RR|\, \lambda> 1\}$ and $\overline{\tilde{A}}$ is self-adjoint (see Proposition \ref{maximalreal}).\\
\indent Denote by $\mathbf{P}$ the following closed sector:
$\{z\in\CC\backslash\{0\}|\, -3\pi/4\leq\arg z\leq 3\pi/4\}\cup\{0\}$. One easily
verifies that there exists $\tilde{C}>0$ such that
\beq\label{est_forss}
\tilde{a}(w)\leq
\tilde{C}|\tilde{a}(w)+z|\,\, \mbox{and}\,\, |z|\leq
\tilde{C}|\tilde{a}(w)+z|,\,\, \forall w\in\RR^{2d},\,\, \forall
z\in\mathbf{P}.
\eeq
Of course, $\tilde{a}(w)+z\neq 0$, for all
$w\in\RR^{2d}$, $z\in\mathbf{P}$. We denote by $\tilde{a}_z$ the
symbol $\tilde{a}+z\in\Gamma_{A_p,\rho}^{*,\infty}(\RR^{2d})$.
These inequalities yield that $\tilde{a}_z$, $z\in\mathbf{P}$, are
hypoelliptic and they satisfy the following uniform estimate: for
every $h>0$ there exists $C>0$ (resp. there exist $h,C>0$) such
that
\beq\label{est_for_ap}
\left|D^{\alpha}\tilde{a}_z(w)\right|\leq C
h^{|\alpha|}A_{\alpha}|\tilde{a}_z(w)|\langle
w\rangle^{-\rho|\alpha|},\,\, w\in\RR^{2d},\, \alpha\in\NN^{2d},\,
z\in\mathbf{P}.
\eeq
Notice that (\ref{est_forss}) implies that
there exist $c,C,m>0$ (resp. for every $m>0$ there exist $c,C>0$)
such that
\beq\label{est_bee}
c(1+|z|)e^{-M(m|\xi|)}e^{-M(m|x|)}\leq |\tilde{a}_z(x,\xi)|\leq
C(1+|z|)e^{M(m|\xi|)}e^{M(m|x|)},
\eeq
for all
$(x,\xi)\in\RR^{2d}$, $z\in\mathbf{P}$. In the Roumieu case, employing Lemma \ref{lemulgr117}, this estimate yields the existence of $(k_p)\in\mathfrak{R}$ and $c,C>0$ such that
\beq\label{est_rou}
c(1+|z|)e^{-N_{k_p}(|\xi|)}e^{-N_{k_p}(|x|)}\leq
|\tilde{a}_z(x,\xi)|\leq
C(1+|z|)e^{N_{k_p}(|\xi|)}e^{N_{k_p}(|x|)},
\eeq
for all
$(x,\xi)\in\RR^{2d}$, $z\in\mathbf{P}$. Define
$q^{(z)}_0(w)=1/\tilde{a}_z(w)$, $w\in\RR^{2d}$, and inductively
\beqs
q^{(z)}_j(x,\xi)=-q^{(z)}_0(x,\xi)\sum_{s=1}^j\sum_{|\alpha+\beta|=s}
\frac{(-1)^{|\beta|}}{\alpha!\beta!2^s}\partial^{\alpha}_{\xi}
D^{\beta}_x q^{(z)}_{j-s}(x,\xi) \partial^{\beta}_{\xi}
D^{\alpha}_x \tilde{a}_z(x,\xi),\,\, (x,\xi)\in \RR^{2d}.
\eeqs
In a completely analogous way as in \cite[Subsection 6.2.1]{PP1},
one proves that $\sum_j q^{(z)}_j\in
FS_{A_p,\rho}^{*,\infty}(\RR^{2d};0)$, $\sum_j q^{(z)}_j\#
\tilde{a}_z=\mathbf{1}=\tilde{a}_z\#\sum_j q^{(z)}_j$ in $FS_{A_p,\rho}^{*,\infty}(\RR^{2d};0)$ and the
following estimate holds: for every $h>0$ there exists $C>0$
(resp. there exist $h,C>0$) such that
\beq\label{estforthepar}
\left|D^{\alpha}_w q^{(z)}_j(w)\right|\leq
C\frac{h^{|\alpha|+2j}A_{|\alpha|+2j}}{|\tilde{a}_z(w)|\langle
w\rangle^{\rho(|\alpha|+2j)}},\,\, w\in
\RR^{2d},\,\alpha\in\NN^{2d},\, j\in\NN,\, z\in\mathbf{P}.
\eeq
This estimate together with (\ref{est_bee}) in the Beurling case
and (\ref{est_rou}) in the Roumieu case respectively, implies the
following:\\
\indent in the $(M_p)$ case, there exists $m>0$ such that for
every $h>0$ there is $C>0$ such that
\beq\label{estuniforminlamb}
(1+|z|)\left|D^{\alpha}_w q^{(z)}_j(w)\right|\leq
Ch^{|\alpha|+2j}A_{|\alpha|+2j}e^{M(m|\xi|)}e^{M(m|x|)}\langle
w\rangle^{-\rho(|\alpha|+2j)},
\eeq
for all $w\in \RR^{2d}$, $\alpha\in\NN^{2d}$, $j\in\NN$, $z\in\mathbf{P}$;\\
\indent in the $\{M_p\}$ case, there exist $(k_p)\in\mathfrak{R}$
and $h,C>0$ such that
\beq\label{estuniforminlamr}
(1+|z|)\left|D^{\alpha}_w q^{(z)}_j(w)\right|\leq
Ch^{|\alpha|+2j}A_{|\alpha|+2j}e^{N_{k_p}(|\xi|)}e^{N_{k_p}(|x|)}\langle
w\rangle^{-\rho(|\alpha|+2j)},
\eeq
for all $w\in \RR^{2d}$,
$\alpha\in\NN^{2d}$, $j\in\NN$, $z\in\mathbf{P}$. Thus, we have
obtained
\beq
\Big\{\ssum (1+|z|)q^{(z)}_j\big|\, z\in\mathbf{P}\Big\}&\precsim& e^{M(m|\xi|)}e^{M(m|x|)}\,\, \mbox{in}\,\, FS_{A_p,\rho}^{(M_p),\infty}(\RR^{2d};0)\,\, \mbox{and}\label{ths7975}\\
\Big\{\ssum (1+|z|)q^{(z)}_j\big|\, z\in\mathbf{P}\Big\}&\precsim&
e^{N_{k_p}(|\xi|)}e^{N_{k_p}(|x|)}\,\, \mbox{in}\,\,
FS_{A_p,\rho}^{\{M_p\},\infty}(\RR^{2d};0)\label{ths7977}
\eeq
in the Beurling and the Roumieu case, respectively. Similarly,
(\ref{est_bee}) and (\ref{est_for_ap}) yield
$\{\tilde{a}_{z}/(1+|z|)|\,z\in\mathbf{P}\}\precsim
e^{M(m|\xi|)}e^{M(m|x|)}$ in the Beurling case and (\ref{est_rou})
and (\ref{est_for_ap}) imply
$\{\tilde{a}_{z}/(1+|z|)|\,z\in\mathbf{P}\}\precsim
e^{N_{k_p}(|\xi|)}e^{N_{k_p}(|x|)}$ in the Roumieu case. Thus,
Corollary \ref{corweylqu} implies that there exist $R_1,R_2>0$
such that
$$\Big
\{\mathrm{Op}_{1/2}\big(R_1(\sum_j
q^{(z)}_j)\big)\tilde{a}_{z}^w-\mathrm{Id}\big|\,
z\in\mathbf{P}\Big\}
\quad \mbox{and} \quad
\Big\{\tilde{a}_{z}^w\mathrm{Op}_{1/2}\big(R_2(\sum_j
q^{(z)}_j)\big)-\mathrm{Id}\big|\, z\in\mathbf{P}\Big\}
$$
 are
equicontinuous subsets of
$\mathcal{L}(\SSS'^*(\RR^d),\SSS^*(\RR^d))$ (note that $R(\sum_j
(1+|z|)q^{(z)}_j)=(1+|z|)R(\sum_j q^{(z)}_j)$, for $R>0$). By
taking $R=\max\{R_1, R_2\}$, we obtain the next result (taking
larger $R_1$ or $R_2$ yields the same results because of
Proposition \ref{eqsse}).

\begin{proposition}\label{uniformparam}
There exists $R>0$, which can be taken arbitrary large, such that
\beqs
\Big\{\mathrm{Op}_{1/2}\big(R(\ssum
q^{(z)}_j)\big)\tilde{a}_{z}^w-\mathrm{Id}\big|\,
z\in\mathbf{P}\Big\}\quad \mbox{and}\quad
\Big\{\tilde{a}_{z}^w\mathrm{Op}_{1/2}\big(R(\ssum
q^{(z)}_j)\big)-\mathrm{Id}\big|\, z\in\mathbf{P}\Big\}
\eeqs
are
equicontinuous subsets of
$\mathcal{L}(\SSS'^*(\RR^d),\SSS^*(\RR^d))$. Moreover, the estimate
\eqref{estforthepar} holds for
$\{\sum_j q^{(z)}_j\}_{z\in\mathbf{P}}$.
\end{proposition}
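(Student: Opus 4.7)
The plan is straightforward because essentially all the technical work has been assembled in the lines above the statement. We have the formal-series parametrix identities $\sum_j q^{(z)}_j\#\tilde{a}_z = \mathbf{1} = \tilde{a}_z\#\sum_j q^{(z)}_j$ in $FS_{A_p,\rho}^{*,\infty}(\RR^{2d};0)$, the uniform subordinations \eqref{ths7975}/\eqref{ths7977} of $\{(1+|z|)\sum_j q^{(z)}_j\}_{z\in\mathbf{P}}$, and the matching subordination $\{\tilde{a}_z/(1+|z|)\}_{z\in\mathbf{P}}\precsim e^{M(m|\cdot|)}e^{M(m|\cdot|)}$ (and its Roumieu analogue). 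The $(1+|z|)$-renormalisation is the whole point: it is inserted on the $q^{(z)}_j$ side and removed from the $\tilde{a}_z$ side so that in the sharp product the factors cancel and every constant that eventually appears is independent of $z\in\mathbf{P}$.

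I would first apply Proposition \ref{subordinate} to $\{(1+|z|)\sum_j q^{(z)}_j\}_{z\in\mathbf{P}}$ to obtain a threshold $R_0>0$ such that for each $R\ge R_0$ the canonical realisation $(1+|z|)R(\sum_j q^{(z)}_j)$ is a bounded subset of $\Gamma_{A_p,\rho}^{(M_p),\infty}(\RR^{2d};m)$ (resp.\ of $\Gamma_{A_p,\rho}^{\{M_p\},\infty}(\RR^{2d};h)$) satisfying $\precsim_f$ uniformly in $z$. Dividing by $(1+|z|)$ gives the actual Weyl symbols $R(\sum_j q^{(z)}_j)\in\Gamma_{A_p,\rho}^{*,\infty}(\RR^{2d})$; the claim that \eqref{estforthepar} holds uniformly on $\mathbf{P}$ is already precisely the estimate derived in the paragraph defining the $q^{(z)}_j$. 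Next I would invoke Corollary \ref{corweylqu} from the Appendix applied to the two sharp products $R(\sum_j q^{(z)}_j)\#\tilde{a}_z$ and $\tilde{a}_z\#R(\sum_j q^{(z)}_j)$. Both are $\sim$-equivalent to $\mathbf{1}$ uniformly in $z$: the formal identity gives equality in $FS_{A_p,\rho}^{*,\infty}(\RR^{2d};0)$, and after realisation the $(1+|z|)^{\pm 1}$ factors cancel, leaving a uniform subordination of the error. Corollary \ref{corweylqu} then yields equicontinuous families $\mathrm{Op}_{1/2}(R(\sum_j q^{(z)}_j))\tilde{a}_z^w - \mathrm{Id}$ and $\tilde{a}_z^w\,\mathrm{Op}_{1/2}(R(\sum_j q^{(z)}_j)) - \mathrm{Id}$ in $\mathcal{L}(\SSS'^*(\RR^d),\SSS^*(\RR^d))$, provided $R$ exceeds thresholds $R_1$ and $R_2$ respectively. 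Setting $R=\max\{R_0,R_1,R_2\}$ handles both directions simultaneously, and Proposition \ref{eqsse} guarantees that replacing this $R$ by any larger value keeps the realised symbol within the same $\sim$-equivalence class and thus preserves the $*$-regularising equicontinuity, so $R$ may be taken arbitrarily large.

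The main delicate point is the uniform-in-$z$ bookkeeping of the constants appearing in Propositions \ref{subordinate} and \ref{eqsse} and in Corollary \ref{corweylqu}: one has to verify that substituting $(1+|z|)^{\pm 1}$ on the two factors of the sharp product leaves a single $z$-independent weight $e^{M(m|\cdot|)}e^{M(m|\cdot|)}$ (respectively $e^{N_{k_p}(|\cdot|)}e^{N_{k_p}(|\cdot|)}$ in the Roumieu case) governing the error, so that equicontinuity in $\mathcal{L}(\SSS'^*(\RR^d),\SSS^*(\RR^d))$ does not degenerate as $|z|\to\infty$ within the sector $\mathbf{P}$. Once this cancellation is checked, the unboundedness of $\mathbf{P}$ causes no further trouble.
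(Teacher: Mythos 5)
Your proposal is correct and follows essentially the same route as the paper: the uniform estimate \eqref{estforthepar}, combined with \eqref{est_bee} (resp.\ \eqref{est_rou}), yields the $z$-independent subordinations of $\{(1+|z|)\sum_j q^{(z)}_j\}_{z\in\mathbf{P}}$ and $\{\tilde{a}_z/(1+|z|)\}_{z\in\mathbf{P}}$, Corollary \ref{corweylqu} applied to these renormalised families (whose sharp products are $\mathbf{1}$, so that the realised third symbol is $1$ and the $(1+|z|)^{\pm1}$ factors cancel) gives the two equicontinuous $*$-regularising families with thresholds $R_1,R_2$, and $R=\max\{R_1,R_2\}$ together with Proposition \ref{eqsse} allows $R$ to be taken arbitrarily large. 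Your extra preliminary appeal to Proposition \ref{subordinate} is only a cosmetic difference, as that step is implicit in Corollary \ref{corweylqu}.
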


\begin{lemma}\label{lemconofp}
There exists $R'>0$ such that for all $R\geq R'$ the following statements hold:
\begin{itemize}
\item[$(i)$] $q_z:=R(\sum_j q^{(z)}_j)\in \Gamma_{A_p,\rho}^{*,\infty}(\RR^{2d})$, $z\in\mathbf{P}$, and for every $h>0$ there exists $C>0$ (resp. there exist $h,C>0$) such that
\beq\label{estforthepar1}
\left|D^{\alpha}_w q_z(w)\right|\leq
\frac{Ch^{|\alpha|}A_{\alpha}}{|\tilde{a}_z(w)|\langle
w\rangle^{\rho|\alpha|}},\,\, w\in
\RR^{2d},\,\alpha\in\NN^{2d},\, z\in\mathbf{P};
\eeq
\item[$(ii)$] the set $\{(1+|z|)q^w_z|\, z\in\mathbf{P}\}$ is equicontinuous in both $\mathcal{L}(\SSS^*(\RR^d),\SSS^*(\RR^d))$ and $\mathcal{L}(\SSS'^*(\RR^d),\SSS'^*(\RR^d))$.
\end{itemize}
\end{lemma}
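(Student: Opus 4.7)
The plan is to derive both parts from a direct application of Proposition \ref{subordinate} to the family $U=\{\sum_j q_j^{(z)}\,|\, z\in\mathbf{P}\}$, exploiting the uniform-in-$z$ bound (\ref{estforthepar}). First I would verify that $U\precsim\{1/|\tilde{a}_z|\,|\, z\in\mathbf{P}\}$ in $FS_{A_p,\rho}^{*,\infty}(\RR^{2d};0)$; this is exactly what (\ref{estforthepar}) says, since $q_0^{(z)}=1/\tilde{a}_z$ is globally defined (as $\tilde{a}_z$ never vanishes on $\RR^{2d}$ by (\ref{est_forss})) and the bounds on all $q_j^{(z)}$ extend to all of $\RR^{2d}$. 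Proposition \ref{subordinate} then yields an $R'>0$ such that for every $R\geq R'$, $q_z=R(\sum_j q_j^{(z)})\in\Gamma_{A_p,\rho}^{*,\infty}(\RR^{2d})$ and
$$\left|D^{\alpha}_w\bigl(q_z(w)-\sum_{j<n}q_j^{(z)}(w)\bigr)\right|\leq \frac{Ch^{|\alpha|+2n}A_{|\alpha|+2n}}{|\tilde{a}_z(w)|\langle w\rangle^{\rho(|\alpha|+2n)}}, \quad w\in Q^c_{Bm_n},$$
uniformly in $z\in\mathbf{P}$.

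For part $(i)$, I specialise this estimate to $n=1$ and combine it with the global pointwise bound (\ref{estforthepar}) for $j=0$ on $q_0^{(z)}$ via the triangle inequality. Condition $(M.2)$ absorbs $A_{|\alpha|+2}$ into $A_\alpha$ (at the expense of enlarging $h$ by a factor $H$), and $\langle w\rangle^{-\rho(|\alpha|+2)}\leq\langle w\rangle^{-\rho|\alpha|}$, so (\ref{estforthepar1}) follows on $Q^c_{Bm_1}$. On the compact set $Q_{Bm_1}$ I use Proposition \ref{subordinate} once more, this time applied to the rescaled family $\{\sum_j(1+|z|)q_j^{(z)}\}$, which by (\ref{ths7975})--(\ref{ths7977}) is subordinated (uniformly in $z$) to an exponential weight. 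This yields the boundedness of $\{(1+|z|)q_z\,|\,z\in\mathbf{P}\}$ in $\Gamma_{A_p,\rho}^{(M_p),\infty}(\RR^{2d};m)$ (resp. in $\Gamma_{A_p,\rho}^{\{M_p\},\infty}(\RR^{2d};h)$). Combining this with the elementary comparison $|\tilde{a}_z(w)|\asymp 1+|z|$ on $Q_{Bm_1}$ (immediate from boundedness of $\tilde{a}$ there and (\ref{est_forss})) converts the estimate into the desired form (\ref{estforthepar1}) on that region.

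For part $(ii)$, I multiply (\ref{estforthepar1}) by $(1+|z|)$ and invoke the upper bound in (\ref{est_bee}) (respectively (\ref{est_rou}) combined with Lemma \ref{lemulgr117}) to obtain $(1+|z|)/|\tilde{a}_z(w)|\leq C e^{M(m|w|)}$ (resp. the analogous Roumieu-type bound). This shows that $\{(1+|z|)q_z\,|\,z\in\mathbf{P}\}$ is a bounded subset of $\Gamma_{A_p,\rho}^{(M_p),\infty}(\RR^{2d};m)$ (resp. of $\Gamma_{A_p,\rho}^{\{M_p\},\infty}(\RR^{2d};h)$). Since the Weyl quantisation $\mathrm{Op}_{1/2}$ is continuous from the symbol class into $\mathcal{L}_b(\SSS^*(\RR^d))$ and into $\mathcal{L}_b(\SSS'^*(\RR^d))$ (part of the symbolic calculus of \cite{BojanP,PP1} collected in Section \ref{sec_3} and the Appendix), boundedness in the symbol space passes to equicontinuity of the quantised operators.

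The principal difficulty is organisational rather than conceptual: ensuring that all constants remain uniform over the unbounded parameter set $\mathbf{P}$. The crucial inputs that make this painless are the uniform hypoellipticity estimate (\ref{est_for_ap}) and the two-sided growth bound (\ref{est_forss}), which encode the $z$-dependence entirely through $1/|\tilde{a}_z|$ and thereby render the subordination machinery of Section \ref{sec_3} directly applicable to the $z$-parametrised family without modification.
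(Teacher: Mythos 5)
Your proposal is correct and follows essentially the same route as the paper: the uniform estimate (\ref{estforthepar}) gives $\{\sum_j q^{(z)}_j\,|\,z\in\mathbf{P}\}\precsim\{1/|\tilde{a}_z|\,|\,z\in\mathbf{P}\}$, Proposition \ref{subordinate} then yields $(i)$ on $Q^c_{Bm_1}$, and $(ii)$ follows from (\ref{estforthepar1}) combined with (\ref{est_bee})/(\ref{est_rou}) and the continuity of the quantisation map (Proposition \ref{continuity}), boundedness giving equicontinuity by barrelledness. The only small deviation is on the compact region $Q_{Bm_1}$, where the paper argues more directly: there $q_z(w)=\sum_{n=0}^{j_0}(1-\chi_{n,R}(w))q^{(z)}_n(w)$ is a finite sum, so (\ref{estforthepar}) immediately gives (\ref{estforthepar1}) without a second appeal to Proposition \ref{subordinate}, though your variant via $|\tilde{a}_z(w)|\asymp 1+|z|$ on compacts also works.
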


\begin{proof} The estimate (\ref{estforthepar}) implies $\{\sum_j q^{(z)}_j|\, z\in\mathbf{P}\}\precsim\{1/|\tilde{a}_z||\, z\in\mathbf{P}\}$ in $FS_{A_p,\rho}^{*,\infty}(\RR^{2d};0)$. Thus, we can apply Proposition \ref{subordinate} to obtain the existence of $R'>0$ such that for each $R\geq R'$, $q_z:=R(\sum_j q^{(z)}_j)\in \Gamma_{A_p,\rho}^{*,\infty}(\RR^{2d})$ and (\ref{estforthepar1}) is valid when $w\in Q^c_{Bm_1}=Q^c_B$, for some $B=B(R)>0$. There exists $j_0\in\ZZ_+$ such that $q_z(w)=\sum_{n=0}^{j_0}(1-\chi_{n,R}(w))q^{(z)}_n(w)$, for all $w\in Q_B$, $z\in\mathbf{P}$. Because of (\ref{estforthepar}) we can conclude the validity of (\ref{estforthepar1}) when $w\in Q_B$ as well, and the proof of $(i)$ is complete.\\
\indent Fix $R\geq R'$ and consider $q_z=R(\sum_j q^{(z)}_j)$, $z\in\mathbf{P}$. As a direct consequence of (\ref{estforthepar1}) and (\ref{est_bee}) (resp. (\ref{est_rou})), we have $\{(1+|z|)q_z|\, z\in\mathbf{P}\}\precsim e^{M(m|\xi|)}e^{M(m|x|)}$ (resp. $\{(1+|z|)q_z|\, z\in\mathbf{P}\}\precsim e^{N_{k_p}(|\xi|)}e^{N_{k_p}(|x|)}$). Hence, Proposition \ref{continuity} proves $(ii)$.
\end{proof}

Fix $R>0$ for which the conclusions in Proposition \ref{uniformparam} and Lemma \ref{lemconofp} hold and denote $q_z=R(\sum_jq^{(z)}_j)\in \Gamma_{A_p,\rho}^{*,\infty}(\RR^{2d})$, $z\in\mathbf{P}$. Since
$\sigma(\overline{\tilde{A}})\subseteq \{\lambda\in\RR|\, \lambda>
1\}$, it follows that $(z+\overline{\tilde{A}})$ is injective for each
$z\in\mathbf{P}$. Hence,  the operator
$\tilde{a}_z^w:\SSS^*(\RR^d)\rightarrow\SSS^*(\RR^d)$ is injective, as well. Moreover,
for given $\varphi\in\SSS^*(\RR^d)$, there exists $g\in
L^2(\RR^d)$ such that $(z+\overline{\tilde{A}})g=\varphi$ (as
$z\in\rho(\overline{\tilde{A}})$), i.e. $\tilde{a}_z^wg=\varphi$.
Since $\tilde{a}_z$ is hypoelliptic, it is globally regular and
hence $g\in\SSS^*(\RR^d)$. Thus $\tilde{a}_{z}^w$ is a continuous
bijection on $\SSS^*(\RR^d)$. As $\SSS^{(M_p)}(\RR^d)$ is an
$(F)$-space and $\SSS^{\{M_p\}}(\RR^d)$ is a $(DFS)$-space, it follows that
$\SSS^*(\RR^d)$ is a Pt\'{a}k space (see \cite[Sect. IV. 8, p.
162]{Sch}). The Pt\'{a}k homomorphism theorem \cite[Corollary 1, p.
164]{Sch} implies that $\tilde{a}_{z}^w$ is topological
isomorphism on $\SSS^*(\RR^d)$, for each $z\in\mathbf{P}$.

Clearly, $(\tilde{a}_z^w)^{-1}$ is the restriction of $(z+\overline{\tilde{A}})^{-1}$ to $\SSS^*(\RR^d)$. Now, observe that
\beqs
(\tilde{a}_z^w)^{-1}=(\mathrm{Id}-q_z^w\tilde{a}_z^w) (\tilde{a}_z^w)^{-1}(\mathrm{Id}-\tilde{a}_z^wq_z^w)+
(\mathrm{Id}-q_z^w\tilde{a}_z^w)q_z^w+q_z^w,
\eeqs
as operators on $\SSS^*(\RR^d)$. Proposition \ref{uniformparam} together with Lemma \ref{lemconofp} $(ii)$ yields that the set
$\{(1+|z|)(\mathrm{Id}-q_z^w\tilde{a}_z^w)q_z^w|\,
z\in\mathbf{P}\}$ is equicontinuous $*$-regularising.
Proposition \ref{uniformparam} implies that for each $z\in\mathbf{P}$, the
operator
$(\mathrm{Id}-q_z^w\tilde{a}_z^w)(\tilde{a}_z^w)^{-1}(\mathrm{Id}-\tilde{a}_z^wq_z^w)$
extends to a $*$-regularising operator. Thus, for each
$z\in\mathbf{P}$, $(\tilde{a}_z^w)^{-1}$ extends to a continuous
operator on $\SSS'^*(\RR^d)$. Since
$\sigma(\overline{\tilde{A}})\subseteq \{\lambda\in\RR|\, \lambda>
1\}$ and $\overline{\tilde{A}}$ is self-adjoint, \cite[Theorem
1.3.5, p. 21]{Fractionalpowersbook} yields that
$\overline{\tilde{A}}$ is sectorial with spectral angle $0$, and
this in turn yields that for each $0<\delta\leq 1$ there exists
$C_{\delta}>0$ such that
\beq\label{inss1}
\|(z+\overline{\tilde{A}})^{-1}\|\leq C_{\delta}/|z|,
\eeq
for all $z\in\{\zeta\in\CC\backslash\{0\}|\,
-\pi+\delta\leq\arg\zeta\leq \pi-\delta\}$. Denote
the particular constant for which (\ref{inss1}) holds true on
$\mathbf{P}_*=\mathbf{P}\backslash\{0\}$ by $\tilde{C}$. Since
$\sigma(\overline{\tilde{A}})\subseteq \{\lambda\in\RR|\, \lambda>
1\}$, we have $\|(z+\overline{\tilde{A}})^{-1}\|\leq C'$, for all $|z|\leq
1$. Now, Proposition \ref{uniformparam} yields that
$\{|z|(\mathrm{Id}-q_z^w\tilde{a}_z^w)(\tilde{a}_z^w)^{-1}(\mathrm{Id}-\tilde{a}_z^wq_z^w)|\,
z\in\mathbf{P}\}$ and
$\{(\mathrm{Id}-q_z^w\tilde{a}_z^w)(\tilde{a}_z^w)^{-1}(\mathrm{Id}-\tilde{a}_z^wq_z^w)|\,
z\in\mathbf{P}\}$ are equicontinuous $*$-regularising and thus,
the same holds for
$\{(1+|z|)(\mathrm{Id}-q_z^w\tilde{a}_z^w)(\tilde{a}_z^w)^{-1} (\mathrm{Id}-\tilde{a}_z^wq_z^w)|\,
z\in\mathbf{P}\}$ as well. Denoting
$S_z=(\mathrm{Id}-q_z^w\tilde{a}_z^w)(\tilde{a}_z^w)^{-1} (\mathrm{Id}-\tilde{a}_z^wq_z^w)+(\mathrm{Id}-q_z^w\tilde{a}_z^w)q_z^w$, we have $(\tilde{a}_z^w)^{-1}=q_z^w+S_z$. These facts, together with Lemma \ref{lemconofp} $(ii)$, prove the following result.

\begin{lemma}\label{kkr17}
The operators $(\tilde{a}_z^w)^{-1}$, $z\in\mathbf{P}$, are continuous on $\SSS^*(\RR^d)$ and they extend to
continuous operators on $\SSS'^*(\RR^d)$. The set $\{(1+|z|)(\tilde{a}_z^w)^{-1}|\,z\in\mathbf{P}\}$ is equicontinuous in $\mathcal{L}(\SSS^*(\RR^d),\SSS^*(\RR^d))$ and in $\mathcal{L}(\SSS'^*(\RR^d),\SSS'^*(\RR^d))$. Furthermore, for each $z\in\mathbf{P}$, $(\tilde{a}_z^w)^{-1}$ is exactly the restriction of $(z+\overline{\tilde{A}})^{-1}$ to $\SSS^*(\RR^d)$.
\end{lemma}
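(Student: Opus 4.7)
My plan is to read the lemma off the operator identity already established just above the statement, namely $(\tilde{a}_z^w)^{-1}=q_z^w+S_z$ on $\SSS^*(\RR^d)$, where
\[
S_z=(\mathrm{Id}-q_z^w\tilde{a}_z^w)(\tilde{a}_z^w)^{-1}(\mathrm{Id}-\tilde{a}_z^wq_z^w)+(\mathrm{Id}-q_z^w\tilde{a}_z^w)q_z^w.
\]
Multiplying by $(1+|z|)$, it suffices to show that $\{(1+|z|)q_z^w\,|\,z\in\mathbf{P}\}$ and $\{(1+|z|)S_z\,|\,z\in\mathbf{P}\}$ are each equicontinuous in both $\mathcal{L}(\SSS^*(\RR^d))$ and $\mathcal{L}(\SSS'^*(\RR^d))$.

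The first family is immediate from Lemma \ref{lemconofp}$(ii)$. For $S_z$, the second summand $(1+|z|)(\mathrm{Id}-q_z^w\tilde{a}_z^w)q_z^w$ is an equicontinuous $*$-regularising family, obtained by composing the uniform $\mathcal{L}(\SSS'^*(\RR^d),\SSS^*(\RR^d))$-bound on $\mathrm{Id}-q_z^w\tilde{a}_z^w$ from Proposition \ref{uniformparam} with the uniform $\mathcal{L}(\SSS'^*(\RR^d))$-bound on $(1+|z|)q_z^w$ from Lemma \ref{lemconofp}$(ii)$; regularity of the composition then gives equicontinuity both in $\mathcal{L}(\SSS^*(\RR^d))$ and in $\mathcal{L}(\SSS'^*(\RR^d))$.

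The crucial step is the first summand of $S_z$: one must show that $\{(1+|z|)(\mathrm{Id}-q_z^w\tilde{a}_z^w)(\tilde{a}_z^w)^{-1}(\mathrm{Id}-\tilde{a}_z^wq_z^w)\,|\,z\in\mathbf{P}\}$ is equicontinuous $*$-regularising. The outer operators already map $\SSS'^*(\RR^d)$ into $\SSS^*(\RR^d)$ equicontinuously by Proposition \ref{uniformparam}, so it is enough to establish the $L^2$-bound $\|(1+|z|)(\tilde{a}_z^w)^{-1}\|_{\mathcal{L}(L^2(\RR^d))}\leq C$ uniformly in $z\in\mathbf{P}$. I would split this in two regimes: for $|z|\leq 1$ the inclusion $\sigma(\overline{\tilde{A}})\subset(1,\infty)$ with self-adjointness gives a uniform bound on $(z+\overline{\tilde{A}})^{-1}$; for $|z|\geq 1$, since $\overline{\tilde{A}}$ is self-adjoint and positive it is sectorial with spectral angle $0$, and \cite[Theorem 1.3.5, p.~21]{Fractionalpowersbook} applied with any $\delta\in(0,\pi/4]$ (note that $\mathbf{P}$ has opening $3\pi/2<2\pi-2\delta$) yields $\|(z+\overline{\tilde{A}})^{-1}\|\leq \tilde C/|z|$ on all of $\mathbf{P}_*$. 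This resolvent estimate is the sole analytic input and is where the specific opening of the sector $\mathbf{P}$ is used; it is the main obstacle.

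Combining the three pieces yields equicontinuity of $\{(1+|z|)(\tilde{a}_z^w)^{-1}\,|\,z\in\mathbf{P}\}$ in both $\mathcal{L}(\SSS^*(\RR^d))$ and $\mathcal{L}(\SSS'^*(\RR^d))$, and in particular produces the claimed continuous extension to $\SSS'^*(\RR^d)$. The identification with $(z+\overline{\tilde{A}})^{-1}|_{\SSS^*(\RR^d)}$ is already recorded in the paragraph preceding the lemma: $\tilde{a}_z^w$ is a topological isomorphism of $\SSS^*(\RR^d)$ via the Pt\'ak homomorphism theorem, and on $\SSS^*(\RR^d)\subseteq D(\overline{\tilde{A}})$ it agrees with $z+\overline{\tilde{A}}$, so inverting gives the identification.
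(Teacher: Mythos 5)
Your proposal is correct and takes essentially the same route as the paper: the same decomposition $(\tilde{a}_z^w)^{-1}=q_z^w+S_z$, with Lemma \ref{lemconofp}$(ii)$ and Proposition \ref{uniformparam} handling $q_z^w$ and the outer factors, and the sectoriality estimate of \cite[Theorem 1.3.5, p.~21]{Fractionalpowersbook} together with the trivial bound for $|z|\leq 1$ supplying the uniform $L^2$ resolvent bound $(1+|z|)\|(z+\overline{\tilde{A}})^{-1}\|\leq C$ used on the middle term. The identification of $(\tilde{a}_z^w)^{-1}$ with the restriction of $(z+\overline{\tilde{A}})^{-1}$ via the Pt\'ak homomorphism theorem is likewise exactly the paper's step.
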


Consider now the uniformly bounded $C_0$-semigroup
$\tilde{T}(t)=e^{-tc}T(t)$, $t\geq 0$. Clearly, its infinitesimal generator
is $-\overline{\tilde{A}}$. Hence, \cite[Theorem 5.2 (c), p.
61]{pazy} proves that $\{\tilde{T}(t)\}_{t\geq 0}$ is analytic (cf. (\ref{inss1})) and
\cite[Theorem 7.7, p. 30]{pazy} yields
\beq\label{sstt151}
\tilde{T}(t)=\frac{1}{2\pi i}\int_{\Lambda}
e^{zt}(z+\overline{\tilde{A}})^{-1}dz,\,\, t>0,
\eeq
where $\Lambda$ is a smooth curve in $\{\zeta\in\CC\backslash\{0\}|\,-\pi+\delta\leq\arg\zeta\leq \pi-\delta\}$ for any $0<\delta<1$, running from $\infty e^{-i\theta}$ to $\infty e^{i\theta}$ for arbitrary but fixed $\pi/2<\theta<\pi-\delta$ and the integral is absolutely convergent for $t>0$ in $\mathcal{L}_b(L^2(\RR^d),L^2(\RR^d))$ (cf. (\ref{inss1})).

\begin{proposition}\label{smsemig}
For each $t\geq 0$, $\tilde{T}(t)\in\mathcal{L}(\SSS^*(\RR^d),\SSS^*(\RR^d))$. Moreover, the mapping $t\mapsto \tilde{T}(t)$ belongs to $ C^{\infty}([0,\infty);\mathcal{L}_b(\SSS^*(\RR^d),\SSS^*(\RR^d)))$ and its derivatives are given by $(d^k/dt^k)\tilde{T}(t)=(-1)^k(\tilde{a}^w)^k\tilde{T}(t)$, $t\geq 0$, $k\in\ZZ_+$.
\end{proposition}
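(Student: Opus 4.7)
The plan is to lift the $L^2$-integral representation (\ref{sstt151}) to $\SSS^*(\RR^d)$ using Lemma \ref{kkr17}. For any fixed $t>0$ and any continuous seminorm $p$ on $\SSS^*(\RR^d)$, the equicontinuity of $\{(1+|z|)(\tilde a_z^w)^{-1}\mid z\in\mathbf{P}\}$ produces a continuous seminorm $q$ on $\SSS^*(\RR^d)$ with $p((\tilde a_z^w)^{-1}\varphi)\leq Cq(\varphi)/(1+|z|)$. Since $\Lambda$ stays at positive distance from the origin and $|e^{zt}|$ decays exponentially at infinity along $\Lambda$ (because $\cos\theta<0$), the integrand $e^{zt}(\tilde a_z^w)^{-1}\varphi$ admits an integrable scalar majorant. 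Hence the integral converges absolutely in $\mathcal{L}_b(\SSS^*(\RR^d),\SSS^*(\RR^d))$, and the continuous injection $\SSS^*(\RR^d)\hookrightarrow L^2(\RR^d)$ forces the resulting operator to coincide with $\tilde T(t)$. This gives $\tilde T(t)\in\mathcal{L}(\SSS^*(\RR^d))$ for $t>0$, while $\tilde T(0)=\mathrm{Id}$ lies there trivially.

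Smoothness on $(0,\infty)$ follows by differentiating under the integral: the $k$-th formal derivative is $\frac{1}{2\pi i}\int_\Lambda z^k e^{zt}(\tilde a_z^w)^{-1}dz$, and the factor $|z|^k$ is absorbed by the exponential decay of $|e^{zt}|$ on $\Lambda$, so dominated convergence justifies both termwise differentiation and continuity of the derivatives in $t>0$. Because $\tilde a_z^w=z+\tilde a^w$ on $\SSS^*(\RR^d)$, one has the resolvent identity
\[
z(\tilde a_z^w)^{-1}=\mathrm{Id}-\tilde a^w(\tilde a_z^w)^{-1}.
\]
Together with the elementary contour computation $\int_\Lambda e^{zt}dz=0$ for $t>0$ (deform $\Lambda$ to the two rays $\{re^{\pm i\theta}:r>0\}$ and use $\int_0^\infty e^{re^{\pm i\theta}t}e^{\pm i\theta}dr=-1/t$), this yields
\[
\frac{d}{dt}\tilde T(t)=\frac{1}{2\pi i}\int_\Lambda ze^{zt}(\tilde a_z^w)^{-1}dz=-\tilde a^w\tilde T(t).
\]
The formula $(d^k/dt^k)\tilde T(t)=(-1)^k(\tilde a^w)^k\tilde T(t)$ then follows by induction, since each $(\tilde a^w)^k\in\mathcal{L}(\SSS^*(\RR^d))$ is $t$-independent and commutes with $d/dt$.

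The main obstacle is the behaviour at $t=0$, where (\ref{sstt151}) itself diverges. To handle it, I would exploit the consequence
\[
\tilde T(t)-\mathrm{Id}=-\tilde a^w\cdot\frac{1}{2\pi i}\int_\Lambda \frac{e^{zt}}{z}(\tilde a_z^w)^{-1}dz,\quad t>0,
\]
of the resolvent identity combined with $\frac{1}{2\pi i}\int_\Lambda e^{zt}/z\,dz=1$ (the origin is enclosed by $\Lambda$ closed through a left-half-plane arc, on which $|e^{zt}|$ decays). On this integrand, the bound $C|e^{zt}|q(\varphi)/(|z|(1+|z|))$ is integrable along $\Lambda$ uniformly in $t\in[0,T]$ because $|z|$ is bounded below on $\Lambda$. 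Hence $\{\tilde T(t)\mid t\in[0,T]\}$ is equicontinuous in $\mathcal{L}(\SSS^*(\RR^d))$, and dominated convergence gives pointwise convergence of $\tilde T(t)\varphi$ in $\SSS^*(\RR^d)$ as $t\to 0^+$; the strong $L^2$-continuity of the $C_0$-semigroup then identifies the $\SSS^*$-limit with $\varphi$ by uniqueness in $L^2$. Since $\SSS^*(\RR^d)$ is Montel (an $(F)$-Montel space in the Beurling case, a $(DFS)$-space in the Roumieu case), the combination of equicontinuity and pointwise convergence upgrades to convergence in $\mathcal{L}_b(\SSS^*(\RR^d))$, proving continuity at $0$. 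Finally, because $\tilde T^{(k)}(t)=(-1)^k(\tilde a^w)^k\tilde T(t)$ is the composition of the fixed operator $(\tilde a^w)^k\in\mathcal{L}(\SSS^*(\RR^d))$ with the continuous family $\tilde T$, it extends continuously to $[0,\infty)$; the right-hand derivatives at $0$ are then recovered by applying the fundamental theorem of calculus to this continuous derivative, completing the proof.
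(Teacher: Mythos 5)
Your argument is essentially correct, but it follows a genuinely different route from the paper's at the two delicate points, namely the behaviour as $t\to0^{+}$ and the derivative formula. The paper never differentiates under the contour integral: it shifts \eqref{sstt151} to the fixed contour $\tilde{\Lambda}\subseteq\mathbf{P}_*$ to treat $t>0$, then to the \emph{$t$-dependent} contour $\tilde{\Lambda}_t$ to get the uniform bound $\sup_{t>0}|\langle g,\tilde{T}(t)\varphi\rangle|<\infty$, and finally abandons the contour representation altogether, deriving both the continuity at $0$ and $(d/dt)\tilde{T}(t)=-\tilde{a}^w\tilde{T}(t)$ on all of $[0,\infty)$ from the $C_0$-semigroup identity \eqref{kkff1}, $\tilde{T}(t)\varphi-\varphi=-\int_0^t\tilde{a}^w\tilde{T}(s)\varphi\,ds$. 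You instead stay with the Cauchy integral: the resolvent identity $z(\tilde{a}_z^w)^{-1}=\mathrm{Id}-\tilde{a}^w(\tilde{a}_z^w)^{-1}$ together with $\frac{1}{2\pi i}\int_\Lambda e^{zt}z^{-1}dz=1$ gains the extra factor $1/|z|$ that makes the integral uniformly convergent down to $t=0$ (replacing the paper's contour $\tilde{\Lambda}_t$), and the scalar computation $\int_\Lambda e^{zt}dz=0$ yields the derivative formula for $t>0$, with $t=0$ recovered by continuity of the derivative and the fundamental theorem of calculus. Both routes rest on the same key input, Lemma \ref{kkr17}, and on the same Banach--Steinhaus/Montel upgrade from $\mathcal{L}_\sigma$ to $\mathcal{L}_b$ and the $L^2$-identification of limits; your version is arguably more streamlined, while the paper's avoids having to make sense of operator-valued contour integrals in $\SSS^*(\RR^d)$.

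Two caveats you should address to make this airtight. First, Lemma \ref{kkr17} only gives equicontinuity for $z\in\mathbf{P}$, i.e.\ $|\arg z|\leq 3\pi/4$, whereas the contour $\Lambda$ in \eqref{sstt151} may run at angles up to $\pi-\delta$; you must first deform $\Lambda$ into $\mathbf{P}_*$ (e.g.\ the paper's $\tilde{\Lambda}$ with rays at angle $\pm3\pi/4$, on which $|e^{zt}|$ still decays exponentially for $t>0$). Second, the statements ``the integral converges absolutely in $\mathcal{L}_b(\SSS^*(\RR^d),\SSS^*(\RR^d))$'' and ``dominated convergence gives pointwise convergence in $\SSS^*$'' require justification of the vector-valued integrals and limits in a non-normable (and, in the Roumieu case, non-metrizable) space: you need continuity of $z\mapsto(\tilde{a}_z^w)^{-1}$ in $\mathcal{L}_b$ (which does follow from the resolvent identity $(\tilde{a}_z^w)^{-1}-(\tilde{a}_{z'}^w)^{-1}=(z'-z)(\tilde{a}_z^w)^{-1}(\tilde{a}_{z'}^w)^{-1}$ and Lemma \ref{kkr17}), completeness of $\mathcal{L}_b(\SSS^*(\RR^d),\SSS^*(\RR^d))$, and the estimate $p(\int)\leq\int p$ for each continuous seminorm, together with Proposition \ref{continuity} to pull $\tilde{a}^w$ through the integral. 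This is exactly the layer of work the paper carries out by hand via the scalar pairings $\langle g,I_k(t,\varphi)\rangle$ and polars of bounded sets; it is fixable, but it is not free, and your write-up currently treats it as such.
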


\begin{proof} Because of the analyticity of $z\mapsto (z+\overline{\tilde{A}})^{-1}$, we can shift the path of integration without changing the value of the integral in (\ref{sstt151}) to the curve $\tilde{\Lambda}=\Lambda_1\cup\Lambda_2\cup\Lambda_3$, where $\Lambda_1=\{re^{-i3\pi/4}|\, 1\leq r<\infty\}$, $\Lambda_2=\{e^{i\theta}|\, -3\pi/4\leq \theta\leq3\pi/4\}$ and
$\Lambda_3=\{re^{i3\pi/4}|\, 1\leq r<\infty\}$. Clearly $\tilde{\Lambda}\subseteq \mathbf{P}_*$. For $\varphi\in \SSS^*(\RR^d)$, we have
\beqs
\tilde{T}(t)\varphi&=&\frac{1-i}{2\pi}\int_{1/\sqrt{2}}^{\infty}e^{-rt-irt} (\tilde{a}_{-r(1+i)}^w)^{-1}\varphi dr +\frac{1}{2\pi}\int_{-3\pi/4}^{3\pi/4}e^{te^{i\theta}}e^{i\theta} (\tilde{a}_{e^{i\theta}}^w)^{-1}\varphi d\theta\\
&{}&+ \frac{1+i}{2\pi}\int_{1/\sqrt{2}}^{\infty}e^{-rt+irt} (\tilde{a}_{-r(1-i)}^w)^{-1}\varphi dr\\
&=&I_1(t,\varphi)+I_2(t,\varphi)+I_3(t,\varphi),
\eeqs
with absolutely convergent integrals for $t>0$ in $L^2(\RR^d)$ (cf. (\ref{inss1}); recall $(\tilde{a}^w_z)^{-1}\varphi=(z+\overline{\tilde{A}})^{-1}\varphi$, for $z\in\mathbf{P}$, $\varphi\in\SSS^*(\RR^d)$). By the properties of the Bochner integral, for each $g\in L^2(\RR^d)$, we have
\beq\label{eqffp}
\langle g,I_1(t,\varphi)\rangle&=&\frac{1-i}{2\pi}\int_{1/\sqrt{2}}^{\infty}e^{-rt-irt} \langle g,(\tilde{a}_{-r(1+i)}^w)^{-1}\varphi\rangle dr.
\eeq
Our immediate goal is to prove $I_1(t,\varphi)\in\SSS^*(\RR^d)$ for each $t>0$ and $\varphi\in\SSS^*(\RR^d)$. Thus, fix $t>0$ and denote
\beq\label{kk117ks}
C_t=\int_{1/\sqrt{2}}^{\infty}e^{-rt} dr>0.
\eeq
Let $\varphi\in\SSS^*(\RR^d)$ and $\varepsilon>0$ be arbitrary but fixed. By Lemma \ref{kkr17}, the set $\tilde{H}=\{(1+|z|)(\tilde{a}_z^w)^{-1}|\,z\in\mathbf{P}\}$ is equicontinuous in $\mathcal{L}(\SSS^*(\RR^d),\SSS^*(\RR^d))$ and hence $B=\{(1+r\sqrt{2})(\tilde{a}_{-r(1+i)}^w)^{-1}\varphi|\,r\geq1/\sqrt{2}\}$ is bounded in $\SSS^*(\RR^d)$. Thus, the absolute polar of $(C_t/\varepsilon)B$, which we denote by $W=((C_t/\varepsilon)B)^{\circ}$, is a neighbourhood of zero in $\SSS'^*(\RR^d)$. Hence, employing (\ref{eqffp}) for $g\in W\cap L^2(\RR^d)$, we have
\beqs
|\langle g,I_1(t,\varphi)\rangle| &\leq&\int_{1/\sqrt{2}}^{\infty}e^{-rt} \left|\langle g,(\tilde{a}_{-r(1+i)}^w)^{-1}\varphi\rangle\right| dr\leq \varepsilon.
\eeqs
Thus, the mapping $g\mapsto \langle g,I_1(t,\varphi)\rangle$, $L^2(\RR^d)\rightarrow\CC$, is continuous when we equip $L^2(\RR^d)$ with the topology induced on it by $\SSS'^*(\RR^d)$. Hence $g\mapsto \langle g,I_1(t,\varphi)\rangle$ can be continuously extended to a functional on $\SSS'^*(\RR^d)$, i.e. $I_1(t,\varphi)\in\SSS^*(\RR^d)$. Let $g\in\SSS'^*(\RR^d)$. There exist $g_j\in L^2(\RR^d)$, $j\in\ZZ_+$, such that $g_j\rightarrow g$ in $\SSS'^*(\RR^d)$ ($L^2(\RR^d)$ is sequentially dense in $\SSS'^*(\RR^d)$). The function $r\mapsto \langle g,(\tilde{a}_{-r(1+i)}^w)^{-1}\varphi\rangle$, $[1/\sqrt{2},\infty)\rightarrow \CC$, is measurable since it is the pointwise limit of the sequence of continuous functions $r\mapsto \langle g_j,(\tilde{a}_{-r(1+i)}^w)^{-1}\varphi\rangle$, $[1/\sqrt{2},\infty)\rightarrow\CC$. Because of the equicontinuity of $\tilde{H}$ and the fact that $\{g_j|\, j\in\ZZ_+\}$ is bounded in $\SSS'^*(\RR^d)$, we can conclude the existence of $C'>0$ such that $|\langle g_j,(\tilde{a}_{-r(1+i)}^w)^{-1}\varphi\rangle|\leq C'$, for all $r\in[1/\sqrt{2},\infty)$, $j\in\ZZ_+$. Applying the dominated convergence theorem to (\ref{eqffp}) with $g_j$ in place of $g$, we can conclude that (\ref{eqffp}) is valid for $g\in\SSS'^*(\RR^d)$. Next, we prove that for each $t>0$, the mapping $\varphi\mapsto I_1(t,\varphi)$, $\SSS^*(\RR^d)\rightarrow\SSS^*(\RR^d)$, is continuous. Let $V$ be a closed convex circled neighbourhood of zero in $\SSS^*(\RR^d)$, which, without loss of generality, we can assume to be the absolute polar $B'^{\circ}$ of a bounded set $B'$ in $\SSS'^*(\RR^d)$. Since
\beq\label{krt1155991111}
\tilde{H}'=\{(1+|z|)\,\, {}^t((\tilde{a}_z^w)^{-1})|\,z\in\mathbf{P}\}
\eeq
is equicontinuous in $\mathcal{L}(\SSS'^*(\RR^d),\SSS'^*(\RR^d))$ (cf. Lemma \ref{kkr17} and \cite[Theorem 6, p. 138]{kothe2}), the set $B'_1=\{(1+|z|)\,\, {}^t((\tilde{a}_z^w)^{-1})g|\,z\in\mathbf{P},\, g\in B'\}$ is bounded in $\SSS'^*(\RR^d)$. Hence $V_1=(C_tB'_1)^{\circ}$ is a neighbourhoods of zero in $\SSS^*(\RR^d)$ (see (\ref{kk117ks}) for the definition of $C_t$). Employing (\ref{eqffp}), one easily verifies $|\langle g,I_1(t,\varphi)\rangle|\leq 1$, for all $\varphi\in V_1$ and $g\in B'$, which proves the desired continuity. Next, we prove that the mapping $t\mapsto I_1(t,\cdot)$ belongs to $ C((0,\infty);\mathcal{L}_b(\SSS^*(\RR^d),\SSS^*(\RR^d)))$. Fix $t_0>0$ and let $\delta>0$ be small enough such that $[t_0-\delta,t_0+\delta]\subseteq (0,\infty)$. Consider the following subset of $\mathcal{L}(\SSS^*(\RR^d),\SSS^*(\RR^d))$:
\beqs
H_1=\{\varphi\mapsto I_1(t,\varphi)|\, t\in[t_0-\delta,t_0+\delta]\}.
\eeqs
Employing (\ref{eqffp}) together with the fact that $\tilde{H}'$ is equicontinuous in $\mathcal{L}(\SSS'^*(\RR^d),\SSS'^*(\RR^d))$ (see (\ref{krt1155991111}) for the definition of $\tilde{H}'$), one can easily derive that $H_1$ is a bounded set in $\mathcal{L}_{\sigma}(\SSS^*(\RR^d),\SSS^*(\RR^d))$ and hence equicontinuous ($\SSS^*(\RR^d)$ is barrelled). Fix $\varphi\in\SSS^*(\RR^d)$ and a neighbourhood of zero $V$ in $\SSS^*(\RR^d)$ for which we may assume that it is the absolute polar $B'^{\circ}$ of a convex circled bounded subset $B'$ of $\SSS'^*(\RR^d)$. Let $1\leq C<\infty$ be large enough such that $C\geq \sup\{|\langle g,(\tilde{a}^w_z)^{-1}\varphi\rangle||\, z\in\mathbf{P},\, g\in B'\}$. Then, employing (\ref{eqffp}), we have
\beqs
\sup_{g\in B'}|\langle g, I_1(t,\varphi)-I_1(t_0,\varphi)\rangle|\leq C\int_{1/\sqrt{2}}^{\infty} e^{-r(t_0-\delta)}\left|e^{-r(t-t_0+\delta)}e^{-irt}-e^{-r\delta}e^{-irt_0}\right| dr,
\eeqs
for all $t\in[t_0-\delta,t_0+\delta]$. The dominated convergence theorem implies that there exists $0<\varepsilon<\delta$ such that $I_1(t,\varphi)-I_1(t_0,\varphi)\in B'^{\circ}=V$, for all $t\in[t_0-\varepsilon,t_0+\varepsilon]$. Hence $I_1(t,\cdot)\rightarrow I_1(t_0,\cdot)$, as $t\rightarrow t_0$, in $\mathcal{L}_{\sigma}(\SSS^*(\RR^d),\SSS^*(\RR^d))$. As $H_1$ is equicontinuous, the Banach-Steinhaus theorem \cite[Theorem 4.5, p. 85]{Sch} implies that the convergence holds in the topology of precompact convergence and, as $\SSS^*(\RR^d)$ is Montel, it also holds in $\mathcal{L}_b(\SSS^*(\RR^d),\SSS^*(\RR^d))$. This proves the continuity of the mapping $t\mapsto I_1(t,\cdot)$, $(0,\infty)\rightarrow \mathcal{L}_b(\SSS^*(\RR^d),\SSS^*(\RR^d))$.

In an analogous fashion one proves that for each $t>0$, the mappings $\varphi\mapsto I_2(t,\varphi)$ and $\varphi\mapsto I_3(t,\varphi)$ belong to $\mathcal{L}(\SSS^*(\RR^d),\SSS^*(\RR^d))$ and the mappings $t\mapsto I_2(t,\cdot)$ and $t\mapsto I_3(t,\cdot)$, $(0,\infty)\rightarrow\mathcal{L}_b(\SSS^*(\RR^d),\SSS^*(\RR^d))$, are continuous.

Thus, we obtain $\tilde{T}(t)\in\mathcal{L}(\SSS^*(\RR^d),\SSS^*(\RR^d))$, for each $t>0$, and also $t\mapsto \tilde{T}(t)\in C((0,\infty);\mathcal{L}_b(\SSS^*(\RR^d),\SSS^*(\RR^d)))$. Next, we prove the continuity at $0$. For each $t>0$, we shift the path of integration in (\ref{sstt151}) to $\tilde{\Lambda}_t=\tilde{\Lambda}_{1,t}\cup\tilde{\Lambda}_{2,t}\cup\tilde{\Lambda}_{3,t}$, where $\tilde{\Lambda}_{1,t}=\{re^{-i3\pi/4}|\, 1/t\leq r<\infty\}$, $\tilde{\Lambda}_{2,t}=\{t^{-1}e^{i\theta}|\, -3\pi/4\leq \theta\leq3\pi/4\}$ and
$\tilde{\Lambda}_{3,t}=\{re^{i3\pi/4}|\, 1/t\leq r<\infty\}$. Clearly $\tilde{\Lambda}_t\subseteq \mathbf{P}_*$. For $\varphi\in\SSS^*(\RR^d)$, we have
\beqs
\tilde{T}(t)\varphi&=&\frac{1-i}{2\pi}\int_{(t\sqrt{2})^{-1}}^{\infty}e^{-rt-irt} (\tilde{a}_{-r(1+i)}^w)^{-1}\varphi dr +\frac{1}{2\pi t}\int_{-3\pi/4}^{3\pi/4}e^{e^{i\theta}}e^{i\theta} (\tilde{a}_{t^{-1}e^{i\theta}}^w)^{-1}\varphi d\theta\\
&{}&+ \frac{1+i}{2\pi}\int_{(t\sqrt{2})^{-1}}^{\infty}e^{-rt+irt} (\tilde{a}_{-r(1-i)}^w)^{-1}\varphi dr\\
&=&\tilde{I}_1(t,\varphi)+\tilde{I}_2(t,\varphi)+\tilde{I}_3(t,\varphi).
\eeqs
Analogously as above, one establishes that, for each $t>0$ and $\varphi\in\SSS^*(\RR^d)$,  one has $\tilde{I}_1(t,\varphi),$ $\tilde{I}_2(t,\varphi),$ $\tilde{I}_3(t,\varphi)\in\SSS^*(\RR^d)$. By similar techniques as in the proof of the validity of (\ref{eqffp}) for $g\in\SSS'^*(\RR^d)$, one can prove that for each $g\in\SSS'^*(\RR^d)$, $\varphi\in\SSS^*(\RR^d)$ and $t>0$ we have
\beqs
\langle g,\tilde{I}_1(t,\varphi)\rangle&=&\frac{1-i}{2\pi}\int_{(t\sqrt{2})^{-1}}^{\infty}e^{-rt-irt} \langle g,(\tilde{a}_{-r(1+i)}^w)^{-1}\varphi\rangle dr,\\
\langle g,\tilde{I}_2(t,\varphi)\rangle&=&\frac{1}{2\pi t}\int_{-3\pi/4}^{3\pi/4}e^{e^{i\theta}}e^{i\theta} \langle g,(\tilde{a}_{t^{-1}e^{i\theta}}^w)^{-1}\varphi\rangle d\theta,\\
\langle g,\tilde{I}_3(t,\varphi)\rangle&=&\frac{1+i}{2\pi}\int_{(t\sqrt{2})^{-1}}^{\infty}e^{-rt+irt} \langle g,(\tilde{a}_{-r(1-i)}^w)^{-1}\varphi\rangle dr.
\eeqs
Fix $\varphi\in\SSS^*(\RR^d)$ and a bounded subset $B'$ of $\SSS'^*(\RR^d)$. The equicontinuity of $\tilde{H}'$ (cf. (\ref{krt1155991111})) implies the existence of $C'>0$ such that $|\langle g,(\tilde{a}_{-r(1+i)}^w)^{-1}\varphi\rangle|\leq C'/(1+r\sqrt{2})$, for all $g\in B'$, $r\in[0,\infty)$, and hence, a change of variables yields
\beqs
|\langle g,\tilde{I}_1(t,\varphi)\rangle|\leq \frac{C'}{\pi}\int_{1/\sqrt{2}}^{\infty} \frac{e^{-s}}{t+s\sqrt{2}}ds\leq C'\int_{1/\sqrt{2}}^{\infty} e^{-s}ds\leq C',
\eeqs
for all $g\in B'$, $t>0$. Similarly, there exists $C''>0$ such that $|\langle g,\tilde{I}_3(t,\varphi)\rangle|\leq C''$, for all $g\in B'$, $t>0$. Again, the equicontinuity of $\tilde{H}'$ yields the existence of $C'''>0$ such that $|\langle g,(\tilde{a}_{t^{-1}e^{i\theta}}^w)^{-1}\varphi\rangle|\leq C'''/(1+t^{-1})$, for all $g\in B'$, $\theta\in[-3\pi/4,3\pi/4]$, $t>0$. Hence
\beqs
|\langle g,\tilde{I}_2(t,\varphi)\rangle|\leq\frac{C'''}{2\pi t}\int_{-3\pi/4}^{3\pi/4} \frac{e^{\cos\theta}}{1+t^{-1}}d\theta\leq 3eC'''/4,
\eeqs
for all $g\in B'$, $t>0$. We conclude that there exists $C>0$ such that $|\langle g, \tilde{T}(t)\varphi\rangle|\leq C$, for all $g\in B'$, $t>0$. This proves that $\{\tilde{T}(t)|\, t>0\}$ is  bounded and hence equicontinuous in $\mathcal{L}(\SSS^*(\RR^d),\SSS^*(\RR^d))$. Consequently, the same holds for $\{\tilde{T}(t)|\, t\geq0\}$ (since $\tilde{T}(0)=\mathrm{Id}$). This immediately yields the equicontinuity of $\{\tilde{a}^w\tilde{T}(t)|\, t\geq0\}$ in $\mathcal{L}(\SSS^*(\RR^d),\SSS^*(\RR^d))$. Since $\{\tilde{T}(t)\}_{t\geq 0}$ is a $C_0$-semigroup with infinitesimal generator $-\overline{\tilde{A}}$, we have
\beqs
\tilde{T}(t)\varphi-\varphi=-\int_0^t \overline{\tilde{A}}\tilde{T}(s)\varphi ds=-\int_0^t \tilde{a}^w\tilde{T}(s)\varphi ds,\,\, \forall \varphi\in\SSS^*(\RR^d),\, t>0.
\eeqs
Employing the equicontinuity of $\{\tilde{a}^w\tilde{T}(t)|\, t\geq0\}$ in $\mathcal{L}(\SSS^*(\RR^d),\SSS^*(\RR^d))$ and using similar arguments as in the proof of the validity of (\ref{eqffp}) for $g\in\SSS'^*(\RR^d)$, one can prove that for each $g\in\SSS'^*(\RR^d)$, $\varphi\in\SSS^*(\RR^d)$ and $t>0$ we have
\beq\label{kkff1}
\langle g,\tilde{T}(t)\varphi-\varphi\rangle=-\int_0^t\langle g,\tilde{a}^w\tilde{T}(s)\varphi\rangle ds.
\eeq
For fixed $\varphi\in\SSS^*(\RR^d)$ and a bounded subset $B'$ of $\SSS'^*(\RR^d)$, the equicontinuity of the set $\{\tilde{a}^w\tilde{T}(t)|\, t\geq0\}$ in $\mathcal{L}(\SSS^*(\RR^d),\SSS^*(\RR^d))$ proves the existence of $C>0$ such that $|\langle g,\tilde{a}^w\tilde{T}(t)\varphi\rangle|\leq C$, for all $g\in B'$, $t\geq 0$. Thus, (\ref{kkff1}) yields $|\langle g,\tilde{T}(t)\varphi-\varphi\rangle|\leq Ct$, for all $g\in B'$, $t>0$. Hence, there exists $\varepsilon>0$ such that for all $0<t<\varepsilon$, $\tilde{T}(t)\varphi-\varphi\in B'^{\circ}$, which proves that $\tilde{T}(t)\rightarrow \tilde{T}(0)=\mathrm{Id}$, as $t\rightarrow 0^+$, in $\mathcal{L}_{\sigma}(\SSS^*(\RR^d),\SSS^*(\RR^d))$. Since $\{\tilde{T}(t)|\, t\geq0\}$ is equicontinuous in $\mathcal{L}(\SSS^*(\RR^d),\SSS^*(\RR^d))$, the Banach-Steinhaus theorem \cite[Theorem 4.5, p. 85]{Sch} yields that the convergence holds in the topology of precompact convergence and, as $\SSS^*(\RR^d)$ is Montel, the convergence also holds in $\mathcal{L}_b(\SSS^*(\RR^d),\SSS^*(\RR^d))$. This proves that $t\mapsto \tilde{T}(t)$ belongs to $ C([0,\infty);\mathcal{L}_b(\SSS^*(\RR^d),\SSS^*(\RR^d)))$.\\
\indent Observe now that for $t>t_0\geq 0$, $g\in\SSS'^*(\RR^d)$ and $\varphi\in\SSS^*(\RR^d)$, (\ref{kkff1}) implies
\begin{align}
\label{kkr1515}
&\frac{\langle g,\tilde{T}(t)\varphi-\tilde{T}(t_0)\varphi\rangle}{t-t_0}+\langle g,\tilde{a}^w\tilde{T}(t_0)\varphi\rangle
\\
&
\nonumber
\quad \quad =-\frac{1}{t-t_0}\int_{t_0}^t\langle g,\tilde{a}^w(\tilde{T}(s)-\tilde{T}(t_0))\varphi\rangle ds.
\end{align}
Let $B$ be a bounded subset of $\SSS^*(\RR^d)$ and $V$ a neighbourhood of zero in $\SSS^*(\RR^d)$. Consider the neighbourhood of zero $M(B,V)=\{S\in\mathcal{L}(\SSS^*(\RR^d),\SSS^*(\RR^d))|\, S(B)\subseteq V\}$ in $\mathcal{L}_b(\SSS^*(\RR^d),\SSS^*(\RR^d))$.  We may of course assume $V$ is the absolute polar $B'^{\circ}$ of a bounded set $B'$ in $\SSS'^*(\RR^d)$. Then $B'_1={}^t\,(\tilde{a}^w)B'$ is bounded in $\SSS'^*(\RR^d)$ and hence its absolute polar $V_1=B'^{\circ}_1$ is a neighbourhood of zero in $\SSS^*(\RR^d)$. Since $t\mapsto \tilde{T}(t)\in C([0,\infty);\mathcal{L}_b(\SSS^*(\RR^d),\SSS^*(\RR^d)))$, there exists $\varepsilon>0$ such that for all $t\in[t_0,t_0+\varepsilon]$, we have $\tilde{T}(t)-\tilde{T}(t_0)\in M(B,V_1)$. Thus, (\ref{kkr1515}) implies $(t-t_0)^{-1}(\tilde{T}(t)-\tilde{T}(t_0))+\tilde{a}^w\tilde{T}(t_0)\in M(B,V)$, for all $t\in(t_0,t_0+\varepsilon]$, i.e. the right derivative of $t\mapsto\tilde{T}(t)$, $[0,\infty)\rightarrow \mathcal{L}_b(\SSS^*(\RR^d),\SSS^*(\RR^d))$, is $-\tilde{a}^w\tilde{T}(t_0)$. Similarly, the left derivative at $t_0>0$ is $-\tilde{a}^w\tilde{T}(t_0)$. Hence, $t\mapsto\tilde{T}(t)$, $[0,\infty)\rightarrow \mathcal{L}_b(\SSS^*(\RR^d),\SSS^*(\RR^d))$, is differentiable and $(d/dt)\tilde{T}(t)=-\tilde{a}^w\tilde{T}(t)$. As $t\mapsto -\tilde{a}^w\tilde{T}(t)$ is continuous, $t\mapsto \tilde{T}(t)$ is of class $ C^1$ and now, the equality $(d/dt)\tilde{T}(t)=-\tilde{a}^w\tilde{T}(t)$ readily implies that $t\mapsto \tilde{T}(t)$ is in $ C^{\infty}([0,\infty);\mathcal{L}_b(\SSS^*(\RR^d),\SSS^*(\RR^d)))$ and $(d^k/dt^k)\tilde{T}(t)=(-1)^k(\tilde{a}^w)^k\tilde{T}(t)$, $k\in\ZZ_+$.
\end{proof}

As a direct consequence of the previous proposition we then have,

\begin{theorem}\label{semi-group-pse}
We have $T(t)\in\mathcal{L}(\SSS^*(\RR^d),\SSS^*(\RR^d))$ for each $t\geq 0$. Moreover, the mapping $t\mapsto T(t)$ belongs to $ C^{\infty}([0,\infty);\mathcal{L}_b(\SSS^*(\RR^d),\SSS^*(\RR^d)))$ and one has $(d^k/dt^k) T(t)=(-1)^k(a^w)^kT(t)$, $t\geq 0$, $k\in\ZZ_+$.
\end{theorem}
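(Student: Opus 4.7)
The plan is to reduce the theorem directly to Proposition \ref{smsemig} via the scalar identity $T(t) = e^{tc}\tilde{T}(t)$, which holds for all $t \geq 0$ by the very definition $\tilde{T}(t) = e^{-tc}T(t)$. Since $t \mapsto e^{tc}$ is a smooth scalar-valued function and, by Proposition \ref{smsemig}, the map $t \mapsto \tilde{T}(t)$ lies in $C^{\infty}([0,\infty); \mathcal{L}_b(\SSS^*(\RR^d), \SSS^*(\RR^d)))$ with each $\tilde{T}(t) \in \mathcal{L}(\SSS^*(\RR^d), \SSS^*(\RR^d))$, both properties are immediately inherited by $T(t)$: it maps $\SSS^*(\RR^d)$ continuously into itself, and $t \mapsto T(t)$ is smooth in the topology of bounded convergence.

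For the derivative formula, I would first verify it at $k = 1$ by differentiating the product $e^{tc}\tilde{T}(t)$ in $\mathcal{L}_b(\SSS^*(\RR^d), \SSS^*(\RR^d))$ and invoking the formula $(d/dt)\tilde{T}(t) = -\tilde{a}^w \tilde{T}(t)$ from Proposition \ref{smsemig}:
\begin{equation*}
\frac{d}{dt} T(t) = c\, T(t) - e^{tc} \tilde{a}^w \tilde{T}(t) = c\, T(t) - \tilde{a}^w T(t),
\end{equation*}
where in the last step I use that the fixed continuous operator $\tilde{a}^w$ commutes with scalar multiplication by $e^{tc}$. Since $\tilde{a}^w = a^w + c\,\mathrm{Id}$ on $\SSS^*(\RR^d)$, the right-hand side collapses to $-a^w T(t)$. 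An induction on $k$, using that $a^w \in \mathcal{L}(\SSS^*(\RR^d), \SSS^*(\RR^d))$ is independent of $t$ and that postcomposition with a fixed continuous linear operator is a continuous linear operation on $\mathcal{L}_b(\SSS^*(\RR^d), \SSS^*(\RR^d))$ (and so preserves smoothness), then yields $(d^k/dt^k) T(t) = (-1)^k (a^w)^k T(t)$ for every $k \in \ZZ_+$.

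There is essentially no substantive obstacle here: the heavy analytic lifting --- the analyticity of the resolvent family $(z+\overline{\tilde{A}})^{-1}$ in $z$, the contour integral representation \eqref{sstt151}, the equicontinuity of $\{(1+|z|)(\tilde{a}_z^w)^{-1}\mid z\in\mathbf{P}\}$ on $\SSS^*(\RR^d)$, and the passage to a smooth semigroup in $\mathcal{L}_b$ --- has already been carried out in Proposition \ref{smsemig} for the shifted operator $\overline{\tilde{A}}$. The present statement is essentially the observation that multiplication by the scalar factor $e^{tc}$ undoes the shift $\tilde{a} = a + c$. The only minor point worth recording in a full write-up is the commutation $\frac{d}{dt}(\tilde{a}^w S(t)) = \tilde{a}^w \frac{d}{dt}S(t)$ for $S \in C^1([0,\infty); \mathcal{L}_b(\SSS^*(\RR^d), \SSS^*(\RR^d)))$, which is automatic because composition with a fixed continuous linear map is itself a continuous linear operation on $\mathcal{L}_b(\SSS^*(\RR^d), \SSS^*(\RR^d))$.
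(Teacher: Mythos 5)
Your proposal is correct and follows exactly the paper's route: the paper states Theorem \ref{semi-group-pse} as a direct consequence of Proposition \ref{smsemig}, the implicit argument being precisely the identity $T(t)=e^{tc}\tilde{T}(t)$, the product rule in $\mathcal{L}_b(\SSS^*(\RR^d),\SSS^*(\RR^d))$, and $\tilde{a}^w=a^w+c\,\mathrm{Id}$, which you have merely written out in detail.
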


Since $T(t)$ solves (\ref{system3}) with $\tilde{\mathbf{K}}(t)=0$, we obtain
\beqs
(\mathbf{u}(t))^w\varphi-T(t)\varphi=\int_0^t T(t-s)\tilde{\mathbf{K}}(s)\varphi ds,\,\, \varphi\in\SSS^*(\RR^d).
\eeqs
Theorem \ref{semi-group-pse} then implies that for each $t>0$, the mapping $s\mapsto T(t-s)\tilde{\mathbf{K}}(s)$ belongs to $C^{\infty}([0,t];\mathcal{L}_b(\SSS'^*(\RR^d),\SSS^*(\RR^d)))$. For $t\geq0$ and $f\in\SSS'^*(\RR^d)$, define
\beqs
\mathbf{Q}(t)f=\int_0^t T(t-s)\tilde{\mathbf{K}}(s)fds\in L^2(\RR^d).
\eeqs
Similarly as in the proof of Proposition \ref{smsemig}, one verifies $\mathbf{Q}(t)f\in\SSS^*(\RR^d)$ and, for each $g\in\SSS'^*(\RR^d)$,
\beq\label{kfr97}
\langle g,\mathbf{Q}(t)f\rangle=\int_0^t \langle g,T(t-s)\tilde{\mathbf{K}}(s)f\rangle ds.
\eeq
Again, employing analogous techniques as in the proof of Proposition \ref{smsemig}, one can prove $f\mapsto\mathbf{Q}(t)f\in\mathcal{L}(\SSS'^*(\RR^d),\SSS^*(\RR^d))$, for each $t\geq 0$. Using the properties of $T(t)$ and $\tilde{\mathbf{K}}(t)$, one readily checks that the mapping $(t,s)\mapsto T(t-s)\tilde{\mathbf{K}}(s)$, $\{(t,s)\in\RR^2|\, 0\leq s\leq t\}\rightarrow \mathcal{L}_b(\SSS'^*(\RR^d),\SSS^*(\RR^d))$, is continuous. Hence, for each $C>0$, $\{T(t-s)\tilde{\mathbf{K}}(s)|\, 0\leq s\leq t\leq C\}$ is an equicontinuous subset of $\mathcal{L}(\SSS'^*(\RR^d),\SSS^*(\RR^d))$. Employing this fact together with (\ref{kfr97}) and the semigroup property of $T(t)$, one can prove that $t\mapsto\mathbf{Q}(t)$, $[0,\infty)\rightarrow \mathcal{L}_b(\SSS'^*(\RR^d),\SSS^*(\RR^d))$, is continuous. Now, reproducing the proof of \cite[Lemma 7.15]{PP1} verbatim, one gets the following result.

\begin{lemma}\label{smooth-ker-pe}
The mapping $t\mapsto\mathbf{Q}(t)$ belongs to $ C^{\infty}([0,\infty);\mathcal{L}_b(\SSS'^*(\RR^d),\SSS^*(\RR^d)))$.
\end{lemma}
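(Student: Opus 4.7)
The plan is to establish the Volterra-type identity
\begin{equation*}
\frac{d}{dt}\mathbf{Q}(t)=\tilde{\mathbf{K}}(t)-a^w\mathbf{Q}(t), \quad t\geq 0,
\end{equation*}
as an equality in $\mathcal{L}_b(\SSS'^*(\RR^d),\SSS^*(\RR^d))$, and then to bootstrap via this identity to full $C^\infty$ regularity. Note that the right-hand side lives in the correct space because left-composition with $a^w\in \mathcal{L}(\SSS^*(\RR^d),\SSS^*(\RR^d))$ defines a continuous endomorphism of $\mathcal{L}_b(\SSS'^*(\RR^d),\SSS^*(\RR^d))$.

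To verify the identity at a point $t_0\geq 0$, I would split the difference quotient for $t>t_0$ as
\begin{equation*}
\frac{\mathbf{Q}(t)-\mathbf{Q}(t_0)}{t-t_0}=\frac{1}{t-t_0}\int_{t_0}^{t}T(t-s)\tilde{\mathbf{K}}(s)\,ds+\int_{0}^{t_0}\frac{T(t-s)-T(t_0-s)}{t-t_0}\tilde{\mathbf{K}}(s)\,ds.
\end{equation*}
The first summand converges to $T(0)\tilde{\mathbf{K}}(t_0)=\tilde{\mathbf{K}}(t_0)$ thanks to the already-noted joint continuity of $(t,s)\mapsto T(t-s)\tilde{\mathbf{K}}(s)$ into $\mathcal{L}_b(\SSS'^*(\RR^d),\SSS^*(\RR^d))$. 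For the second summand, Theorem \ref{semi-group-pse} gives $(T(t-s)-T(t_0-s))/(t-t_0)\to -a^wT(t_0-s)$ in $\mathcal{L}_b(\SSS^*(\RR^d),\SSS^*(\RR^d))$, and I would pass this limit under the integral by the same equicontinuity/Banach--Steinhaus/Montel tripod used in the proof of Proposition \ref{smsemig}: on the compact parameter set $\{(t,s)\mid t_0\leq t\leq t_0+1,\ 0\leq s\leq t_0\}$, the families $\{T(t-s)\}$, $\{a^wT(t-s)\}$ in $\mathcal{L}(\SSS^*(\RR^d),\SSS^*(\RR^d))$ and $\{\tilde{\mathbf{K}}(s)\}$ in $\mathcal{L}(\SSS'^*(\RR^d),\SSS^*(\RR^d))$ are equicontinuous, so that the integrand can be tested against an arbitrary bounded set in $\SSS'^*(\RR^d)$ and an arbitrary zero-neighborhood in $\SSS^*(\RR^d)$ and dominated convergence applied. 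The left difference quotient at $t_0>0$ is treated symmetrically.

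Having $(d/dt)\mathbf{Q}(t)=\tilde{\mathbf{K}}(t)-a^w\mathbf{Q}(t)$ in hand, both summands are continuous in $t$ (the first by hypothesis, the second because $\mathbf{Q}$ is continuous and $S\mapsto a^w\circ S$ is continuous on $\mathcal{L}_b(\SSS'^*(\RR^d),\SSS^*(\RR^d))$), so $\mathbf{Q}\in C^1$. The identity then propagates by a straightforward induction: if $\mathbf{Q}\in C^k([0,\infty);\mathcal{L}_b(\SSS'^*(\RR^d),\SSS^*(\RR^d)))$, then the right-hand side of the identity is $C^k$, and hence $\mathbf{Q}\in C^{k+1}$. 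This yields $\mathbf{Q}\in C^\infty$ and, incidentally, the explicit formula
\begin{equation*}
\frac{d^k}{dt^k}\mathbf{Q}(t)=\sum_{j=0}^{k-1}(-a^w)^{j}\tilde{\mathbf{K}}^{(k-1-j)}(t)+(-a^w)^{k}\mathbf{Q}(t), \quad k\in\ZZ_+.
\end{equation*}

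The main technical obstacle is the limit-exchange with the integral in the $\mathcal{L}_b$-topology, since $\mathcal{L}_b(\SSS'^*(\RR^d),\SSS^*(\RR^d))$ is not a normed space; the argument hinges on testing against bounded subsets of $\SSS'^*(\RR^d)$ and zero-neighborhoods of $\SSS^*(\RR^d)$, on the barrelledness of $\SSS^*(\RR^d)$ (to convert bounded pointwise behaviour into equicontinuity), and on Montelness (to upgrade convergence from $\mathcal{L}_\sigma$ to $\mathcal{L}_b$) -- precisely the steps orchestrated in the proof of Proposition \ref{smsemig}, now applied to an $s$-parametrised composition rather than to a single operator-valued path.
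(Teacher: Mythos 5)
Your proposal is correct and proceeds along essentially the same lines as the paper, which at this point simply invokes the proof of \cite[Lemma 7.15]{PP1} verbatim: one differentiates the Duhamel integral defining $\mathbf{Q}(t)$ using exactly the ingredients the paper prepares beforehand (the weak-integral representation \eqref{kfr97}, the joint continuity and equicontinuity of $(t,s)\mapsto T(t-s)\tilde{\mathbf{K}}(s)$, Theorem \ref{semi-group-pse}, and the Banach--Steinhaus/Montel upgrade from $\mathcal{L}_{\sigma}$ to $\mathcal{L}_b$), arriving at $\frac{d}{dt}\mathbf{Q}(t)=\tilde{\mathbf{K}}(t)-a^w\mathbf{Q}(t)$ and bootstrapping to $C^{\infty}$. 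The only point worth spelling out is that the domination of the second summand uses that the difference quotients of $T$ are averages of the equicontinuous family $\{a^wT(\sigma)\}$ over compact parameter ranges (fundamental theorem of calculus for the $C^{1}$ map $t\mapsto T(t)$), which your listed equicontinuous families indeed provide.
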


Denoting the Weyl symbol of $\mathbf{Q}(t)$ by $Q(t,w)$, this lemma together with the property of symbols of operators in $\mathcal{L}(\SSS'^*(\RR^d),\SSS^*(\RR^d)))$ (cf. \cite[Propositions 2 and 3]{BojanP}) imply:

\begin{corollary}\label{ttk771133}
The mapping $t\mapsto Q(t,\cdot)$ belongs to $ C^{\infty}([0,\infty);\SSS^*(\RR^{2d}))$.
\end{corollary}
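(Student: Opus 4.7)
The plan is to deduce the corollary from Lemma \ref{smooth-ker-pe} by transferring smoothness from the operator side to the symbol side via the topological identification of $\SSS^*(\RR^{2d})$ with $\mathcal{L}(\SSS'^*(\RR^d),\SSS^*(\RR^d))$ furnished by the Weyl quantisation. The hinted ingredient is that, by \cite[Propositions 2 and 3]{BojanP}, every $\mathbf{T}\in\mathcal{L}(\SSS'^*(\RR^d),\SSS^*(\RR^d))$ has a unique Weyl symbol $\sigma(\mathbf{T})\in\SSS^*(\RR^{2d})$ (so that $\mathbf{T}=(\sigma(\mathbf{T}))^w$), and the assignment $\Op_{1/2}\colon\SSS^*(\RR^{2d})\to\mathcal{L}_b(\SSS'^*(\RR^d),\SSS^*(\RR^d))$, $a\mapsto a^w$, is a topological isomorphism onto this space. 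In particular, its inverse $\sigma$ is linear and continuous as a map $\mathcal{L}_b(\SSS'^*(\RR^d),\SSS^*(\RR^d))\to\SSS^*(\RR^{2d})$.

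Given this, the argument is a one-line transfer. Lemma \ref{smooth-ker-pe} provides that $t\mapsto\mathbf{Q}(t)$ belongs to $C^{\infty}([0,\infty);\mathcal{L}_b(\SSS'^*(\RR^d),\SSS^*(\RR^d)))$. Composing with the continuous linear map $\sigma$, the chain rule for vector-valued $C^{\infty}$ functions (for continuous linear operators between l.c.s., composition preserves smoothness and commutes with differentiation) yields that $t\mapsto Q(t,\cdot)=\sigma(\mathbf{Q}(t))$ belongs to $C^{\infty}([0,\infty);\SSS^*(\RR^{2d}))$, with
\[
\frac{d^k}{dt^k}Q(t,\cdot)=\sigma\!\left(\frac{d^k}{dt^k}\mathbf{Q}(t)\right),\quad k\in\NN,\; t\geq 0.
\]

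There is no real obstacle. The only point worth verifying is that the topologies match: the continuity of $\Op_{1/2}^{-1}$ must be with respect to the topology of bounded convergence on the operator side, which is precisely what is established in \cite[Propositions 2 and 3]{BojanP}. Everything else is a formal consequence of composing a smooth vector-valued map with a continuous linear one.
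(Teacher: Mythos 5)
Your proposal is correct and follows essentially the same route as the paper: the authors likewise deduce the corollary directly from Lemma \ref{smooth-ker-pe} together with the fact (from \cite[Propositions 2 and 3]{BojanP}) that the Weyl quantisation identifies $\SSS^*(\RR^{2d})$ topologically with $\mathcal{L}_b(\SSS'^*(\RR^d),\SSS^*(\RR^d))$, so that smoothness of $t\mapsto\mathbf{Q}(t)$ transfers to $t\mapsto Q(t,\cdot)$ by composing with the continuous linear symbol map. No gap to report.
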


Notice that (\ref{est_heat_par}) together with $a(w)/\ln|w|\rightarrow +\infty$, as $|w|\rightarrow\infty$, ensures that $(\mathbf{u}(t))^w$ is trace-class for each $t>0$ (cf. \cite[Theorem 4.4.21, p. 190]{NR}). Now, Lemma \ref{smooth-ker-pe} ensures that $T(t)$ is also trace-class for $t>0$. As $T(t)$ are self-adjoint, we conclude $\mathrm{Tr}\, T(t)=\sum_{j=0}^{\infty}e^{-t\lambda_j}$. Thus,
\beqs
\sum_{j=0}^{\infty} e^{-t\lambda_j}=\frac{1}{(2\pi)^d}\int_{\RR^{2d}}u(t,w)dw-\frac{1}{(2\pi)^d}\int_{\RR^{2d}}Q(t,w)dw,\,\,\, t>0.
\eeqs
The second integral is $O(1)$ as $t\rightarrow 0^+$ (because of Corollary \ref{ttk771133}). Fix $n>d/\rho$, $n\in\ZZ_+$. Since $u_0(t,w)=e^{-tb(w)}$ and $b(w)=a(w)$ for $w$ outside of a compact neighbourhood of the origin, we have
\beqs
\sum_{j=0}^{\infty} e^{-t\lambda_j}&=&\frac{1}{(2\pi)^d}\int_{\RR^{2d}}e^{-ta(w)}dw+\frac{1}{(2\pi)^d}\sum_{j=1}^{n-1}\int_{\RR^{2d}}u_j(t,w)dw\\
&{}&+ \frac{1}{(2\pi)^d}\int_{\RR^{2d}}\left(u(t,w)-\sum_{j=0}^{n-1}u_j(t,w)\right)dw+O(1),\,\, t\rightarrow 0^+.
\eeqs
In view of the second estimate in Lemma \ref{rks75} (specialised for $n=0$ and $\alpha=0$), the very last integral is $O(1)$ as $t\rightarrow 0^+$. Lemma \ref{est_heat_ker} implies that there exists $C'>0$ such that $|u_j(t,w)|\leq Ce^{-\frac{t}{4}b(w)}\langle w\rangle^{-2\rho}$, for all $w\in\RR^{2d}$, $t\geq0$, $j=1,\ldots, n-1$. Using again $b=a$ except in a compact neighbourhood of $0$, we have
\beqs
\sum_{j=0}^{\infty} e^{-t\lambda_j}=\frac{1}{(2\pi)^d}\int_{\RR^{2d}}e^{-ta(w)}dw+ O\left(\int_{\RR^{2d}}\frac{e^{-\frac{t}{4}a(w)}}{\langle w\rangle^{2\rho}}dw\right)+O(1),\,\,\, t\to0^{+}.
\eeqs
We claim
\beq\label{heatparametrixasymptoticbound11}
\lim_{t\rightarrow 0^+}  \int_{\RR^{2d}}\frac{e^{-ta(w)/4}}{\langle w\rangle^{2\rho}}dw=\infty.
\eeq
To verify it, first notice that $a\in\Gamma^{*,\infty}_{A_p,\rho}(\RR^{2d})$ implies that there are $m,C>0$ (resp. for every $m>0$ there exists $C>0$) such that $a(w)\leq C e^{M(m|w|)}$, $\forall w\in\RR^{2d}$. Using this estimate (in the Roumieu case we can take $m=1$ with the corresponding $C>0$) and polar coordinates, we have
\beqs
\int_{\RR^{2d}}\frac{e^{-ta(w)/4}}{\langle w\rangle^{2\rho}}dw&\geq& \int_{\mathbb{S}^{2d-1}}\int_0^{\infty}\exp\left(-\frac{tCe^{M(m r)}}{4}\right)\frac{r^{2d-1}}{(1+r^2)^{\rho}}dr d\vartheta\\
&\geq& \frac{2\pi^d}{(d-1)!}\int_1^{\infty}\exp\left(-\frac{tCe^{M(m r)}}{4}\right)\frac{r}{1+r^2}dr.
\eeqs
Monotone convergence implies that the very last integral tends to $\infty$ as $t\rightarrow0^+$. We have shown:

\begin{theorem}\label{assymptoticsheatpara}
Let $a$ be a hypoelliptic real-valued symbol in $\Gamma^{*,\infty}_{A_p,\rho}(\RR^{2d})$ such that
$$
\lim_{|w|\to\infty} \frac{a(w)}{\ln |w|}=\infty.
$$
Then
\beq
\label{heatparametrixasymptoticbound}
\sum_{j=0}^{\infty} e^{-t\lambda_j}=\frac{1}{(2\pi)^d}\int_{\RR^{2d}}e^{-ta(w)}dw+ O\left(\int_{\RR^{2d}}\frac{e^{-\frac{t}{4}a(w)}}{\langle w\rangle^{2\rho}}dw\right),\,\,\, t\rightarrow 0^+.
\eeq
\end{theorem}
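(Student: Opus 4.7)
The plan is to compute the trace of $T(t)$ in two different ways and match the results. First I would establish that $T(t)$ is trace-class for $t>0$: since $(\mathbf{u}(t))^w$ is trace-class (this follows, as in \cite[Theorem 4.4.21, p. 190]{NR}, from the estimate in Lemma \ref{est_heat_par} combined with the crucial hypothesis $a(w)/\ln|w|\to\infty$, which makes $e^{-tb(w)/4}$ integrable against any power weight for $t>0$), and since the difference $T(t)-(\mathbf{u}(t))^w=-\mathbf{Q}(t)$ has Weyl symbol $Q(t,\cdot)\in\SSS^*(\RR^{2d})$ by Corollary \ref{ttk771133}, $T(t)$ itself is trace-class. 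Its spectral trace is $\mathrm{Tr}\,T(t)=\sum_{j=0}^\infty e^{-t\lambda_j}$ since $\{\varphi_j\}_{j\in\NN}$ diagonalises $T(t)$.

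Second, computing the trace through the Weyl symbol gives
\[
\sum_{j=0}^\infty e^{-t\lambda_j}=\frac{1}{(2\pi)^d}\int_{\RR^{2d}}u(t,w)dw-\frac{1}{(2\pi)^d}\int_{\RR^{2d}}Q(t,w)dw.
\]
The $Q$-integral is $O(1)$ as $t\to 0^+$ because $t\mapsto Q(t,\cdot)$ is continuous into $\SSS^*(\RR^{2d})$ by Corollary \ref{ttk771133}. For the $u$-integral I would fix $n>d/\rho$ and decompose $u=u_0+\sum_{j=1}^{n-1}u_j+r_n$ with $r_n=u-\sum_{j=0}^{n-1}u_j$. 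The principal term contributes $\int e^{-tb(w)}dw=\int e^{-ta(w)}dw+O(1)$, since $b$ agrees with $a$ outside a compact set. Lemma \ref{est_heat_ker} (for $n=0$, $\alpha=0$) bounds each $u_j$, $j=1,\dots,n-1$, by $C\langle w\rangle^{-2\rho}e^{-tb(w)/4}$, and the second estimate of Lemma \ref{rks75} bounds $r_n$ by $C\langle w\rangle^{-2n\rho}e^{-tb(w)/4}$; the latter is already $O(1)$ since $2n\rho>2d$. Replacing $b$ by $a$ again changes only an $O(1)$ contribution. Collecting all these estimates yields
\[
\sum_{j=0}^\infty e^{-t\lambda_j}=\frac{1}{(2\pi)^d}\int_{\RR^{2d}}e^{-ta(w)}dw+O\!\left(\int_{\RR^{2d}}\frac{e^{-ta(w)/4}}{\langle w\rangle^{2\rho}}dw\right)+O(1),\quad t\to 0^+.
\]

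The final step, which is the main obstacle, is to absorb this residual $O(1)$ into the stated error term by showing that
\[
\int_{\RR^{2d}}\frac{e^{-ta(w)/4}}{\langle w\rangle^{2\rho}}dw\longrightarrow\infty\quad\text{as }t\to 0^+.
\]
Here I would use the upper bound $a(w)\leq Ce^{M(m|w|)}$ valid for some $m,C>0$ (in the Roumieu case for every $m>0$, pick any), which is available because $a\in\Gamma_{A_p,\rho}^{*,\infty}(\RR^{2d})$. Passing to polar coordinates gives
\[
\int_{\RR^{2d}}\frac{e^{-ta(w)/4}}{\langle w\rangle^{2\rho}}dw\geq c\int_{1}^\infty \frac{r}{1+r^2}\exp\!\left(-\tfrac{tC}{4}e^{M(mr)}\right)dr,
\]
and the integrand increases monotonically to $r/(1+r^2)$ as $t\to 0^+$, so the integral diverges by monotone convergence. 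This justifies dropping the additive $O(1)$ and yields \eqref{heatparametrixasymptoticbound}.
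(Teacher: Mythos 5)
Your proposal is correct and follows essentially the same route as the paper: trace-class of $T(t)$ via the heat parametrix plus the smooth $*$-regularising family $\mathbf{Q}(t)$, computation of the trace through the Weyl symbol, the decomposition $u=u_0+\sum_{j=1}^{n-1}u_j+r_n$ with $n>d/\rho$ bounded by Lemmas \ref{est_heat_ker} and \ref{rks75}, and absorption of the residual $O(1)$ by showing the error integral diverges using $a(w)\leq Ce^{M(m|w|)}$, polar coordinates, and monotone convergence. No gaps worth noting.
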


The next remark shows that (\ref{heatparametrixasymptoticbound}) remains valid for hypoelliptic symbols of finite order.

\begin{remark}\label{forthefiniteord}
Let $a\in \Gamma^m_{\rho}(\RR^{2d})$ be a hypoelliptic real-valued symbol such that $a(w)\geq c\langle w\rangle^{\delta}$ for some $\delta>0$, $\forall|w|\geq c$, and consider its heat parametrix $(\mathbf{u}(t))^w=(u(t,\cdot))^w$ as constructed in Remark \ref{heat-para-fin} and the $C_0$-semigroup $\{T(t)\}_{t\geq 0}$ as given by (\ref{semigroup11}). The fact $t\mapsto T(t)\in  C^{\infty}([0,\infty);\mathcal{L}_b(\SSS(\RR^d),\SSS(\RR^d)))$ can be proved far more easily in the distributional setting. To verify this, first notice that $(a^w)^j$ is hypoelliptic for each $j\in\ZZ_+$ and denote its symbol by $a_j\in\Gamma^{jm}_{\rho}(\RR^{2d})$. Clearly $|a_j(w)|\geq \langle w\rangle^{\delta j}$ away the origin. For each $\varphi\in\SSS(\RR^d)$, $t\geq 0$ and $j\in\ZZ_+$, we have $(a^w)^jT(t)\varphi=T(t)(a^w)^j\varphi\in L^2(\RR^d)$. Because of \cite[Theorem 2.1.16, p. 76]{NR}, $T(t)\varphi$ belongs to all Sobolev spaces $H^k_{\Gamma}(\RR^d)$, $k\in\ZZ_+$, and thus $T(t)\varphi\in \SSS(\RR^d)$. Now, the closed graph theorem yields $T(t)\in\mathcal{L}(\SSS(\RR^d),\SSS(\RR^d))$, $t\geq 0$. Since $\ds \SSS(\RR^d)=\lim_{\substack{\longleftarrow\\ k\rightarrow\infty}}H^k_{\Gamma}(\RR^d)$, in order to prove that $t\mapsto T(t)$ is right continuous at $t_0$ it is enough to prove that for each $k\in\ZZ_+$, $\varepsilon>0$ and bounded subset $B$ of $\SSS(\RR^d)$, there exists $\eta>0$ such that $\|T(t)\varphi-T(t_0)\varphi\|_{H^k_{\Gamma}}\leq \varepsilon$, $\forall t\in(t_0,t_0+\eta)$, $\forall\varphi\in B$. The a priori estimate in \cite[Theorem 2.1.16, p. 76]{NR} yields that there exist $C>0$ and $j\in\ZZ_+$ such that\\
\\
$\|T(t)\varphi-T(t_0)\varphi\|_{H^k_{\Gamma}}$
\beqs
\leq C\|T(t_0)\|_{\mathcal{L}_b(L^2(\RR^d))} (\|(T(t-t_0)-\mathrm{Id})(a^w)^j\varphi\|_{L^2}+ \|(T(t-t_0)-\mathrm{Id})\varphi\|_{L^2}).
\eeqs
Since $T(t)\rightarrow \mathrm{Id}$ in $\mathcal{L}_p(L^2(\RR^d),L^2(\RR^d))$ (by the Banach-Steinhaus theorem; $\{T(t)\}_{t\geq 0}$ is a $C_0$-semigroup) and $B$ and $(a^w)^j(B)$ are precompact in $\SSS(\RR^d)$ and hence also in $L^2(\RR^d)$, we obtain that $t\mapsto T(t)$ is right continuous at $t_0$. Similarly, one proves that it is left continuous. The same a priori estimate proves that the set $H=\{(t-t_0)^{-1}(T(t)-T(t_0))|\, t\in([t_0-1,t_0+1]\backslash\{t_0\})\cap [0,\infty)\}$ is bounded in $\mathcal{L}_{\sigma}(\SSS(\RR^d),\SSS(\RR^d))$, hence equicontinuous. Again, the same a priori estimate proves $(t-t_0)^{-1}(T(t)-T(t_0))\rightarrow -a^wT(t_0)$ in $\mathcal{L}_{\sigma}(\SSS(\RR^d),\SSS(\RR^d))$ and, as $H$ is equicontinuous, the Banach-Steinhaus theorem \cite[Theorem 4.5, p. 85]{Sch} gives the limit in the topology of precompact convergence. As $\SSS(\RR^d)$ is Montel, the limit holds in the strong topology. This immediately yields $t\mapsto T(t)\in C^{\infty}([0,\infty);\mathcal{L}_b(\SSS(\RR^d),\SSS(\RR^d)))$. Now one can obtain in the same way as above the validity of Lemma \ref{smooth-ker-pe} and Corollary \ref{ttk771133} in this case as well (of course, with $\SSS(\RR^d)$ and $\SSS'(\RR^d)$ in place of $\SSS^*(\RR^d)$ and $\SSS'^*(\RR^d)$).\\
\indent Using the estimates for $u(t,w)$ and $u_j(t,w)$ given in Remark \ref{heat-para-fin}, one readily obtains (\ref{heatparametrixasymptoticbound11}) and the asymptotic estimate (\ref{heatparametrixasymptoticbound}) from Theorem \ref{assymptoticsheatpara} in the finite order case too.
\end{remark}

\section{The Weyl asymptotic formula for infinite order $\Psi$DOs. Part II: proofs of the main results}\label{proofWeylasymp}

We now present the proofs of Theorems \ref{Weylth1}, \ref{Weylth2}, \ref{Weylth3}, and Corollary \ref{Weylformulac1}. In the sequel, we also use Vinogradov's notation  for $O$-estimates, namely, $g_{1}(t)\ll g_{2}(t)$ as an alternative way of writing $g_{1}(t)=O(g_{2}(t))$.\\
\indent We first make some comments that apply to all cases simultaneously. A preliminary observation is that $f(y)/y^{\delta} \to\infty$ as $y\to\infty$ for any $0<\delta<\ds\liminf_{y\to\infty}yf'(y)/f(y)$ as follows by integrating (\ref{weyleq8}) which holds in the three cases. It then follows from (\ref{weyleq3}), (\ref{weyleq6}), or (\ref{weyleq9}) that  $a(w)/| w |^{\delta} \to\infty$ as $w\to\infty$.
Incidentally, this also implies that $f'(y)>0$ a.e. on $[Y_1,\infty)$, for some large enough $Y_1\geq Y$ and additionally $f(y)>1$ on $[Y_1,\infty)$. Without loss of generality, we may assume $Y_1=Y>1$. We conclude that $\sigma$ is absolutely continuous on every compact interval contained in $[f(Y),\infty)$. We extend $\sigma$ to $[0,f(Y)]$ as a positive non-decreasing absolutely continuous function with $\sigma(\lambda)=1$ near $\lambda=0$. Note also that $\sigma(\lambda)\to\infty$ as $\lambda\to\infty$. We now derive some  regular variation properties of $\sigma$.

For Theorems \ref{Weylth1} and \ref{Weylth2}, and Corollary \ref{Weylformulac1}, we combine (\ref{weyleq2}) and (\ref{weyleq5}) into
\beq
\label{weyleq14}
\lim_{y\to\infty} \frac{yf'(y)}{f(y)}=\beta\in (0,\infty].
\eeq
Let us verify that (\ref{weyleq14}) implies that $\sigma$ is a Karamata regular varying function \cite{BGT} with index of regular variation $2d/\beta$ ($=0$ if $\beta=\infty$), that is, that
\begin{equation}\label{weyleq15}
\lim_{\lambda\to\infty}\frac{\sigma(\alpha\lambda)}{\sigma(\lambda)}=\alpha^{\frac{2d}{\beta}},
\end{equation}
uniformly for $\alpha$ in compact subsets of $(0,\infty)$. In fact, we have that
$$\eta(\lambda)=\frac{\lambda \sigma'(\lambda)}{\sigma(\lambda)}=2d\frac{f(f^{-1}(\lambda))}{f^{-1}(\lambda)f'(f^{-1}(\lambda))}\to \frac{2d}{\beta}, \quad \lambda\to\infty,$$
and
$$\sigma(\lambda)=\exp{\left(\int_{0}^{\lambda}\frac{\eta(t)}{t}dt\right)}$$
 for all $\lambda$ (note that $\eta(t)$ vanishes for $t$ near 0). This easily yields (\ref{weyleq15}).

Similarly, the hypothesis (\ref{weyleq8}) and the fact that $\sigma$ is increasing imply that there are $\nu,C_{1}>0$ such that
\begin{equation}\label{weyleq15.1}
\sigma(\alpha\lambda)/\sigma(\lambda)\leq C_1 (\alpha+1)^{\nu}, \quad \forall {\alpha,\lambda>0}.
\end{equation}
In fact, we may take any $\nu>0$ such that $2d/\nu<\beta'=\liminf_{y\to\infty}yf'(y)/f(y)$. For $\nu$ in this range, the inequality can be refined for large $\lambda$. Indeed, there is $\lambda_{0}=\lambda_{0}(\nu)$ such that
\begin{equation}\label{weyleq15.2}
\sigma(\alpha\lambda)/\sigma(\lambda)\leq \alpha^{\nu}, \quad \forall \alpha\geq1,\: \lambda\geq \lambda_0.
\end{equation}

The next starting point is the formula \eqref{heatparametrixasymptoticbound} from Theorem \ref{assymptoticsheatpara}, which holds under all our three sets of hypotheses (see Remark \ref{forthefiniteord} for the finite order case). As there are only finitely many possibly negative eigenvalues, we obtain (cf. (\ref{heatparametrixasymptoticbound11}))
\begin{equation}
\label{Weyleq16}
\int_{0}^{\infty}e^{-t\lambda }dN(\lambda)=\frac{1}{(2\pi)^d}\int_{\RR^{2d}}e^{-ta(w)}dw+ O\left(\int_{\RR^{2d}}\frac{e^{-ta(w)/4}}{\langle w\rangle^{2\rho}}dw\right), \quad  t\to0^{+}.
\end{equation}

\begin{proof}[Proof of Theorem \ref{Weylth1}] Let $\varepsilon>0$ be arbitrary but fixed and set
\beq
\label{eqproofc1Weylextra}
C'= \frac{1}{2d} \int_{\mathbb{S}^{2d-1}}\frac{d\vartheta}{(\Phi(\vartheta))^{2d}}\: .
\eeq
Using polar coordinates and the lower bound from \eqref{weyleq3}, we have that
 \begin{align*}\int_{\RR^{2d}}e^{-ta(w)}dw&
 \leq \int_{\mathbb{S}^{2d-1}}\int_{B_{\varepsilon}}^{\infty}r^{2d-1}e^{-c_{\varepsilon}t f((1-\varepsilon)r \Phi(\vartheta))}drd\vartheta
 + \int_{|w|\leq B_{\varepsilon}}e^{-ta(w)}dw
 \\
 &
 =\int_{\mathbb{S}^{2d-1}}\int_{B_{\varepsilon}}^{\infty}r^{2d-1}e^{-c_{\varepsilon}t f((1-\varepsilon)r \Phi(\vartheta))}drd\vartheta+ O_{\varepsilon}(1)
  \\
 &
 = (1-\varepsilon)^{-2d} C'\int_{f(Y)}^{\infty}e^{-c_\varepsilon t\lambda}\sigma'(\lambda)d\lambda+ O_{\varepsilon}(1)
 \\
 &
 =
(1-\varepsilon)^{-2d} C' \int_{0}^{\infty}e^{-\lambda }\sigma(\lambda/(c_\varepsilon t))d\lambda+ O_{\varepsilon}(1),
 \end{align*}
where we have used the change of variables $\lambda=f((1-\varepsilon)r \Phi(\vartheta))$ which gives
$$ r^{2d-1}\: dr= \frac{1}{2d} \left(\frac{1}{(1-\varepsilon)\Phi(\vartheta)}\right)^{2d}\sigma'(\lambda)d\lambda.$$
 Since $\sigma$ is slowly varying (i.e. $\sigma(\alpha\lambda)/\sigma(\lambda)\to1$ as $\lambda\to\infty$),
$$\int_{0}^{\infty}e^{-\lambda }\sigma(\lambda/(c_{\varepsilon}t))d\lambda\sim \sigma\left(1/t\right), \quad t\to0^{+},$$
as follows from the Lebesgue dominated convergence theorem (the bound \eqref{weyleq15.1} holds here for every $\nu>0$ and $C_1$ depending only on $\nu$).
Thus,
$$
\limsup_{t\to0^{+}} \frac{1}{\sigma(1/t)}\int_{\RR^{2d}}e^{-ta(w)}dw\leq (1-\varepsilon)^{-2d} C',
$$
because $\sigma(1/t)\to\infty$. But we can now take $\varepsilon\to 0^{+}$ to conclude
$$
\limsup_{t\to0^{+}} \frac{1}{\sigma(1/t)}\int_{\RR^{2d}}e^{-ta(w)}dw\leq C'.
$$
Similarly,
$$\liminf_{t\to0^{+}} \frac{1}{\sigma(1/t)}\int_{\RR^{2d}}e^{-ta(w)}dw\geq C';$$
therefore,
$$
\frac{1}{(2\pi)^d}\int_{\RR^{2d}}e^{-ta(w)}dw\sim \frac{C'}{(2\pi)^{d}} \sigma(1/t), \quad t\to0^{+}.
$$
On the other hand, a small computation along the same lines as the above one shows that
\begin{equation}
\label{weyleq17}
\int_{\RR^{2d}}\frac{e^{-ta(w)/4}}{\langle w\rangle^{2\rho}}dw\ll \sigma(1/t)^{1-\frac{\rho}{d}}=o(\sigma(1/t)).
\end{equation}
Inserting all this into (\ref{Weyleq16}), we conclude
$$\int_{0}^{\infty}e^{-t\lambda}dN(\lambda)\sim  \frac{C'}{(2\pi)^{d}}  \sigma\left(\frac{1}{t}\right), \quad t\to0^{+},$$
and (\ref{weyleq4}) follows from the well known Karamata Tauberian theorem \cite[ Theorem 1.7.1, p. 37]{BGT} (see also \cite[ Theorem 8.1, p. 193]{Korevaarbook}).\\
\indent Using (\ref{weyleq4}) and employing a classical argument (see e.g. \cite[Proposition 4.6.4, p. 198]{NR}, the same proof works fine in our case), we obtain that
\beq\label{ktfhhh113}
\sigma(\lambda_j)\sim \frac{(2\pi)^{d}}{C'} j, \quad j\to\infty.
\eeq
Notice that (\ref{weyleq4.1}) is equivalent to (\ref{ktfhhh113}). Finally, \eqref{weyleq4.2} follows from \eqref{weyleq4.1} and
$$
\frac{f(\alpha y)}{f(\alpha' y)}=\exp\left(\int_{\alpha' y}^{\alpha y}\frac{f'(t)}{f(t)}dt\right)\to \infty, \quad y\to\infty,
$$
valid for every $\alpha'<\alpha$ because of (\ref{weyleq2}). This completes the proof of Theorem \ref{Weylth1}.
\end{proof}

\begin{proof}[Proof of Theorem \ref{Weylth2}]
\indent Pick $\varepsilon>0$ and find $B$ so large that
$$(1-\varepsilon)f(r)\Phi(\vartheta)\leq a(r\vartheta)\leq (1+\varepsilon)f(r)\Phi(\vartheta)$$
for all $\vartheta\in \mathbb{S}^{2d-1}$ and $r>B$. Note that $\Phi$ is continuous and thus $\Phi(\vartheta)$ stays on a compact subset of $(0,\infty)$. Using that \eqref{weyleq15} is valid uniformly for $\alpha$ on compact subsets of $(0,\infty)$, we then obtain,
 \begin{align*}\frac{1}{\sigma(1/t)}\int_{\RR^{2d}}e^{-ta(w)}dw&\leq \frac{1}{\sigma(1/t)}\int_{\mathbb{S}^{2d-1}}\int_{0}^{\infty}e^{-(1-\varepsilon)t\Phi(\vartheta) f(r)}r^{2d-1}drd\vartheta+ o_{\varepsilon}(1)
 \\&
 = \frac{1}{2d}\int_{0}^{\infty}e^{-\lambda }\left(\int_{\mathbb{S}^{2d-1}}
 \frac{\sigma(\lambda/((1-\varepsilon)\Phi(\vartheta)t))}{\sigma(1/t)}d\vartheta
 \right)d\lambda+ o_{\varepsilon}(1)
  \\
 &=
 \frac{\int_{0}^{\infty} e^{-\lambda} \lambda^{2d/\beta}d\lambda}{2d(1-\varepsilon)^{2d/\beta}}\left(\int_{\mathbb{S}^{2d-1}}\frac{d\vartheta}{(\Phi(\vartheta))^{2d/\beta}}\right)+o_{\varepsilon}(1)
 \\
 &=
 \frac{\Gamma\left(1+\frac{2d}{\beta}\right)}{2d(1-\varepsilon)^{2d/\beta}}\int_{\mathbb{S}^{2d-1}}\frac{d\vartheta}{(\Phi(\vartheta))^{2d/\beta}}+o_{\varepsilon}(1), \quad t\to0^{+}.
 \end{align*}
Taking first $t\to0^{+}$ and then $\varepsilon\to 0^{+}$, we conclude that
$$
\limsup_{t\to0^{+}} \frac{1}{\sigma(1/t)}\int_{\RR^{2d}}e^{-ta(w)}dw\leq  \frac{\Gamma\left(1+\frac{2d}{\beta}\right)}{2d}\int_{\mathbb{S}^{2d-1}}\frac{d\vartheta}{(\Phi(\vartheta))^{2d/\beta}}.
$$
The estimate (\ref{weyleq17}) remains valid in this case too. A similar analysis for the limit inferior, together with (\ref{Weyleq16}) and (\ref{weyleq17}), leads to
$$
\int_{0}^{\infty}e^{-t\lambda }dN(\lambda)\sim \sigma(1/t)\: \frac{\pi \Gamma\left(1+\frac{2d}{\beta}\right)}{(2\pi)^{d+1}d} \int_{\mathbb{S}^{2d-1}}\frac{d\vartheta}{(\Phi(\vartheta))^{2d/\beta}}, \quad t\to0^{+}.
$$
We can apply once again the Karamata Tauberian theorem \cite{BGT,Korevaarbook} to conclude that (\ref{weyleq7}) holds.\\
\indent The classical argument quoted above in the proof of Theorem \ref{Weylth1} easily gives
$
\sigma(\lambda_{j})\sim j/C,$ $ j\to\infty,$
with $C= d^{-1}(2\pi)^{-d-1}\pi\int_{\mathbb{S}^{2d-1}}(\Phi(\vartheta))^{-2d/\beta} d\vartheta$. This immediately implies $(j/C)^{\frac{1}{2d}}\sim f^{-1}(\lambda_j)$, as $j\to\infty$. Note that \eqref{weyleq5} yields that $f$ is regularly varying of index $\beta$, i.e.,
$
f(\alpha\lambda)\sim \alpha^{\beta}f(\lambda),$ $\lambda\to\infty,$
uniformly for $\alpha>0$ on compacts of $(0,\infty)$. Using this,
$
\lambda_{j}=f((j/C)^{\frac{1}{2d}}(1+o(1)))\sim C^{-\frac{\beta}{2d}}f(j^{\frac{1}{2d}}),
$
which is (\ref{weyleq7.1}).
\end{proof}

\begin{proof}[Proof of Corollary \ref{Weylformulac1}] We only give the proof under the assumptions of Theorem \ref{Weylth1}, the proof of this corollary with the hypotheses from Theorem \ref{Weylth2} is similar and the details are therefore left to the reader. By Theorem \ref{Weylth1}, we only need to show that
$$
\int_{a(w)<\lambda}dw \sim C' \sigma(\lambda), \quad \lambda\to\infty,
$$
where $C'$ is given by (\ref{eqproofc1Weylextra}). We show that
$$
\limsup_{\lambda\to\infty}\frac{1}{\sigma(\lambda)}\int_{a(w)<\lambda}dw \leq C';
$$
one treats analogously the limit inferior to obtain the desired result and we thus omit the calculation. Fixing $\varepsilon>0$, using the lower bound from (\ref{weyleq3}) (choose $B_{\varepsilon}>Y$), polar coordinates, and (\ref{weyleq15}), we have
\begin{align*}
\limsup_{\lambda\to\infty}\frac{1}{\sigma(\lambda)}\int_{a(w)<\lambda}dw&\leq \limsup_{\lambda\to\infty} \frac{1}{\sigma(\lambda)} \int_{\mathbb{S}^{2d-1}}\int_{\{r|\: B_{\varepsilon}<r,\ a(r\vartheta)<\lambda\}} r^{2d-1}dr d\vartheta
\\
&
\leq \lim_{\lambda\to\infty} \frac{1}{\sigma(\lambda)} \int_{\mathbb{S}^{2d-1}}\int_{ B_{\varepsilon}}^{(1+\varepsilon)f^{-1}(\lambda/c_{\varepsilon})/\Phi(\vartheta)} r^{2d-1}dr d\vartheta
\\
&
= (1+\varepsilon)^{2d} C' \lim_{\lambda\to\infty}\frac{\sigma(\lambda/c_{\varepsilon})}{\sigma(\lambda)}=(1+\varepsilon)^{2d}C'.
\end{align*}
The result now follows by taking $\varepsilon\to0^{+}$.
\end{proof}

\begin{proof}[Proof of Theorem \ref{Weylth3}] The lower bound (\ref{weyleq9}) still applies to show (\ref{weyleq17}). Combining this with the asymptotic estimate (\ref{Weyleq16}), we obtain
\begin{equation}
\label{weyleqlast}
\int_{0}^{\infty}e^{-t\lambda }dN(\lambda)=\frac{1}{(2\pi)^{d}}\int_{\RR^{2d}}e^{-ta(w)}dw +o(\sigma(1/t)), \quad t\to0^{+}.
\end{equation}
When either (\ref{weyleq8}) or (\ref{weyleq811}) holds, fix $\nu>2d/\beta'$ and find $\lambda_0>0$ such that (\ref{weyleq15.2}) holds. For $0<t\leq 1/\lambda_{0}$, we deduce from (\ref{weyleq9}) that
\beq
\int_{\RR^{2d}}e^{-ta(w)}dw  &\leq& \int_{\mathbb{S}^{2d-1}} \int_{B\leq r} e^{-Ct f(r)}r^{2d-1}dr d\vartheta +O_{\nu}(1)\nonumber \\
&=&
\frac{\pi^{d}}{d!} \int_{0}^{\infty} e^{-\lambda} \sigma(\lambda/(Ct)) d\lambda +O_{\nu}(1).\label{kttrri119}
\eeq
If (\ref{weyleq8}) holds, then the monotonicity of $\sigma$ together with (\ref{weyleq15.2}) yields
\beqs
\int_{\RR^{2d}}e^{-ta(w)}dw&\leq& \frac{\pi^{d}}{d!}\int_{0}^{C} e^{-\lambda} \sigma(\lambda/(Ct)) d\lambda +\frac{\pi^{d}}{d!}\int_{C}^{\infty} e^{-\lambda} \sigma(\lambda/(Ct)) d\lambda +O_{\nu}(1)\\
&\leq& \frac{\pi^{d}}{d!}\sigma(1/t)+\frac{\pi^{d}}{d!}\sigma(1/t) \int_{0}^{\infty} e^{-\lambda} \left(\frac{\lambda}{C}\right)^{\nu} d\lambda +O_{\nu}(1)\\
&\leq& \frac{\pi^{d}}{d!}\sigma(1/t)\left(1+\frac{\Gamma(1+\nu)}{C^{\nu}}\right) +O_{\nu}(1).
\eeqs
Using (\ref{weyleqlast}) and keeping still $t\leq 1/\lambda_{0}$,
\begin{align*}
N(1/t)-N(0)&= \int_{0}^{1/t}dN(\lambda)\leq  e \int_{0}^{1/t} e^{-t\lambda }dN(\lambda)\leq e \int_{0}^{\infty} e^{-t\lambda }dN(\lambda)
\\
&
\leq\frac{e}{2^d d!}\left(1+\frac{\Gamma(1+\nu)}{C^{\nu}}\right)\sigma(1/t) (1+o_\nu(1)).
\end{align*}
Dividing through by $\sigma(1/t)$, taking the limit superior as $t\to0^{+}$, and letting then $\nu\to 2d/\beta'$, we obtain the estimate (\ref{weyleq10}). The lower bound \eqref{weyleq10.1} easily follows by inserting $\lambda=\lambda_{j}$ in (\ref{weyleq10}) and the fact $N(\lambda_j)\geq j$, $\forall j\in\NN$. If (\ref{weyleq811}) holds, we divide (\ref{kttrri119}) by $\sigma(1/t)$ and take the limit superior as $t\rightarrow 0^+$. Because of (\ref{weyleq15}) we have
\beqs
\limsup_{t\rightarrow 0^+}\frac{1}{\sigma(1/t)}\int_{\RR^{2d}}e^{-ta(w)}dw\leq \frac{\pi^d\Gamma(1+2d/\beta')}{d!C^{2d/\beta'}}.
\eeqs
Now, the same technique as before yields the rest of the assertions of the theorem.
\end{proof}

\section{Appendix}\label{app}

We collect here some important facts concerning symbolic calculus and the construction of parametrices for operators with symbols in $\Gamma_{A_p,\rho}^{*,\infty}(\RR^{2d})$. We start with the following continuity result.

\begin{proposition}\label{continuity}$($\cite[Proposition 3.1]{PP1}$)$
For each $\tau\in\RR$, the bilinear mapping $(a,\varphi)\mapsto
\Op_{\tau}(a)\varphi$,
$\Gamma_{A_p,\rho}^{*,\infty}(\RR^{2d})\times
\SSS^*(\RR^d)\rightarrow \SSS^*(\RR^d)$, is hypocontinuous and it
extends to the hypocontinuous bilinear mapping $(a,T)\mapsto
\Op_{\tau}(a)T$, $\Gamma_{A_p,\rho}^{*,\infty}(\RR^{2d})\times
\SSS'^*(\RR^d)\rightarrow \SSS'^*(\RR^d)$. The
mappings $a\mapsto \Op_{\tau}(a)$,
$\Gamma_{A_p,\rho}^{*,\infty}(\RR^{2d})\rightarrow
\mathcal{L}_b(\SSS^*(\RR^d), \SSS^*(\RR^d))$, $\Gamma_{A_p,\rho}^{*,\infty}(\RR^{2d})\rightarrow
\mathcal{L}_b(\SSS'^*(\RR^d), \SSS'^*(\RR^d))$ are continuous.
\end{proposition}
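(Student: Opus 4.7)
The plan is to reduce everything to a single bilinear seminorm estimate of the form $p(\Op_\tau(a)\varphi)\leq C\, q(a)\, r(\varphi)$, where $p$ runs through the defining seminorms of $\SSS^*(\RR^d)$ and $q$, $r$ are suitable seminorms of $\Gamma^{*,\infty}_{A_p,\rho}(\RR^{2d})$ and $\SSS^*(\RR^d)$ respectively. Once such a uniform estimate is in place, the first hypocontinuity statement follows at once, the extension to $\SSS'^*(\RR^d)$ is obtained by transposition, and the continuity of $a\mapsto\Op_\tau(a)$ into $\mathcal{L}_b(\SSS^*(\RR^d),\SSS^*(\RR^d))$ and $\mathcal{L}_b(\SSS'^*(\RR^d),\SSS'^*(\RR^d))$ will be immediate from the definition of the topology of bounded convergence.

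To obtain the bilinear estimate I would first rewrite, after the substitution $z=x-y$,
$$\Op_\tau(a)\varphi(x)=\frac{1}{(2\pi)^d}\int_{\RR^d}\int_{\RR^d} e^{iz\xi}a(x-\tau z,\xi)\varphi(x-z)\,dz\,d\xi.$$
The two obstructions to absolute convergence are the growth $\sim e^{M(m|\xi|)}$ of $a$ in $\xi$ and the lack of decay in $z$. Both will be removed by regularisation: using $(M.2)$, pick an ultradifferential operator $P(D)$ of class $*$ whose symbol satisfies $1+P(\xi)\gtrsim e^{2M(\lambda|\xi|)}$ for suitable $\lambda$, and use the identity $e^{iz\xi}=(1+P(\xi))^{-1}(1+P(-iD_z))e^{iz\xi}$ to transfer the bad $\xi$-growth of $a$ into additional derivatives falling on $a$ and $\varphi$; independently, apply $(1-\Delta_\xi)^N$ to gain $\langle z\rangle^{-2N}$-decay for any $N$. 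After differentiating in $x$ and inserting the symbol bounds on $a$, the Gelfand--Shilov bounds on $\varphi$, and the structural conditions $(M.1)$, $(M.2)$, and $A_p\subset M_p^\rho$ to sum the resulting series of derivatives, one arrives at a pointwise bound
$$|D^\alpha_x\Op_\tau(a)\varphi(x)|\leq C\,q(a)\,r(\varphi)\,\mu^{|\alpha|}M_\alpha e^{-M(\mu|x|)},$$
for appropriate $\mu>0$, which is precisely the desired bilinear seminorm estimate.

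Passing through the inductive/projective limits defining $\Gamma^{*,\infty}_{A_p,\rho}(\RR^{2d})$ and $\SSS^*(\RR^d)$ in both Beurling and Roumieu cases will yield separate continuity of the bilinear map; since $\Gamma^{*,\infty}_{A_p,\rho}(\RR^{2d})$ is bornological and $\SSS^*(\RR^d)$ is an $(F)$-space in the Beurling case and a $(DFS)$-space in the Roumieu case, separate continuity upgrades to hypocontinuity in a standard way. The extension to $\SSS'^*(\RR^d)$ will then proceed by transposition: one checks the identity ${}^t\Op_\tau(a)=\Op_{1-\tau}(\check a)$ with $\check a(x,\xi)=a(x,-\xi)$ (up to the standard change-of-quantisation formula), so the hypocontinuity of the $(1-\tau)$-quantisation into $\SSS^*(\RR^d)$ dualises to hypocontinuity of $(a,T)\mapsto\Op_\tau(a)T$ on $\Gamma^{*,\infty}_{A_p,\rho}(\RR^{2d})\times\SSS'^*(\RR^d)\to\SSS'^*(\RR^d)$. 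The continuity of $a\mapsto\Op_\tau(a)$ into the two $\mathcal{L}_b$ spaces is then an immediate consequence of the bilinear seminorm estimate, since the seminorms defining these $\mathcal{L}_b$-topologies are precisely $a\mapsto\sup_{\varphi\in B}p(\Op_\tau(a)\varphi)$ for bounded $B$ and defining seminorms $p$.

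The main obstacle will be the seminorm bookkeeping in the central pointwise bound: the parameters $h$, $m$, $\mu$, $\lambda$, the exponent $N$ and the ultradifferential operator $P$ must be chosen in a single coherent way that respects the quantifier alternation distinguishing Beurling (``for every $h$'') from Roumieu (``there exists $h$''), while the condition $A_p\subset M_p^\rho$ must be exploited to absorb the $A_p$-growth arising from derivatives of $a$ into the $M_p$ sequence that governs decay and smoothness of $\SSS^*$-functions. Analogous computations are known in the finite-order Weyl--H\"ormander setting, but here they have to be executed with sharp control on all constants to accommodate the ultrapolynomial growth of the symbols.
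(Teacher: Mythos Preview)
The paper does not supply a proof of this proposition: it is quoted in the Appendix with a bare reference to \cite[Proposition 3.1]{PP1}, so there is no ``paper's own proof'' to compare against here.

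Judged on its own merits, your outline is the standard and correct route (and essentially the one followed in \cite{BojanP,PP1}): regularise the oscillatory integral by an ultradifferential operator $P(D_z)$ of class $*$ to absorb the $e^{M(m|\xi|)}$ growth of the symbol, use $(1-\Delta_\xi)^N$ for the $z$-integrability, and read off a bilinear seminorm inequality from which everything else follows. The transposition identity ${}^t\Op_\tau(a)=\Op_{1-\tau}(\check a)$ with $\check a(x,\xi)=a(x,-\xi)$ is correct and immediately gives the extension to $\SSS'^*(\RR^d)$.

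Two small remarks. First, in your passage from separate continuity to hypocontinuity the operative property is \emph{barrelledness} of both factors (recorded in the paper for $\Gamma^{*,\infty}_{A_p,\rho}(\RR^{2d})$, and automatic for $\SSS^*(\RR^d)$ as an $(F)$- or $(DFS)$-space), not bornologicality; in fact your bilinear estimate already yields, for each bounded set $U\subseteq\Gamma^{*,\infty}_{A_p,\rho}(\RR^{2d})$, that $\{\Op_\tau(a):a\in U\}$ is equicontinuous on $\SSS^*(\RR^d)$, which is hypocontinuity outright, so you can bypass the detour. Second, the ``main obstacle'' you flag is real: in the Roumieu case the operator $P$ and the output index $\mu$ must be chosen \emph{after} fixing the step $h$ of the inductive limit containing $a$ and the step of $\SSS^{\{M_p\}}$ containing $\varphi$; organising the argument as ``bounded set in $\Gamma^{*,\infty}_{A_p,\rho}$ $\Rightarrow$ equicontinuous family'' is the cleanest way to keep the quantifiers straight.
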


As we mentioned before, changing the quantisation always results in operators with symbols in $\Gamma_{A_p,\rho}^{*,\infty}(\RR^{2d})$ modulo $*$-regularising operators (see \cite{PP1,BojanP}).\\
\indent The composition of two Weyl quantisation is again a $\Psi$DO (modulo a $*$-regularising operator) with Weyl symbol ``given'' by their $\#$-product. More precisely

\begin{theorem}\label{weylq}$($\cite[Theorem 4.2]{PP1}$)$
Let $U_1,U_2\subseteq FS_{A_p,\rho}^{*,\infty}(\RR^{2d};B)$ be
such that $U_1\precsim f_1$ and $U_2\precsim f_2$ in
$FS_{A_p,\rho}^{*,\infty}(\RR^{2d};B)$ for some continuous
positive functions $f_1$ and $f_2$ with ultrapolynomial growth of
class $*$. Then:
\begin{itemize}
\item[$i)$] $U_1\#U_2\precsim f_1f_2$ in $FS_{A_p,\rho}^{*,\infty}(\RR^{2d};B)$.
\item[$ii)$] Let $V_k\precsim_{f_k} U_k$, with $\Sigma_k: U_k\rightarrow V_k$ the surjective mapping, $k=1,2$. There exists $R>0$, which can be chosen arbitrarily large, such that
    \beqs
    \left\{\Op_{1/2}\left(\Sigma_1(\ssum a_j)\right)\Op_{1/2}\left(\Sigma_2(\ssum b_j)\right)-\Op_{1/2}\left(R(\ssum a_j\# \ssum b_j)\right)\big|\right.
    \\
    \left.\ssum a_j\in U_1,\, \ssum b_j\in U_2\right\}
    \eeqs
    is an equicontinuous subset of $\mathcal{L}(\SSS'^*(\RR^d),\SSS^*(\RR^d))$ and
    \beq\label{krh1791}
    \left\{R(\ssum a_j\# \ssum b_j)\big|\, \ssum a_j\in U_1,\, \ssum b_j\in U_2\right\}\precsim_{f_1f_2}U_1\#U_2.
    \eeq
\end{itemize}
\end{theorem}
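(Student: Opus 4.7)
The proof splits naturally into the formal estimate (i) and the operator-level statement (ii).

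For (i), I work directly from the defining formula for the coefficients of the sharp product,
\[
c_j(x,\xi)=\sum_{s+k+l=j}\sum_{|\alpha+\beta|=l}\frac{(-1)^{|\beta|}}{\alpha!\beta!\,2^l}\,\partial^{\alpha}_{\xi}D^{\beta}_x a_s\,\partial^{\beta}_{\xi}D^{\alpha}_x b_k,
\]
apply Leibniz's rule to each product when computing $D^{\gamma}_w c_j$, and insert the subordination bounds from $U_k\precsim f_k$. On $Q^c_{Bm_j}\subseteq Q^c_{Bm_s}\cap Q^c_{Bm_k}$ (valid because $m_p$ is non-decreasing and $s,k\le j$), these give, uniformly over the two index families, factors of the form $h^{|\mu|+2s}A_{|\mu|+2s}f_1(w)\langle w\rangle^{-\rho(|\mu|+2s)}$ for the derivatives of $a_s$ and the analogous one for $b_k$. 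Combining the two $A$-factors by iterated use of $(M.2)$ dominates their product by $c_0^2 H^{|\gamma|+2j}A_{|\gamma|+2j}$; the binomial coefficients from Leibniz together with the $\alpha!\beta!\,2^l$ denominators and the polynomial count of multi-indices with $|\alpha+\beta|=l$ are absorbed into a new parameter $h'$. In the Beurling case the arbitrariness of $h$ in each input bound makes $h'$ arbitrarily small; in the Roumieu case one chooses $h$ suitably, using $(M.4)$ to convert residual factorials into $A_p$-sequences. The product $f_1(w)f_2(w)$ and the power $\langle w\rangle^{-\rho(|\gamma|+2j)}$ appear as required, yielding $U_1\#U_2\precsim f_1f_2$ in $FS_{A_p,\rho}^{*,\infty}(\RR^{2d};B)$.

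For (ii), the key ingredient is the exact Weyl composition formula for genuine symbols $a,b\in\Gamma_{A_p,\rho}^{*,\infty}(\RR^{2d})$, namely $\Op_{1/2}(a)\Op_{1/2}(b)=\Op_{1/2}(a\# b)$, where $a\# b$ is defined by an oscillatory integral whose Taylor expansion in the phase around the diagonal produces the first $N$ terms of the formal sharp product plus a remainder of order $N$. Taking $a=\Sigma_1(\sum_j a_j)\in V_1$ and $b=\Sigma_2(\sum_j b_j)\in V_2$, I would unfold this expansion term by term and track the subordination constants from $V_k\precsim_{f_k}U_k$, using $(M.2)$ and $(M.4)$ exactly as in (i). The outcome is that, in the sense of $\sim$, the Weyl symbol $a\#b$ of the composition $\Op_{1/2}(a)\Op_{1/2}(b)$ is equivalent to the formal series $\sum_j a_j\#\sum_j b_j$, with remainder bounds uniform over $(\sum_j a_j,\sum_j b_j)\in U_1\times U_2$.

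To close the argument, I apply Proposition \ref{subordinate} to the subordination $U_1\# U_2\precsim f_1f_2$ from (i): this yields $R_0>B$ such that for every $R\ge R_0$ the canonically cut-off family $\{R(\sum_j a_j\#\sum_j b_j)\}$ lies in $\Gamma_{A_p,\rho}^{*,\infty}(\RR^{2d})$ and satisfies \eqref{krh1791}. The difference between $a\# b$ and $R(\sum_j a_j\#\sum_j b_j)$ then satisfies, for every $n$, a decay of order $\langle w\rangle^{-\rho(|\gamma|+2n)}$ on $Q^c_{Bm_n}$ with constants uniform over $U_1\times U_2$; this is precisely the hypothesis of Proposition \ref{eqsse}, which delivers the required equicontinuity in $\mathcal{L}(\SSS'^*(\RR^d),\SSS^*(\RR^d))$. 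The main obstacle I anticipate is the uniform bookkeeping of Beurling/Roumieu constants through the oscillatory-integral expansion so that the remainder depends on $a,b$ only via the seminorms controlling $U_1,U_2$; Proposition \ref{subordinate}'s freedom to choose $R$ arbitrarily large is what ultimately absorbs this dependence.
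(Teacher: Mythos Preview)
This theorem is not proved in the present paper: it is quoted in the Appendix as \cite[Theorem 4.2]{PP1} and stated without proof. There is therefore no proof here to compare your proposal against; the authors simply import the result from \cite{PP1}.

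That said, your outline is broadly the right shape for how such a result is established. For part~(i) the Leibniz-plus-$(M.2)$ bookkeeping you describe is the standard mechanism. For part~(ii) your strategy---realise $a^w b^w$ via the oscillatory-integral Weyl product, Taylor-expand to produce the formal $\#$-series with a controlled remainder, then invoke Propositions~\ref{subordinate} and~\ref{eqsse}---is the approach taken in \cite{BojanP,PP1}. The point you flag as the ``main obstacle'' (uniform control of the oscillatory remainder in the ultradistributional setting, so that all constants depend only on the subordination data of $U_1,U_2$) is indeed where the genuine work lies, and it is not something that can be waved through: in the infinite-order case the symbols have ultrapolynomial growth, so the oscillatory integral defining $a\# b$ and its remainder estimates require the specific machinery developed in \cite{BojanP} (integration by parts against ultradifferential operators rather than polynomial ones). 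Your sketch does not supply this, but it correctly identifies where it is needed.
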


\begin{corollary}\label{corweylqu}$($\cite[Corollary 4.3]{PP1}$)$
Let $U_1,U_2\subseteq FS_{A_p,\rho}^{*,\infty}(\RR^{2d};B)$ with
$U_1\precsim f_1$ and $U_2\precsim f_2$ for some continuous
positive functions of ultrapolynomial growth of class $*$. For
$\sum_j a_j\in U_1$ and $\sum_j b_j\in U_2$ denote $\sum_j
c_{j,a,b}=\sum_j a_j\#\sum_j b_j\in U_1\# U_2$. Then, there exists
$R>0$, which can be chosen arbitrarily large, such that
\beqs
\left\{a^wb^w-c^w\big|\, a=R(\ssum a_j),\, b=R(\ssum b_j),\,
c=R(\ssum c_{j,a,b})\right\}
\eeqs
is an equicontinuous subset of $\mathcal{L}(\SSS'^*(\RR^d),\SSS^*(\RR^d))$ and \eqref{krh1791} holds.
\end{corollary}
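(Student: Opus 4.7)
The plan is to derive the corollary as a direct specialisation of Theorem \ref{weylq}(ii). First, part $(i)$ of that theorem gives $U_1\#U_2\precsim f_1f_2$ in $FS_{A_p,\rho}^{*,\infty}(\RR^{2d};B)$, so Proposition \ref{subordinate} applies to all three of $U_1$, $U_2$, and $U_1\#U_2$ simultaneously: there is a common threshold $R_0\geq B$ such that, for every $R\geq R_0$, the canonical constructions $V_k:=U_{k,R}$ ($k=1,2$) and $W:=(U_1\#U_2)_R$ are contained in $\Gamma_{A_p,\rho}^{*,\infty}(\RR^{2d})$ and satisfy $V_k\precsim_{f_k}U_k$, $W\precsim_{f_1f_2}U_1\#U_2$, with the natural surjections $\Sigma_k\colon\ssum a_j\mapsto R(\ssum a_j)$ in each case.

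With these $V_1,V_2,\Sigma_1,\Sigma_2$ fixed, Theorem \ref{weylq}$(ii)$ produces a further threshold $R_1$, which can also be chosen arbitrarily large, so that for every $R\geq R_1$ the set
\[\left\{\Op_{1/2}\bigl(\Sigma_1(\ssum a_j)\bigr)\Op_{1/2}\bigl(\Sigma_2(\ssum b_j)\bigr)-\Op_{1/2}\bigl(R(\ssum a_j\#\ssum b_j)\bigr)\right\}\]
is equicontinuous in $\mathcal{L}(\SSS'^*(\RR^d),\SSS^*(\RR^d))$. Since the ``arbitrarily large'' clauses of Proposition \ref{subordinate} and Theorem \ref{weylq}$(ii)$ both really mean ``for every $R$ above a threshold'', I would fix a single $R\geq\max\{R_0,R_1\}$ and use this same $R$ in all three roles appearing in the statement of the corollary, namely to form $a=R(\ssum a_j)$, $b=R(\ssum b_j)$, and $c=R(\ssum c_{j,a,b})$.

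For this $R$ we have $\Op_{1/2}(\Sigma_1(\ssum a_j))=a^w$, $\Op_{1/2}(\Sigma_2(\ssum b_j))=b^w$, and $\Op_{1/2}(R(\ssum a_j\#\ssum b_j))=\Op_{1/2}(R(\ssum c_{j,a,b}))=c^w$; hence the equicontinuous family delivered by Theorem \ref{weylq}$(ii)$ is precisely $\{a^wb^w-c^w\}$, and the subordination $W\precsim_{f_1f_2}U_1\#U_2$ recorded above is exactly \eqref{krh1791}. The only point requiring care is the harmonisation of the two thresholds $R_0$ and $R_1$; once one observes that each of the antecedent results holds uniformly for all sufficiently large $R$, no genuine obstacle arises, and the corollary follows.
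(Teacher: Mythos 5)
Your derivation targets the right structure, but note first that the paper itself offers no proof of this corollary: it is quoted from \cite[Corollary 4.3]{PP1}, so the only question is whether it really follows from the quoted statements of Theorem \ref{weylq} and Proposition \ref{subordinate} by your argument. It does not quite, and the gap sits exactly at the point you dismiss as ``harmonisation''. In Theorem \ref{weylq}$(ii)$ the data $V_1,V_2,\Sigma_1,\Sigma_2$ must be fixed \emph{before} the theorem produces its admissible range of $R$, and that threshold is allowed to depend on them (through the constants in the subordination estimates $V_k\precsim_{f_k}U_k$, which in Proposition \ref{subordinate} may depend on the cut-off parameter used to build $U_{k,R}$). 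In your argument the $V_k$ are themselves $U_{k,R}$ for the very $R$ you are trying to select. If you fix $V_k=U_{k,R'}$ for some $R'\geq R_0$ and then take $R\geq\max\{R_0,R_1\}$, what the theorem controls is $\Op_{1/2}\big(R'(\ssum a_j)\big)\Op_{1/2}\big(R'(\ssum b_j)\big)-\Op_{1/2}\big(R(\ssum c_{j,a,b})\big)$, not $a^wb^w-c^w$ with $a=R(\ssum a_j)$, $b=R(\ssum b_j)$; if instead you insist on $V_k=U_{k,R}$ with the final $R$, then $R_1=R_1(R)$ and ``fix $R\geq\max\{R_0,R_1\}$'' is circular: you would need some $R$ with $R\geq R_1(R)$, and nothing in the quoted statements guarantees such a fixed point. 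Since using one and the same $R$ in all three slots is precisely what the corollary adds to the theorem, this cannot be waved away as bookkeeping of thresholds.

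The gap is fixable with the tools already quoted, but it needs an explicit argument. For instance: fix $R'\geq R_0$ once and for all, apply Theorem \ref{weylq}$(ii)$ to $V_k=U_{k,R'}$ to obtain an arbitrarily large admissible $R$, and then compare $R(\ssum a_j)$ with $R'(\ssum a_j)$. Both families are subordinate to $U_1$ under $f_1$ via Proposition \ref{subordinate}, so the differences $R(\ssum a_j)-R'(\ssum a_j)$ are bounded in some $\Gamma_{A_p,\rho}^{(M_p),\infty}(\RR^{2d};m)$ (resp. $\Gamma_{A_p,\rho}^{\{M_p\},\infty}(\RR^{2d};h)$) and satisfy the estimate of Proposition \ref{eqsse} with every gain $\langle w\rangle^{\rho(|\alpha|+2n)}$, because $f_1$ has ultrapolynomial growth of class $*$; hence $\big\{\big(R(\ssum a_j)\big)^w-\big(R'(\ssum a_j)\big)^w\big\}$ is equicontinuous $*$-regularising, and likewise for $U_2$. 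Writing $a^wb^w-c^w$ as the theorem's difference plus terms of the form (equicontinuous $*$-regularising) composed with quantisations of bounded symbol families (equicontinuous on $\SSS'^*(\RR^d)$, resp. on $\SSS^*(\RR^d)$, by Proposition \ref{continuity}) then yields the statement with a single $R$. Alternatively, one could argue that the constants in Proposition \ref{subordinate} can be taken uniform in $R\geq R_0$, so that the theorem's threshold is independent of which $U_{k,R}$ one feeds in; but that uniformity is a claim about the proofs in \cite{PP1}, not something contained in the statements you are allowed to cite, and it is exactly what your write-up implicitly assumes.
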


\begin{remark}
Corollary \ref{corweylqu} is also applicable when $U_1$ and $U_2$ are bounded subsets of $\Gamma_{A_p,\rho}^{(M_p),\infty}(\RR^{2d};m)$ for some $m>0$ (resp. of $\Gamma_{A_p,\rho}^{\{M_p\},\infty}(\RR^{2d};h)$ for some $h>0$). In this case, the corollary reads: there exists $R>0$, which can be chosen arbitrary large, such that $\{a^wb^w-\Op_{1/2}(R(a\# b))|\, a\in U_1,\, b\in U_2\}$ is equicontinuous $*$-regularising set and $\{R(a\# b)|\, a\in U_1,\, b\in U_2\}$ is bounded in $\Gamma_{A_p,\rho}^{(M_p),\infty}(\RR^{2d};m)$ for some $m>0$ (resp. of $\Gamma_{A_p,\rho}^{\{M_p\},\infty}(\RR^{2d};h)$ for some $h>0$, cf. Lemma \ref{lemulgr117}).
\end{remark}

Hypoelliptic symbols have parametrices and hence they are globally regular; we can explicitly construct (the asymptotic expansions of) the parametrices.

\begin{proposition}\label{parametweyl}$($\cite[Proposition 5.2]{PP1}$)$
Let $a\in\Gamma^{*,\infty}_{A_p,\rho}(\RR^{2d})$ be hypoelliptic. Define $q_0(w)=a(w)^{-1}$ on $Q^c_B$ and inductively, for $j\in\ZZ_+$,
\beqs
q_j(x,\xi)=-q_0(x,\xi)\sum_{s=1}^j\sum_{|\alpha+\beta|=s}\frac{(-1)^{|\beta|}}{\alpha!\beta!2^s}\partial^{\alpha}_{\xi} D^{\beta}_x q_{j-s}(x,\xi) \partial^{\beta}_{\xi} D^{\alpha}_x a(x,\xi),\,\, (x,\xi)\in Q^c_B.
\eeqs
Then, for every $h>0$ there exists $C>0$ (resp. there exist $h,C>0$) such that
\beq\label{estofpara}
\left|D^{\alpha}_w q_j(w)\right|\leq C\frac{h^{|\alpha|+2j}A_{|\alpha|+2j}}{|a(w)|\langle w\rangle^{\rho(|\alpha|+2j)}},\,\, w\in Q^c_B,\,\alpha\in\NN^{2d},\, j\in\NN.
\eeq
If $B\leq 1$, then $(\sum_j q_j)\# a= \mathbf{1}$ in $FS_{A_p,\rho}^{*,\infty}(\RR^{2d};0)$. If $B>1$, one can extend $q_0$ to an element of $\Gamma^{*,\infty}_{A_p,\rho}(\RR^{2d})$ by modifying it on $Q_{B'}\backslash Q^c_B$, for $B'>B$. In this case $\sum_j q_j\in FS_{A_p,\rho}^{*,\infty}(\RR^{2d};B')$, $((\sum_j q_j)\# a)_k=0$ on $Q^c_{B'}$, $\forall k\in\ZZ_+$, and $((\sum_j q_j)\#a)_0-1=q_0a-1$ belongs to $\DD^{(A_p)}(\RR^{2d})$ (resp. $\DD^{\{A_p\}}(\RR^{2d})$).\\
\indent In particular, for $q\sim\sum_j q_j$ there exists $*$-regularising operator $T$ such that $q^wa^w=\mathrm{Id}+T$.
\end{proposition}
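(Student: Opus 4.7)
The plan is to solve the formal equation $(\ssum q_j)\#a=\mathbf{1}$ level-by-level and then upgrade this identity to an operator-level parametrix by invoking Corollary \ref{corweylqu}. Viewing $a$ as the formal sum whose zeroth component is $a$ itself and whose higher components vanish, the $\#$-product formula of Subsection \ref{sub Weyl quantasation} reads
\[
((\ssum q_j)\#a)_j=\sum_{s=0}^{j}\sum_{|\alpha+\beta|=j-s}\frac{(-1)^{|\beta|}}{\alpha!\beta!2^{j-s}} \partial^{\alpha}_{\xi}D^{\beta}_x q_s\cdot \partial^{\beta}_{\xi}D^{\alpha}_x a.
\]
Setting the $j=0$ level equal to $1$ forces $q_0=1/a$ on $Q^c_B$, while requiring the $j\geq 1$ levels to vanish and isolating the contribution $q_j\cdot a$ (which corresponds to $s=j$, $\alpha=\beta=0$) gives, after the reindexing $s\mapsto j-s$, exactly the recursive formula in the statement. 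This purely algebraic derivation makes the identity $(\ssum q_j)\#a=\mathbf{1}$ tautological on $Q^c_B$; the substantive work is in the quantitative bounds and in handling the region where $q_0$ is not initially defined.

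The key step will be to establish (\ref{estofpara}) by strong induction on $j$. For $j=0$, I would differentiate $q_0=1/a$ via Fa\`a di Bruno and combine the hypoellipticity estimate (\ref{dd2}) with the $(M.2)$ property of $A_p$ to obtain the desired bound with the correct dependence on $h$. For the inductive step, I would apply $D^{\gamma}_w$ to the defining recursion and expand by Leibniz across the triple product $q_0\cdot(\partial^{\alpha}_{\xi} D^{\beta}_x q_{j-s})\cdot(\partial^{\beta}_{\xi} D^{\alpha}_x a)$; plugging in the inductive hypothesis on $q_{j-s}$, the base bound for $q_0$, and (\ref{dd2}) applied to $a$, every summand becomes bounded by a constant multiple of $h^{|\gamma|+2j}A_{|\gamma|+2j}/(|a(w)|\langle w\rangle^{\rho(|\gamma|+2j)})$ times a combinatorial factor. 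The delicate part will be summing these bounds over $s$, over $(\alpha,\beta)$ with $|\alpha+\beta|=s$, and over the three Leibniz partitions of $\gamma$: this requires using $(M.2)$ to split $A_{|\gamma|+2j}$ multiplicatively, $(M.4)$ to handle the convolutional dependence in $j$ via log-convexity of $A_j/j!$, and absorbing the resulting polynomial proliferation of terms into a small geometric factor produced by the extra $\langle w\rangle^{-2\rho}$ gained at each inductive level. In the Beurling case the closure is possible for every $h>0$, in the Roumieu case for some fixed $h>0$.

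With the estimate in hand, the formal-series statements follow quickly. When $B\leq 1$ we have $Q^c_{Bm_j}=\RR^{2d}$ for all $j$, so the $q_j$ are globally defined, (\ref{estofpara}) combined with the global lower bound (\ref{dd1}) on $|a|$ places $\ssum q_j$ in $FS_{A_p,\rho}^{*,\infty}(\RR^{2d};0)$, and the equation $(\ssum q_j)\#a=\mathbf{1}$ is built in. When $B>1$, fix $B'>B$ and a cutoff $\psi\in\DD^{(A_p)}(\RR^{2d})$ (resp.\ $\DD^{\{A_p\}}(\RR^{2d})$) with $\psi=1$ on $Q_B$ and $\supp\psi\subseteq Q_{B'}$, and extend $q_0$ globally by a smooth modification on $Q_{B'}\setminus Q^c_B$ using $\psi/a$ off a neighbourhood of the zero set, placing it in $\Gamma^{*,\infty}_{A_p,\rho}(\RR^{2d})$. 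Then $\ssum q_j\in FS_{A_p,\rho}^{*,\infty}(\RR^{2d};B')$, the identities hold verbatim on $Q^c_{B'}$, and the error $q_0 a-1$ is compactly supported in $Q_{B'}$, hence in $\DD^{(A_p)}$ (resp.\ $\DD^{\{A_p\}}$).

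Finally, for the parametrix conclusion, Proposition \ref{subordinate} supplies $R>0$ such that $q:=R(\ssum q_j)\in\Gamma^{*,\infty}_{A_p,\rho}(\RR^{2d})$ with $q\sim\ssum q_j$, and Corollary \ref{corweylqu} (applied to $U_1=\{\ssum q_j\}$ and $U_2=\{a\}$) produces $R'>0$ for which $q^w a^w-\Op_{1/2}(R'(\ssum q_j\#a))$ is $*$-regularising. Since $\ssum q_j\#a$ differs from $\mathbf{1}$ only through a compactly supported symbol in $\DD^{(A_p)}$ (resp.\ $\DD^{\{A_p\}}$), the operator $\Op_{1/2}(R'(\ssum q_j\#a))-\mathrm{Id}$ is itself $*$-regularising, and combining the two we obtain $q^w a^w=\mathrm{Id}+T$ with $T\in\mathcal{L}(\SSS'^*(\RR^d),\SSS^*(\RR^d))$. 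The main obstacle throughout is the inductive estimate in the second paragraph: one must close the induction with the same parameter $h$ while maintaining the precise matching between the derivative order $|\gamma|+2j$, the power $\rho(|\gamma|+2j)$ of $\langle w\rangle^{-1}$, and the factorial $A_{|\gamma|+2j}$, which constrains how the constants produced by $(M.2)$ and $(M.4)$ may be combined.
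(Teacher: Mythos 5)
The paper does not actually prove this proposition---it is imported verbatim from \cite[Proposition 5.2]{PP1}---so your proposal can only be measured against the standard construction, and in outline you follow it faithfully: the recursion is exactly what the level-by-level equation $(\sum_j q_j)\#a=\mathbf{1}$ forces; the case $B>1$ is handled by an ultradifferentiable cutoff so that $q_0a-1$ lands in $\DD^{(A_p)}(\RR^{2d})$ (resp. $\DD^{\{A_p\}}(\RR^{2d})$); and the passage to $q^wa^w=\mathrm{Id}+T$ via Proposition \ref{subordinate} and Corollary \ref{corweylqu}, together with the observation that $R'(\sum_j q_j\#a)$ differs from $1$ by a compactly supported ultradifferentiable symbol whose Weyl quantisation is $*$-regularising, is correct. (Since the statement is for an arbitrary $q\sim\sum_jq_j$, you should also invoke the standard fact from the calculus of \cite{BojanP} that symbols equivalent to $0$ quantise to $*$-regularising operators, so that replacing your particular $q=R(\sum_jq_j)$ by any equivalent symbol only changes $T$.)

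The genuine gap is in the mechanism you propose for closing the inductive estimate \eqref{estofpara}, which you yourself single out as the heart of the proof. For a Leibniz term $D^{\gamma_1}_wq_0\cdot D^{\gamma_2}_w(\partial^{\alpha}_{\xi}D^{\beta}_xq_{j-s})\cdot D^{\gamma_3}_w(\partial^{\beta}_{\xi}D^{\alpha}_xa)$ with $|\alpha+\beta|=s$, the base bound for $q_0$, the inductive hypothesis and \eqref{dd2} produce the weight $\langle w\rangle^{-\rho\left(|\gamma_1|+(|\gamma_2|+s+2(j-s))+(|\gamma_3|+s)\right)}=\langle w\rangle^{-\rho(|\gamma|+2j)}$, i.e. \emph{exactly} the power demanded by the right-hand side of \eqref{estofpara}: there is no ``extra $\langle w\rangle^{-2\rho}$ gained at each inductive level'', and even if there were, on $Q^c_B$ (with $B$ dictated by hypoellipticity, not at your disposal) $\langle w\rangle^{-2\rho}$ is merely bounded, not small. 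Hence the proliferation of terms---the factor $3^{|\gamma|}$ from Leibniz over three factors, the constants of type $c_0H^{|\gamma|+2j}$ coming from $(M.2)$, the sums over $s$ and over $|\alpha+\beta|=s$---cannot be absorbed as you describe. The correct mechanism is a rescaling of the Gevrey parameter: fix the target $h$ and use \eqref{dd2} and the Fa\`a di Bruno bound for $q_0$ with a much smaller auxiliary parameter $h_1$ (legitimate in the Beurling case, where those bounds hold for every parameter; in the Roumieu case one instead accepts a larger final $h$), so that all factors growing at most exponentially in $|\gamma|+2j$ are dominated by $(h/(Ch_1))^{|\gamma|+2j}$ with the same constant at every level, while the weights $1/(\alpha!\beta!2^{s})$, the submultiplicativity $A_pA_q\leq A_{p+q}$ and $(M.4)$ keep the multinomial coefficients and the sum over $s,\alpha,\beta$ uniformly bounded. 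With the induction closed this way (constant independent of $j$, same $h$ throughout in the Beurling case), the remainder of your argument goes through.
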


\begin{remark}\label{kts951307}
A similar construction yields $\tilde{q}\in\Gamma^{*,\infty}_{A_p,\rho}(\RR^{2d})$ such that $a^w\tilde{q}^w-\mathrm{Id}$ is $*$-regularising (see \cite[Subsection 6.2.1]{PP1} for more details). Knowing this, it is easy to prove that we can use the left parametrix $q^w$ as a right one as well, i.e. both $q^wa^w-\mathrm{Id}$ and $a^wq^w-\mathrm{Id}$ are $*$-regularising.
\end{remark}

\begin{remark}\label{ktv957939}
For hypoelliptic $a\in\Gamma^{*,\infty}_{A_p,\rho}(\RR^{2d})$, we can construct a parametrix $q$ out of $\sum_j q_j\in FS_{A_p,\rho}^{*,\infty}(\RR^{2d};B')$ in a specific way. Namely, applying Corollary \ref{corweylqu} to $(\sum_j q_j)\# a$ together with (\ref{estofpara}) and Proposition \ref{subordinate}, we conclude the existence of $R>0$ and a $*$-regularising operator $T$ such that $q^wa^w=\mathrm{Id}+T$, where $q=R(\sum_j q_j)\in \Gamma^{*,\infty}_{A_p,\rho}(\RR^{2d})$ satisfies the following conditions: there exist $B''\geq B'$ and $c'',C''>0$ such that
\beq\label{ktr991509}
c''/|a(w)|\leq |q(w)|\leq C''/|a(w)|,\,\, \forall w\in Q^c_{B''},
\eeq
and for every $h>0$ there exists $C>0$ (resp. there exist $h,C>0$) such that
\beq\label{ktl997133}
\left|D^{\alpha}_w q(w)\right|\leq Ch^{|\alpha|}A_{\alpha}|a(w)|^{-1}\langle w\rangle^{-\rho|\alpha|},\,\, w\in Q^c_{B''},\,\alpha\in\NN^{2d},\, j\in\NN.
\eeq
In particular, $q$ is hypoelliptic. This estimate leads to the following simple observation. Assume that $a$ is hypoelliptic and $|a(w)|\rightarrow \infty$ as $|w|\rightarrow\infty$ and let $q$ be the parametrix for $a$ constructed above. Take $\psi\in \DD^{(A_p)}(\RR^{2d})$ (resp. $\psi\in\DD^{\{A_p\}}(\RR^{2d})$) such that $0\leq \psi\leq 1$, $\psi=1$ on a compact neighbourhood of $Q_{B''}$ and $\psi=0$ on the complement of a slightly larger neighbourhood. Then, for each $n\in\ZZ_+$, the function $b_n(w)=q(w)\psi(w/n)$ is in $\DD^{(A_p)}(\RR^{2d})$ (resp. in $\DD^{\{A_p\}}(\RR^{2d})$) and hence $b_n^w$ is $*$-regularising for each $n\in\ZZ_+$. Employing the fact $|a(w)|\rightarrow \infty$ as $|w|\rightarrow\infty$ together with (\ref{ktl997133}), one easily verifies that $b_n\rightarrow q$ in $\Gamma^0_{\rho}(\RR^{2d})$ and hence $b_n^w\rightarrow q^w$ in $\mathcal{L}_b(L^2(\RR^d),L^2(\RR^d))$ (see \cite[Theorem 1.7.14, p. 58]{NR}). As $b_n^w$, $n\in\ZZ_+$, are compact operators on $L^2(\RR^d)$, so is $q^w$.
\end{remark}

\end{document}